\definecolor{vert}{rgb}{0,0.6,0}
\numberwithin{figure}{section}
\theoremstyle{plain}
\newtheorem{thm}{Theorem}[section]
\newtheorem{lem}[thm]{Lemma}
\newtheorem{cor}[thm]{Corollary}
\newtheorem{prop}[thm]{Proposition}
\theoremstyle{remark}
\newtheorem{rem}{\bf{Remark}}
\numberwithin{equation}{section}
\newcommand{\R}{\mathbb{R}}
\begin{document}

\title[Rate of convergence]
{Rate of convergence for periodic homogenization of convex Hamilton--Jacobi equations in one dimension}

\author[Son N.T. Tu]
{Son N.T. Tu}

\thanks{}

\begin{abstract}
Let $u^\varepsilon$ and $u$ be viscosity solutions of the oscillatory Hamilton-Jacobi equation and its corresponding effective equation. Given bounded, Lipschitz initial data, we present a simple proof to obtain the optimal rate of convergence $\mathcal{O}(\varepsilon)$ of $u^\varepsilon \rightarrow u$ as $\varepsilon \rightarrow 0^+$ for a large class of convex Hamiltonians $H(x,y,p)$ in one dimension. This class includes the Hamiltonians from classical mechanics with separable potential. The proof makes use of optimal control theory and a quantitative version of the ergodic theorem for periodic functions in dimension $n = 1$.
\end{abstract}

\address[S. N.T. Tu]
{
Department of Mathematics, 
University of Wisconsin Madison, 480 Lincoln  Drive, Madison, WI 53706, USA}
\email{thaison@math.wisc.edu}

\keywords{Cell problems; Periodic homogenization; first order Hamilton--Jacobi equations; Rate of convergence; viscosity solutions.}
\subjclass[2010]{
35B40, 
37J50, 
49L25 
}

\maketitle


\section{Introduction}
We first give a brief description of the periodic homogenization theory for Hamilton--Jacobi equations in the framework of viscosity solutions (see \cite{Bardi1997,Le2017,Crandall1992,Lions1982}).
The oscillatory Hamilton--Jacobi equation is given by the following Cauchy problem with parameter $\varepsilon$:
\begin{equation}\label{C_varepsilon}
\begin{cases}
u^\varepsilon_t + H\left(x,\frac{x}{\varepsilon},Du^\varepsilon\right) &= 0 \quad\quad\;\,\text{in}\quad \mathbb{R}\times [0,\infty)\\
\qquad\qquad\quad u^\varepsilon(x,0) &= u_0(x) \;\;\text{on}\quad \mathbb{R},
\end{cases} \tag{C$_\varepsilon$}
\end{equation}
where the initial data $u_0$ is contained in $\mathrm{BUC}(\mathbb{R}^n)$, the set of bounded uniformly continuous functions on $\mathbb{R}^n$. Given a Hamiltonian $H(x,y,p) \in \mathrm{C}(\mathbb{R}^n \times \mathbb{R}^n \times \mathbb{R}^n)$ satisfying some conditions (H1)--(H4) below, define the effective Hamiltonian as follows: For each $(x,p) \in \R^n \times \R^n$, let $\overline{H}(x,p) \in \mathbb{R}$ be the unique constant for which the cell (ergodic) problem 
\begin{equation}\label{cell}
H\big(x,y,p+D_yv(y)\big) = \overline{H}(x,p)\qquad\text{in}\;\mathbb{T}^n \tag{CP}
\end{equation}
has a continuous viscosity solution $v(y) = v(y; x,p)$. 
That such a constant exists and is unique is proven in \cite{Lions1986} and \cite{Evans1989,Evans1992}. It is worth mentioning that in general the solution $v(y;x,p)$ to the cell problem \eqref{cell} is not unique even up to the addition of a constant. The effective Hamilton-Jacobi equation corresponding to \eqref{C_varepsilon} is given by the following Cauchy problem:
\begin{equation}\label{C}
\begin{cases}
u_t + \overline{H}\left(x,Du\right) &= 0 \quad\quad\;\,\text{in}\quad \mathbb{R}\times [0,\infty)\\
\quad\;\;\,\qquad u(x,0) &= u_0(x) \;\;\text{on}\quad \mathbb{R}.
\end{cases}\tag{C}
\end{equation}
Some papers treating the properties of the effective Hamiltonian $\overline{H}$ are \cite{Camilli2008a,Concordel1996,Concordel1997,Luo2016,Qian2017}, and the references given therein.

The theory of periodic homogenization studies the behavior of viscosity solutions $u^\varepsilon \in C(\mathbb{R}^n \times [0,\infty)$ to \eqref{C_varepsilon} as the period of oscillation $\varepsilon$ approaches $0^+$. 
The first results in the theory of periodic homogenization were proved under the following assumptions on the Hamiltonian $H = H(x,y,p) \in \mathrm{C}(\mathbb{R}^n\times\mathbb{T}^n\times \mathbb{R}^n)$:
\begin{itemize}
\item[(H1)] For each $(x,p)\in \mathbb{R}^n\times \mathbb{R}^n$, $y\mapsto H(x,y,p)$ is $\mathbb{Z}^n$-periodic.
\item[(H2)] $p\mapsto H(x,y,p)$ is uniformly coercive in $(x,y)\in \mathbb{R}^n\times\mathbb{T}^n$. That is, 
\begin{equation*}
\lim_{|p|\rightarrow +\infty} \left( \inf_{(x,y)\in \mathbb{R}^n\times\mathbb{T}^n} H(x,y,p)\right) =+\infty.
\end{equation*}
Here $\mathbb{T}^n=\mathbb{R}^n\backslash \mathbb{Z}^n$.
\item[(H3)] $\sup \left\lbrace |H(x,y,p)|: (x,y)\in \mathbb{R}^{2n}, |p|\leq R \right\rbrace < \infty$ for all $R>0$.
\item[(H4)] For each $R>0$, there exists $\omega_R(\cdot)\in \mathrm{C}([0,\infty))$, with $\omega_R(0) = 0$, such that for all $x,y\in \mathbb{R}^n, p,q \in B(0,R)$ then
\begin{equation*}
|H(x,y,p) - H(x,y,q)| \leq \omega_R(|p-q|)
\end{equation*}
where $B(0,R)$ denotes the open ball centered at 0 with radius $R$ in $\mathbb{R}^n$.
\end{itemize}

Under the assumptions (H1)--(H4), the viscosity solutions $u^\varepsilon$ converge to a limit $u$ locally uniformly on $\mathbb{R}^n \times [0,\infty)$, where $u$ is a viscosity solution to the effective equation \eqref{C}. This was first proved by P.-L. Lions, G. Papanicolau and S.R.S. Varadhan \cite{Lions1986} in the case that $H$ is independent of $x$, namely $H(x,y,p) = H(y,p)$. The more general case in which $H = H(x,y,p)$ can depend on $x$ was established later by L. C. Evans \cite{Evans1989,Evans1992}, who developed the perturbed test functions method for studying the homogenization problem in the framework of viscosity solutions.

The rate of convergence of $u^\varepsilon\rightarrow u$ was first studied by I. Capuzzo-Dolcetta and H. Ishii in \cite{Capuzzo-Dolcetta2001} using a PDE approach. They consider the stationary problem
\begin{equation}\label{S_eps}
w^\varepsilon(x) + H\left(x,\frac{x}{\varepsilon},Dw^\varepsilon(x)\right) = 0 \qquad\text{in}\;\mathbb{R}^n. \tag{S$_\varepsilon$}
\end{equation}
As $\varepsilon\rightarrow 0$, $w^\varepsilon\rightarrow w$ locally uniformly on $\mathbb{R}^n$ and $w$ solves the effective equaition
\begin{equation}\label{S}
w(x)+ \overline{H}\left(x,Dw(x)\right) = 0 \qquad\text{in}\;\mathbb{R}^n. \tag{S}
\end{equation}
Under this stationary setting, the authors of \cite{Capuzzo-Dolcetta2001} establish the rate of convergence is \textit{at least} $\mathcal{O}(\varepsilon^{1/3})$ for general (including nonconvex) Lipschitz Hamiltonians under quite general assumptions. In the case that $H(x,y,p) = H(y,p)$, Capuzzo-Dolcetta and Ishii obtain the rate of convergence $\mathcal{O}(\varepsilon)$ of $w^\varepsilon$ to $w$ by a simple comparison argument. Their approach can be easily adjusted to handle the Cauchy problem \eqref{C_varepsilon} giving the same rate $\mathcal{O}(\varepsilon^{1/3})$. This approach is quite robust, and it works for various different situations. Another example occurs in \cite{Marchi2009}, where C. Marchi considers the case where $H$ depends on more scales, and establishes the rate $\mathcal{O}(\varepsilon^{1/3})+\omega(\varepsilon)$ for some modulus of continuity of $H$ using the method of \cite{Capuzzo-Dolcetta2001},

Heuristically, the rate of convergence $\mathcal{O}(\varepsilon)$ seems to be optimal. By using an ansatz $u^\varepsilon = u^0 + \varepsilon u^1 + \varepsilon^2 u^2+\ldots$ and plugging it into \eqref{C_varepsilon}, we can derive the following two--scale asymptotic expansion (see \cite{Le2017, Lions1986, Mitake2018}),
\begin{equation}\label{eqn:heuristic}
u^\varepsilon(x,t) \approx u(x,t) + \varepsilon v\left(\frac{x}{\varepsilon};x, Du(x,t)\right) + \mathcal{O}(\varepsilon^2),
\end{equation}
in which the rate of convergence looks like $\mathcal{O}(\varepsilon)$. However, it is hard to justify \eqref{eqn:heuristic} rigorously as the solution $u(x,t)$ to \eqref{C} is only Lipschitz in $(x,t)$, and is usually not $\mathrm{C}^1$. Also, the solution $v$ to the ergodic problem \eqref{cell} is not unique even up to the addition of a constant (Example 6.1 in \cite{Le2017} or Proposition 5.4 in \cite{Lions1982}).





Recently, H. Mitake, H. V. Tran and Y. Yu established in \cite{Mitake2018}  that the rate $\mathcal{O}(\varepsilon)$ is optimal in the case that the dimension $n=1$ and the Hamiltonian $H$ is convex and independent of $x$. They provide the following example of a family of $u^\varepsilon$'s that converge to $u$ at the strict rate of $O(\varepsilon)$:

\begin{prop}[Proposition 4.3, \cite{Mitake2018}]\label{propop} Let $n=1$ and $H(y,p) = \frac{1}{2}|p|^2 +  V(y)$ where $V\in \mathrm{C}(\mathbb{T})$ with $\max_{\mathbb{T}}V = 0$ and $V\leq -1$ in $\left[-\frac{1}{3},\frac{1}{3}\right]$. Then in this case $u\equiv 0$,  $\Vert u^\varepsilon\Vert_{L^\infty(\mathbb{R}\times [0,\infty))} \leq C\varepsilon$ and $u^\varepsilon(0,1) \geq \frac{1}{6}\varepsilon$ for all $\varepsilon\in (0,1)$.
\end{prop}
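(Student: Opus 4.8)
The plan is to make the three claims --- $u\equiv 0$, $\|u^\varepsilon\|_{L^\infty} \le C\varepsilon$, and $u^\varepsilon(0,1)\ge \tfrac16\varepsilon$ --- by exploiting the optimal control (Lax--Oleinik) representation of the solutions, which is available here since $H(y,p) = \tfrac12|p|^2 + V(y)$ is convex and superlinear in $p$. First, for the effective equation: with $H$ independent of $x$ the effective Hamiltonian is $\overline H(p) = \max\{\tfrac12|p|^2 + \min_{\mathbb T} V,\ c(p)\}$-type object, but the cleanest route is to note that when $V$ attains maximum $0$, the effective Hamiltonian satisfies $\overline H(0) = 0$ (the cell problem $\tfrac12|v'|^2 + V = \overline H(0)$ has a solution precisely when $\overline H(0) = -\min_{\mathbb T}V \ge 0$ is large enough; in fact $\overline H(0) = \max_{\mathbb T}(-V)$ is not right either --- one checks $\overline H(0) = -\min V$ is wrong and the correct value is $\overline H(0) = \max_{\mathbb T} V = 0$ when $\max V = 0$, because $v' = \pm\sqrt{2(\overline H - V)}$ must have zero mean and $\overline H \ge \max V$ suffices). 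With $\overline H(0) = 0$ and initial data $u_0 \equiv 0$, the constant function $u\equiv 0$ solves \eqref{C}; uniqueness of viscosity solutions then forces $u\equiv 0$.

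Next, for the upper bound $\|u^\varepsilon\|_{L^\infty} \le C\varepsilon$: I would use the optimal control formula
\[
u^\varepsilon(x,t) = \inf_{\xi(t) = x}\left\{ u_0(\xi(0)) + \int_0^t L\left(\frac{\xi(s)}{\varepsilon}, \dot\xi(s)\right)\,ds \right\},
\]
where $L(y,q) = \tfrac12|q|^2 - V(y)$ is the Legendre transform of $H$ in $p$. Since $u_0\equiv 0$, taking the constant curve $\xi \equiv x$ gives $u^\varepsilon(x,t) \le -t\,V(x/\varepsilon) \le 0$ (recall $V\le 0$). For the lower bound $u^\varepsilon \ge -C\varepsilon$, I would use the optimal trajectory $\xi^*$ realizing the infimum and split the integral of $-V(\xi^*(s)/\varepsilon)$ into two parts: on the set where $\xi^*$ moves by at least one full period $\varepsilon$, a quantitative ergodic-averaging estimate for the periodic function $V$ along $\xi^*$ shows the time-average of $V(\xi^*/\varepsilon)$ is close to $\int_{\mathbb T} V$ up to an error of order $\varepsilon/(\text{length traveled})$; on the complementary set the trajectory is nearly stationary and the kinetic energy $\tfrac12|\dot\xi^*|^2$ term dominates. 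Balancing these, together with the fact that $\overline H(0) = 0$ implies $\int_{\mathbb T}\!L(y,0)\,dy$-type quantities control the cost from below, yields $u^\varepsilon(x,t)\ge -C\varepsilon$ uniformly. This quantitative ergodic step --- comparing $\tfrac1T\int_0^T V(\gamma(s)/\varepsilon)\,ds$ to $\int_{\mathbb T}V$ for a general Lipschitz curve $\gamma$ in terms of its total variation --- is the technical heart and I expect it to be the main obstacle, since the curve need not be monotone and one must handle turning points carefully; in one dimension, however, decomposing the curve into monotone arcs and changing variables reduces it to the scalar estimate $|\tfrac1\ell\int_a^{a+\ell}V(z/\varepsilon)\,dz - \int_{\mathbb T}V| \le C\varepsilon/\ell$.

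Finally, for the strict lower bound $u^\varepsilon(0,1)\ge \tfrac16\varepsilon$: here I use that $V\le -1$ on $[-\tfrac13,\tfrac13]$. Any admissible curve $\xi^*$ with $\xi^*(1) = 0$ either stays within $\tfrac{\varepsilon}{3}$ of $0$ (so that $\xi^*/\varepsilon \in [-\tfrac13,\tfrac13]$ for all $s$, giving $\int_0^1 L(\xi^*/\varepsilon,\dot\xi^*)\,ds \ge \int_0^1 (-V)\,ds \ge 1$, which is way bigger than $\tfrac16\varepsilon$, contradiction with $u^\varepsilon\le 0$), or it must travel out of the well, in which case one extracts a definite positive contribution. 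More precisely, since $u^\varepsilon(0,1)\le 0$ the optimal curve must spend most of its time outside $[-\tfrac{\varepsilon}{3},\tfrac{\varepsilon}{3}]$, but to return to $0$ at time $1$ it must cross the region $[-\tfrac{\varepsilon}{3},\tfrac{\varepsilon}{3}]$ where $L \ge \tfrac12|\dot\xi^*|^2 + 1$; a careful accounting of the kinetic cost of entering and leaving a $\tfrac{\varepsilon}{3}$-neighborhood, against the $\le 0$ upper bound, pins down $u^\varepsilon(0,1)$ from below by a constant multiple of $\varepsilon$, with the explicit constant $\tfrac16$ coming from optimizing the crossing. I would present the quantitative ergodic lemma for $n=1$ first as a standalone lemma, then deduce all three assertions of the proposition from the control formula as above.
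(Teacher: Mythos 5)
The central difficulty here is that your proof has the sign of $u^\varepsilon$ backwards, and this inverts the entire structure of the argument for the second assertion. With $H(y,p)=\tfrac12|p|^2+V(y)$ the Lagrangian is $L(y,q)=\tfrac12|q|^2-V(y)$, and since $V\le 0$ we have $L\ge 0$; with $u_0\equiv 0$ the control formula therefore gives $u^\varepsilon\ge 0$ \emph{trivially}. Your claim that the constant curve yields ``$u^\varepsilon(x,t)\le -t\,V(x/\varepsilon)\le 0$'' is false: $-tV(x/\varepsilon)\ge 0$, and for generic $x$ it is of size $t\|V\|_{L^\infty}=\mathcal{O}(1)$, not $\mathcal{O}(\varepsilon)$. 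Consequently the direction you treat as the ``technical heart'' (a quantitative ergodic lower bound $u^\varepsilon\ge -C\varepsilon$) is attacking a non-problem, while the genuinely nontrivial direction --- the upper bound $u^\varepsilon\le C\varepsilon$ --- is left without a proof. The correct argument for the upper bound is a construction, not an averaging estimate: pick a zero $y_0$ of $V$ and a point $x^*$ with $V(x^*/\varepsilon)=0$ and $|x^*-x|\le\varepsilon$, let the curve sit at $x^*$ (zero running cost) and move linearly to $x$ over a final window of length $\delta\sim\varepsilon/\sqrt{2\|V\|_{L^\infty}}$; this gives $u^\varepsilon(x,t)\le \varepsilon\sqrt{2\|V\|_{L^\infty}}$ uniformly in $(x,t)$. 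This is exactly what the curve $\gamma_+$ of \eqref{gamma_+} accomplishes in the proof of Proposition \ref{ccc}, and is how Remark \ref{remark4} obtains the explicit constant. (Note also that the paper does not prove Proposition \ref{propop} itself --- it is quoted from Proposition 4.3 of \cite{Mitake2018} --- but the Section 2 machinery contains the relevant estimates.)

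Your first assertion ($\overline H(0)=\max_{\mathbb T}V=0$, hence $u\equiv 0$) reaches the right conclusion, though the discussion is muddled and should simply say: a periodic viscosity solution of $\tfrac12|v'|^2+V=\overline H(0)$ exists iff $\overline H(0)\ge \max V$, and the value $\overline H(0)=\max V$ is attained by gluing the branches $v'=\pm\sqrt{-2V}$. For the third assertion your dichotomy is the right idea but again leans on the false fact $u^\varepsilon\le 0$ and does not need a contradiction at all: for any admissible $\xi$ with $\xi(1)=0$, let $\tau$ be the time spent in $\{|\xi|\le\varepsilon/3\}$, where $-V(\xi/\varepsilon)\ge 1$. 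If $\tau\ge\varepsilon/6$ the potential term alone gives action $\ge\varepsilon/6$. If $\tau<\varepsilon/6$, the final excursion from $\{|\xi|=\varepsilon/3\}$ to $0$ covers distance $\ge\varepsilon/3$ in time $\le\tau<\varepsilon/6$, so by Cauchy--Schwarz the kinetic term is at least $\tfrac12(\varepsilon/3)^2/(\varepsilon/6)=\varepsilon/3$. Either way the action is $\ge\varepsilon/6$, which is the source of the constant $\tfrac16$. As written, however, the proposal does not constitute a proof of the $L^\infty$ bound, which is the substantive part of the proposition.
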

Proposition 1.1 and other important results in higher dimensional spaces are proved in \cite{Mitake2018} using tools from dynamical systems and weak KAM theory. 

Mitake, Tran, and Yu also present in \cite{Mitake2018} an essential obstacle to improving the convergence rate $\mathcal{O}(\varepsilon^{1/3})$ by the method used by Capuzzo-Dolcetta and Ishii in \cite{Capuzzo-Dolcetta2001}. 
More precisely, for each $(x,p)\in \mathbb{R}^n\times \mathbb{R}^n$, instead of using $v(y;x,p)$ directly in \eqref{eqn:heuristic}, the authors of \cite{Capuzzo-Dolcetta2001} use $v^\lambda(y) = v^\lambda(y;x,p)$ as the unique solution to the following discount problem
\begin{equation}\label{discount}
\lambda v^\lambda(y) + H\left(x,y,p+D_yv^\lambda (y)\right) = 0 \qquad\text{in} \;\mathbb{T}^n, \tag{S$_\lambda$}
\end{equation}
and approximate $Du(x,t)$ by $\frac{x-y}{\varepsilon^\beta}$ in \eqref{eqn:heuristic} using the doubling variable method. By optimizing $\lambda$ and $\beta$, $\mathcal{O}(\varepsilon^{1/3})$ is the best convergence rate that can be obtained. In order to improve the convergence rate, it is
necessary to have a nice selection of viscosity solutions $v(\cdot;x,p)$ to the ergodic problem \eqref{cell} with respect to $(x,p)$, so that one can use directly $v(y;x,p)$ instead of $v^\lambda(y;x,p)$ in \eqref{eqn:heuristic}. In the case that $H(x,y,p) = H(y,p)$, assume that
\begin{equation}\label{impr}
\begin{cases}
\text{For each}\;p\in \mathbb{R}^n \;\text{there exists a solution}\;v(\cdot;p)\;\text{of}\;\eqref{cell} \\
\text{such that}\; p\mapsto v(\cdot; p)\;\text{is Lipschitz}.
\end{cases}
\end{equation}
Then, the convergence rate can be improved from $\mathcal{O}(\varepsilon^{1/3})$ to $\mathcal{O}(\varepsilon^{1/2})$, as one needs only introduce one parameter into the doubling variable formulation (see Section 7.2 in \cite{Hung2019}) instead of two parameters as before. However, condition \eqref{impr} is quite restrictive in general and does not always hold (see Section 5 in \cite{Mitake2018}). 

Closely related to the results outlined above for the problem \eqref{C_varepsilon} are the recent developments in the case of the viscous Hamilton{--}Jacobi equations. Let $H = H(x,y,p,X): \mathbb{R}^n\times\mathbb{R}^n\times\mathbb{R}^n\times \mathbb{S}^n \rightarrow \mathbb{R}$ be the Hamiltonian that is $\mathbb{Z}^n$-periodic in the $y$ variable, and $\mathbb{S}^n$ denotes the set of $n\times n$ symmetric matrices. The associated viscous Cauchy problem is
\begin{equation}\label{C_varepsilonstar}
\begin{cases}
u^\varepsilon_t + H\left(x,\frac{x}{\varepsilon},Du^\varepsilon, D^2u^\varepsilon \right) &= 0 \quad\quad\;\,\text{in}\quad \mathbb{R}\times [0,\infty)\\
\qquad\qquad\quad u^\varepsilon(x,0) &= u_0(x) \;\;\text{on}\quad \mathbb{R},
\end{cases} \tag{C$_\varepsilon^*$}
\end{equation}
One can find the effective Hamiltonian $\overline{H}$ with a method similar to that used in the non-viscous case and obtain a solution $u$ to the Cauchy problem associated to $\overline{H}$, such that the solutions $u^\varepsilon$ to \eqref{C_varepsilonstar} converge locally uniformly to $u$ (see \cite{Camilli_2009,Evans1992}). The following analogous results on the rate of convergence of $u^\varepsilon\rightarrow u$ for the viscous Hamilton--Jacobi equation below are important to note:
\begin{itemize}
\item In the stationary setting, F. Camilli and C. Marchi (\cite{Camilli_2009}) show that the rate is $\mathcal{O}(\varepsilon)$ if $H = H(y,p,X)$. It can be upgraded to $\mathcal{O}(\varepsilon^2)$ if $H = H(y,X)$.
\item F. Camilli, C. Annalisa and C. Marchi (\cite{Camilli2016}) show that the rate is $\mathcal{O}(\varepsilon)$ for the vanishing viscosity problem $u^\varepsilon + H\left(\frac{x}{\varepsilon},Du^\varepsilon, \varepsilon D^2u^\varepsilon\right) = 0$ in $\mathbb{R}^n$. 
\item For the Cauchy problem $u_t^\varepsilon + H\left(\frac{x}{\varepsilon},Du^\varepsilon, \varepsilon D^2u^\varepsilon\right) = 0$ in $\mathbb{R}^n\times (0,\infty)$ with initial data $u(x,0) = g(x)$ on $\mathbb{R}^n$, S. Kim and K.-A. Lee (\cite{Kim2017a}) obtain high order rates of convergence for special chosen initial data. 
\end{itemize}
In both situations, viscous and nonviscous, the case when $H$ depends on $x$ is significantly harder. In particular, the methods used in \cite{Camilli2016,Camilli_2009} provide the rate $\mathcal{O}(\varepsilon^\alpha)$ for some $\alpha<1$.


We refer to \cite{Camilli_2009,Camilli2016,Kim2017a} and the references therein for more related results on the viscous case. See also \cite{Armstrong2014,Caffarelli2010,Luo2011,Mitake2014} and the references therein for related results to the rate of convergence of Hamilton--Jacobi equations in stochastic homogenization and other settings.
\medskip



When $H$ is convex and depends on $x$, the situation is more complicated and requires a harder analysis of the dynamics of optimal paths in the optimal control formula. Up to now, the best-known convergence rate in this setting is $\mathcal{O}(\varepsilon^{1/3})$, obtained in \cite{Capuzzo-Dolcetta2001}. The main goal of this paper is to obtain the optimal rate of convergence of $u^\varepsilon \rightarrow u$ in one dimension: more precisely, to obtain an optimal bound for $\Vert u^\varepsilon -u\Vert_{L^\infty([-R,R]\times [0,T])}$ for any given $R,T>0$ as $\varepsilon\rightarrow 0^+$. Our paper is the first work that improves the rate of convergence $u^\varepsilon \rightarrow u$ for \eqref{C_varepsilon} to $\mathcal{O}(\varepsilon)$ in the one-dimensional case, as far as we know. Our method develops the results of \cite{Mitake2018} further and uses deep dynamical properties of optimal paths in the optimal control formula. Higher-dimensional cases will be investigated in future works. We state our main results precisely:


\subsection{Main results}
In this paper, we consider the one dimensional case $n=1$ and the convex Hamiltonian is of the form:
\begin{equation*}
H(x,y,p) = H(p) + V(x,y) \qquad\text{for all}\qquad (x,y,p) \in \mathbb{R}\times\mathbb{T}\times\mathbb{R}.
\end{equation*}

We present a simplified proof for obtaining the rate of convergence using optimal control theory. 
The main theorem is as follows:
\begin{thm}\label{general case} Let $n=1$ and $H(x,y,p) = H(p) + V(x,y)+C_0$ where $C_0$ is a constant, $H\in\mathrm{C}^2(\mathbb{R},[0,\infty))$ is strictly convex with $\min_{p\in \mathbb{R}} H(p) = H(0) = 0$. Define $G_1 = \left(H'|_{[0,\infty)}\right)\circ \left(H|_{[0,\infty)}\right)^{-1}$ and $G_2 = \left(H'|_{(-\infty,0]}\right)\circ \left(H|_{(-\infty,0]}\right)^{-1}$. Assume $\mathrm{(H1)}$--$\mathrm{(H4)}$, $V(x,y)$ is continuously differentiable in $x$ variable for each $y\in \mathbb{T}$, and:
\begin{enumerate}
\item[$\mathrm{(A0)}$]  
\begin{equation}
\limsup_{p\rightarrow 0} \left|\frac{H''(p)}{H'(p)}\sqrt{H(p)}\right| < \infty.
\end{equation}
\item[$\mathrm{(A1)}$] $\max_{\mathbb{R}\times \mathbb{T}} V(x,y) = 0$, there exists $y_0 \in \mathbb{T}$ such that $V(x,y_0) = 0$ for all $x\in \mathbb{R}$.
\end{enumerate}
For each compact interval $I\subset\mathbb{R}$  and $i=1,2$ we have:
\begin{enumerate}
\item[$\mathrm{(A2)}$]
\begin{equation*}
\limsup_{r\rightarrow 0^+} \left\lbrace |V_x(x,y)|.\frac{|G_i'(r-V(x,y))|}{|G_i(r-V(x,y))|}\;: \;(x,y)\in I\times\mathbb{T}\right\rbrace<\infty.
\end{equation*}
\item[$\mathrm{(A3)}$] 
\begin{equation*}
\sup_{(x,y)\in I\times\mathbb{T}}\left|\frac{V_x(x,y)}{G_i\left(|V(x,y)|\right)}\right| <\infty.
\end{equation*}
\item[$\mathrm{(A4)}$] 
\begin{equation*}
\limsup_{r\rightarrow 0^+}\left( \frac{\displaystyle\max_{x\in I} \int_0^1 \frac{dy}{|G_i(r-V(x,y))|}}{\displaystyle\min_{x\in I} \int_0^1 \frac{dy}{|G_i(r-V(x,y))|}}\right) <\infty.
\end{equation*}
\end{enumerate}
If $u_0\in \mathrm{Lip}(\mathbb{R})\cap\mathrm{BUC}(\mathbb{R})$ then for any $R,T>0$ we have
\begin{equation}\label{ge.rate}
\left\Vert u^\varepsilon - u\right\Vert_{L^\infty\left([-R,R]\times [0,T]\right)} \leq C\varepsilon
\end{equation}
where $C$ is a constant depends only on $R,T,\mathrm{Lip}(u_0)$, $H(p)$ and $V(x,y)$.
\end{thm}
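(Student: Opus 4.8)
The plan is to work entirely through the optimal control (Lax--Oleinik) representation and to compare the oscillatory and effective value functions curve by curve, absorbing the discrepancies into a one-dimensional quantitative ergodic estimate. First, by the comparison principle and finite speed of propagation it suffices to prove \eqref{ge.rate} on $[-R,R]\times[0,T]$, and since $u_0$ is Lipschitz the momenta $Du^\varepsilon,Du$ — hence the relevant velocities and the relevant range of the ``energy level'' $c$ introduced below — stay in a fixed compact set depending only on $R,T,\mathrm{Lip}(u_0),H$. Writing $L_0:=H^*$ for the Legendre transform of $H$ and $L(x,y,v):=L_0(v)-V(x,y)-C_0$, and $\overline L(x,v):=\sup_p\big(pv-\overline H(x,p)\big)$, one has
\begin{equation*}
u^\varepsilon(x,t)=\inf_{\xi(t)=x}\Big\{u_0(\xi(0))+\int_0^t L\big(\xi,\tfrac{\xi}{\varepsilon},\dot\xi\big)\,ds\Big\},\qquad u(x,t)=\inf_{\xi(t)=x}\Big\{u_0(\xi(0))+\int_0^t \overline L(\xi,\dot\xi)\,ds\Big\}
\end{equation*}
over Lipschitz curves. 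I would then record the one-dimensional structure: for fixed $x$ the cell problem \eqref{cell} for $H(p)+V(x,y)+C_0$ is solvable by quadrature, giving $\overline H(x,p)=C_0+c$ with $p=\int_0^1 (H|_{[0,\infty)})^{-1}(c-V(x,y))\,dy$ for $c\ge0$, $p>0$ (symmetrically for $p<0$), a flat branch $\overline H\equiv C_0$ between the two critical momenta, and a microscopic extremal running at speed $\dot\xi=G_1(c-V(\xi,\xi/\varepsilon))$ (resp.\ $G_2$). In particular the effective speed at level $c$ equals $\big(\int_0^1 dy/G_i(c-V(x,\cdot))\big)^{-1}$, the ratio occurring in (A4), and it degenerates to $0$ as $c\to0^+$ since $G_i(0)=H'(0)=0$ and $V$ vanishes on $\{y=y_0\}$ by (A1). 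The analytic engine is the elementary one-dimensional quantitative ergodic theorem: if $f\in L^1(\mathbb T)$ has mean $\overline f$ and $F$ is a bounded primitive of $f-\overline f$, then $\big|\int f(\eta/\varepsilon)\,d\eta-\overline f\int d\eta\big|\le C\varepsilon\,\|F\|_\infty$ along a monotone $\eta$.

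\medskip

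\emph{Upper bound} $u^\varepsilon\le u+C\varepsilon$. Fix $(x,t)$ and an optimal, piecewise affine curve $\xi$ for $u(x,t)$. On each affine piece — slope $v$, spatial interval $[a,b]$, duration $\tau$ — I would build an oscillatory comparison curve $\zeta$ by integrating $\dot\zeta=\pm G_i(c-V(\zeta,\zeta/\varepsilon))$ at the level $c$ corresponding to $(x,v)$, together with one small correction ensuring $\zeta$ reaches $b$ exactly at time $\tau$, and then estimate $\int L(\zeta,\zeta/\varepsilon,\dot\zeta)\,ds$ against $\tau\,\overline L(x,v)$ by (i) freezing the slow $x$-dependence of $V$, an $O(\varepsilon)$ error over $[0,T]$ controlled through $V_x$ via (A3) (which in particular keeps $V_x$ bounded on $I\times\mathbb T$), and (ii) the ergodic estimate applied period by period — $O(\varepsilon^2)$ per period over $O(1/\varepsilon)$ periods. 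Concatenating and using the Lipschitz bound on $u_0$ gives the inequality. The delicate case is $v$ small, i.e.\ $c$ near $0$: the effective particle is essentially at rest while $\zeta$ must still creep through the wells, spending most of its time near $\{y=y_0\}$, and one must show its excess action is $O(\varepsilon)$ — here (A0) (the rate $G_i(c)\to0$) and (A2) are used, and the creeping level $c=c(\varepsilon)$ must be chosen carefully.

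\medskip

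\emph{Lower bound} $u^\varepsilon\ge u-C\varepsilon$. Take an optimal curve $\xi^\varepsilon$ for $u^\varepsilon(x,t)$; after establishing that it is monotone on long stretches (reversing direction only through the flat branch of $\overline H$, at controlled cost) I would cut it at the points of $\varepsilon\mathbb Z$ it crosses, producing pieces of spatial length $\varepsilon$, durations $\tau_j$, and average speeds $v_j=\varepsilon/\tau_j$. On each piece, the Fenchel inequality against a genuine corrector $w(\cdot;x_j,p)$ — the boundary terms $\varepsilon[w(j{+}1)-w(j)]=0$ vanishing by periodicity — yields $\int_{\text{piece}} L(x_j,\xi^\varepsilon/\varepsilon,\dot\xi^\varepsilon)\,ds\ge\tau_j\,\overline L(x_j,v_j)$; summing, the piecewise-constant-speed interpolation through the $x_j$ is admissible for $u(x,t)$, whence $u(x,t)-u_0(\xi^\varepsilon(0))\le\sum_j\tau_j\overline L(x_j,v_j)+O(\varepsilon)$, while the freezing error $\sum_j\int_{\text{piece}}(V(x_j,\cdot)-V(\xi^\varepsilon,\cdot))\,ds$ is $\le\|V_x\|_\infty\,\varepsilon\sum_j\tau_j=O(\varepsilon)$. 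The only subtlety is when $\xi^\varepsilon$ lingers near $\{V=0\}$ and moves slowly, so that the $x_j$-crossings become dense in time and both the corrector bounds and the traversal times vary with $j$: keeping the total error at $O(\varepsilon)$ there is where (A4) (the ratio over $x$ of $\int_0^1 dy/G_i(c-V(x,\cdot))$ stays bounded over compact intervals) and (A2)--(A3) enter, together with a careful treatment of the segments where $\xi^\varepsilon$ fails to be monotone.

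\medskip

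The technical heart, and the step I expect to be the main obstacle, is exactly this degenerate regime $c\to0^+$: near the critical energy the microscopic speed tends to $0$ and a microscopic period takes arbitrarily long, so the naive ``$O(\varepsilon^2)$ per period, $O(1/\varepsilon)$ periods'' bookkeeping must be redone. Conditions (A0)--(A4) are precisely the uniform, $x$-insensitive quantitative bounds on this singularity — on the rate $G_i(c)\to0$, on $V_x/G_i(|V|)$ and $V_x\,G_i'/G_i$, and on the $x$-oscillation of $\int_0^1 dy/G_i(c-V(x,\cdot))$ — that make the per-period errors and the freezing errors summable to $O(\varepsilon)$. Carrying this through for a general strictly convex $H$ and a general $V$ satisfying (A1), rather than for the quadratic model of Proposition \ref{propop}, is where the real work lies.
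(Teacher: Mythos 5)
Your overall framework (Lax--Oleinik formulas on both sides, freezing of the slow variable, and a quantitative one-dimensional ergodic estimate) is sound, and the two-sided comparison you set up --- upper bound by inserting oscillatory test curves into the formula for $u^\varepsilon$, lower bound by cutting the optimal curve for $u^\varepsilon$ at crossings of $\varepsilon\mathbb{Z}$ and applying the Fenchel inequality against correctors --- is a genuinely different route from the paper's, which never touches $\overline{L}$ or correctors at all. The paper instead exploits conservation of energy along minimizers of \eqref{ge.u^eps.def}: each minimizer carries a level $r$ with $H(L'(\dot{\eta}_\varepsilon))+V(\varepsilon\eta_\varepsilon,\eta_\varepsilon)=r$, so the infimum splits over $r$. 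For $r\le 0$ the minimizer is confined between two adjacent zeros of $V$, hence within one period of its starting point, because by (A0), (A1) and Lemma \ref{key lemma 2} the map $x\mapsto G_1(-V(\varepsilon x,x))$ is Lipschitz and the travel time to the nearest zero is infinite; for $r\ge r_0$ the action is too large to compete; and for $0<r<r_0$ the minimizer is globally monotone, so all time integrals convert to spatial integrals over an interval of length $O(1)$, to which Lemma \ref{Lemma on average - C1 version} applies directly with constants kept uniform in $r$ by (A2)--(A4). The limit is then identified with the explicit deterministic formula $\min\{u_0(x_0),\inf_r I(r)\}$.

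The genuine gap is that the step you yourself single out as the main obstacle --- the degenerate regime of small energy and velocity --- is described but not carried out, and it is exactly the content of the theorem. In the upper bound you write that ``the creeping level $c=c(\varepsilon)$ must be chosen carefully,'' and in the lower bound you defer ``a careful treatment of the segments where $\xi^\varepsilon$ fails to be monotone''; without these the argument does not close. The per-period bookkeeping you propose ($O(\varepsilon^2)$ per period over $O(1/\varepsilon)$ periods) breaks down near the critical energy, where a single microscopic period takes time that diverges as $c\to 0^+$; and the corrector $w(\cdot;x_j,p)$ for $p$ in the flat piece of $\overline{H}(x_j,\cdot)$ is non-unique with no Lipschitz selection in $(x,p)$ (precisely the obstruction to improving the Capuzzo-Dolcetta--Ishii rate discussed in the introduction), so the uniform-in-$j$ bounds on the freezing errors of the correctors that your lower bound needs are not available. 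The paper's resolution is structural rather than perturbative: conservation of energy yields a clean dichotomy (trapped within one period for $r\le 0$, globally monotone for $r>0$), which removes the non-monotone segments entirely; monotonicity makes the number of periods $O(1/\varepsilon)$ in space no matter how slowly the curve moves; and the blow-up of $1/G_i(r-V)$ as $r\to 0^+$ enters only through the max-over-min ratio in (A4), which is assumed bounded. To complete your route you would have to import these ingredients (or equivalents), at which point you would essentially be reproducing the proofs of Propositions \ref{prop r<0}--\ref{asymp3}.
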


To help the readers better understand the main ideas of the paper, we
consider the classical mechanics Hamiltonian in Theorem \ref{simple case}. Theorem \ref{general case} is an adaptation of these ideas to the general situation with new technical changes to overcome the arising difficulties.

\begin{thm}\label{simple case} Assume $n=1$ and $H(x,y,p) = \frac{1}{2}|p|^2 + V(x,y)$ where $V$ is of the separable form $V(x,y) = a(x)b(y)+C_0$ where $C_0$ is a constant and
\begin{itemize}
\item[(i)] $a(x)\in \mathrm{C}^1(\mathbb{R})$ is bounded with $a(x) > 0$ for all $x\in \mathbb{R}$,
\item[(ii)] $b(y)\in \mathrm{C}(\mathbb{T})$ and $\max_{y\in \mathbb{T}} b(y) = 0$.
\end{itemize}
Assume $u_0\in \mathrm{Lip}(\mathbb{R})\cap\mathrm{BUC}(\mathbb{R})$, then for each $R,T>0$ we have
\begin{equation}\label{rate}
\left\Vert u^\varepsilon - u\right\Vert_{L^\infty\left([-R,R]\times [0,T]  \right)} \leq C\varepsilon
\end{equation}
where $C$ is a constant depends on $R,T,\mathrm{Lip}(u_0)$, $a(x)$ and $\max |b(y)|$.
\end{thm}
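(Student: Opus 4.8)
The plan is to work with the optimal control (Lax--Oleinik) representations of $u^\varepsilon$ and $u$, to make a careful selection of a solution of the cell problem \eqref{cell} which is Lipschitz in $(x,p)$, and to use the one--dimensional quantitative ergodic theorem to control the oscillatory integrals that arise. First, replacing $u^\varepsilon,u$ by $u^\varepsilon+C_0t,\,u+C_0t$ and translating the fast variable, we may assume $C_0=0$ and $b(y_0)=0$ with $y_0=0$, so that $\max_{\R\times\T}V=0$ and $V(x,0)=0$ for all $x$. By coercivity and the Lipschitz bound on $u_0$ there is $M=M(\Lip(u_0),\|a\|_\infty,\|b\|_\infty)$ with $\|Du^\varepsilon(\cdot,t)\|_\infty,\|Du(\cdot,t)\|_\infty\le M$, so only the values of $V$ on the compact strip $K\times\T$ with $K:=[-R-MT,R+MT]$ are relevant and all competitor curves may be taken $M$--Lipschitz. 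For $(x,t)\in[-R,R]\times[0,T]$,
\begin{equation}\label{eq:LO}
u^\varepsilon(x,t)=\inf\Big\{u_0(\xi(0))+\int_0^t\Big(\tfrac12|\dot\xi(s)|^2-V\big(\xi(s),\tfrac{\xi(s)}{\varepsilon}\big)\Big)\,ds:\ \xi(t)=x\Big\},
\end{equation}
and likewise for $u$ with the integrand replaced by $\overline L(\xi(s),\dot\xi(s))$, where $\overline L(x,\cdot)$ is the Legendre transform of $\overline H(x,\cdot)$. It suffices to prove $u\le u^\varepsilon+C\varepsilon$ and $u^\varepsilon\le u+C\varepsilon$ on $[-R,R]\times[0,T]$.

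In this one--dimensional mechanical setting the effective objects are explicit: $\overline H(x,\cdot)$ is even and convex, equals $0$ on the (possibly degenerate) interval $|p|\le\mu(x):=\int_0^1\sqrt{-2V(x,y)}\,dy=\sqrt{2a(x)}\int_0^1\sqrt{-b(y)}\,dy$, and for $|p|>\mu(x)$ is the increasing branch determined by $|p|=\int_0^1\sqrt{2(\overline H(x,p)-V(x,y))}\,dy$; hence $\overline L(x,\cdot)\ge0=\overline L(x,0)$ with $\partial_q\overline L(x,0)=[-\mu(x),\mu(x)]$. For each $(x,p)$ a viscosity solution of \eqref{cell} is given by the primitive
\begin{equation}\label{eq:corr}
v(y;x,p)=\int_0^y\Big(\sigma(s)\sqrt{2\big(\overline H(x,p)-V(x,s)\big)}-p\Big)\,ds,
\end{equation}
with $\sigma\equiv\mathrm{sgn}(p)$ when $|p|>\mu(x)$, while when $|p|\le\mu(x)$ (so $\overline H(x,p)=0$) the sign switches at the unique $q_1=q_1(x,p)\in[0,1)$ making \eqref{eq:corr} $1$--periodic. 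The decisive point is that, for this selection, $(x,p)\mapsto v(\cdot;x,p)$ is Lipschitz uniformly for $x\in K$: although $\partial_pv$ and $\partial_xv$ are singular integrals concentrating near $y=0$ as $\overline H(x,p)\to0^+$, differentiating the defining relation for $\overline H$ produces exact cancellations --- one gets $\partial_pv(y;x,p)=\overline H_p(x,p)\int_0^y\frac{ds}{\sqrt{2(\overline H-V)}}-y$ with $\overline H_p(x,p)\int_0^1\frac{ds}{\sqrt{2(\overline H-V)}}=1$, and $\partial_xv(y;x,p)=\int_0^y\frac{\overline H_x(x,p)-V_x(x,s)}{\sqrt{2(\overline H-V)}}\,ds$ with $\overline H_x(x,p)$ equal to the $\frac1{\sqrt{2(\overline H-V)}}$--weighted average of $V_x(x,\cdot)$, which converges to $V_x(x,0)$; both integrals therefore remain bounded. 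This is precisely what hypotheses (A0)--(A4) enforce in Theorem~\ref{general case}, and for the separable $V$ above they hold automatically, since $a>0$ and $V_x(x,y)=a'(x)b(y)$ vanishes wherever $V(x,y)$ does. So we may \emph{select} a genuine corrector that is Lipschitz in $(x,p)$ --- the property \eqref{impr} (now also in the $x$ variable) needed to improve on the rate $\mathcal{O}(\varepsilon^{1/3})$.

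With this corrector, both bounds follow by a perturbed--test--function argument run along optimal curves. For $u^\varepsilon\le u+C\varepsilon$, let $\gamma$ be optimal for $u(x,t)$, with backward momentum $p(s)=Du(\gamma(s),t-s)$; by Hamilton's equations $\dot\gamma=\overline H_p(\gamma,p)$ and $\dot p=-\overline H_x(\gamma,p)$, so $|p|\le M$ and $|\dot p|\le\sup_{K\times[-M,M]}|\overline H_x|$ --- this is how the control formulation sidesteps the missing bound on $D^2u$. Build a competitor $\xi$ for \eqref{eq:LO} that shadows $\gamma$ macroscopically while $\xi/\varepsilon$ runs along the corrected characteristic $\dot\xi=p+\partial_yv(\xi/\varepsilon;\gamma,p)$ (the frozen pendulum flow at energy $\overline H(\gamma,p)$). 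Using \eqref{cell} to rewrite the oscillatory Lagrangian along $\xi$, the action of $\xi$ equals $\int_0^t\overline L(\gamma,\dot\gamma)\,ds$ plus: (i) boundary terms $\varepsilon\big(v(\xi(t)/\varepsilon;\cdot)-v(\xi(0)/\varepsilon;\cdot)\big)=O(\varepsilon)$ since $v$ is bounded; (ii) a residual $\varepsilon\int_0^t\big(\partial_xv\cdot\dot\gamma+\partial_pv\cdot\dot p\big)\,ds=O(\varepsilon)$, finite precisely because $\partial_xv,\partial_pv$ are bounded and $|\dot\gamma|,|\dot p|$ are bounded; and (iii) oscillatory averages of the $1$--periodic maps $y\mapsto\sqrt{2(c-V(x,y))}$ which, by the $n=1$ quantitative ergodic theorem, differ from their means by $O(\varepsilon)$ once one counts the number of full fast periods traversed. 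The reverse bound $u\le u^\varepsilon+C\varepsilon$ is dual and easier: take an optimal $\xi$ for \eqref{eq:LO}, use convexity (Jensen) together with the same ergodic estimate to bound its oscillatory action below by $\int_0^t\overline L(\xi,\dot\xi)\,ds-O(\varepsilon)$, and insert $\xi$ into the variational formula for $u$. Combining the two yields \eqref{rate}.

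The genuinely hard step is the middle one: producing solutions of \eqref{cell} that are Lipschitz in $(x,p)$ \emph{uniformly up to the degenerate set} $\{|p|=\mu(x)\}$, where $\overline H(x,p)=0$, where $\sqrt{2(\overline H(x,p)-V(x,y))}$ vanishes at $y=0$, where the corrector is genuinely non--unique, and where the naive estimates on $\partial_xv$, $\partial_pv$ and on the fast periods all diverge. All the analytic content lies in showing that the above cancellations (and the matching bounds supplied in general by (A0)--(A4)) persist as $\overline H(x,p)\to0^+$; once this is in place the perturbed--test--function and ergodic--theorem computations are essentially routine.
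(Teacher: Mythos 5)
Your route is genuinely different from the paper's: the paper never constructs correctors at all. It works directly with the oscillatory control problem \eqref{u^epsilon-def}, classifies minimizers by their conserved energy $r$ from \eqref{H=r}, shows that paths with $r\le 0$ are trapped between consecutive zeros of $b$ (Proposition \ref{ccc}, via Lemma \ref{key lemma}), that $r\ge r_0$ can be discarded (Proposition \ref{cccx}), and that for $0<r<r_0$ the paths are strictly monotone, so that after the change of variables \eqref{pa.ch.var} the action $A^\varepsilon[\eta_\varepsilon]$ is within $O(\varepsilon)$ of an $\varepsilon$-independent quantity $I(r)$ by the quantitative ergodic Lemma \ref{Lemma on average - C1 version}. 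The limit $u$ is then identified a posteriori with $\min\{u_0(x_0),\inf_r I(r)\}$. Your plan instead selects a corrector $v(\cdot;x,p)$ for \eqref{cell}, claims it is Lipschitz in $(x,p)$ uniformly up to the degenerate set $\{\overline H(x,p)=0\}$, and runs a perturbed-test-function/shadowing argument along optimal trajectories of the effective system.

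As written, the proposal has a genuine gap precisely at its load-bearing step, and you acknowledge as much in your final paragraph. First, the uniform Lipschitz selection of correctors in $(x,p)$ across the flat set $|p|\le\mu(x)$ is exactly the condition \eqref{impr} (strengthened to include $x$-dependence) that the introduction identifies as restrictive and as failing in general for mechanical Hamiltonians (Section 5 of \cite{Mitake2018}); moreover, even when \eqref{impr} holds, the known consequence of such a selection is only an $O(\varepsilon^{1/2})$ rate, so the claim that the remaining computations "routinely" give $O(\varepsilon)$ needs an actual argument, not an assertion. The "exact cancellations" as $\overline H(x,p)\to 0^+$ — including the behavior of the switching point $q_1(x,p)$ inside the flat part, where $\partial_p q_1$ blows up like $(\!-V(x,q_1))^{-1/2}$ — are the entire analytic content and are deferred. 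Second, the dynamical setup is not justified: $\overline H(x,\cdot)$ is flat on $[-\mu(x),\mu(x)]$ and need not be $\mathrm C^1$ at $|p|=\mu(x)$, and $u$ is only Lipschitz, so the relations $p(s)=Du(\gamma(s),t-s)$, $\dot\gamma=\overline H_p$, $\dot p=-\overline H_x$ for an optimal $\gamma$ of the effective problem do not hold classically, and the bound on $|\dot p|$ has no proof. Third, for the reverse inequality you take an optimal $\xi$ for \eqref{eq:LO} and invoke the ergodic estimate, but Lemma \ref{Lemma on average - C1 version} is a statement about integrals in the space variable; applying it requires $\xi$ to be monotone so that time integrals convert to space integrals, and ruling out oscillation of $\xi$ is precisely what the paper's energy classification accomplishes (monotonicity for $r>0$ via \eqref{p.1st.case}--\eqref{p.2nd.case}, trapping for $r\le 0$ via Lemma \ref{key lemma}); your sketch supplies no substitute for this step.
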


\begin{rem} In Theorem \ref{simple case} if $V(x,y) = V(y)$ does not depend on $x$, then we can choose $C$ explicitly as $C = 2\Vert u_0'\Vert_{L^\infty(\mathbb{R})}+ 8\Vert V\Vert_{L^\infty}^{1/2}$. As a consequence, the convergence is uniform in the sense that $\Vert u^\varepsilon  - u\Vert_{L^\infty(\mathbb{R}\times [0,\infty))} \leq C\varepsilon$ (Section 2 and Remark \ref{remark4}).
\end{rem}

\begin{rem} Let us give some quick comments on the assumptions of Theorem \ref{general case}. 
\begin{itemize}
\item[(i)] The assumptions (A2)--(A4) are technical assumptions that are needed for the arguments to work. These assumptions are natural in the sense that they are satisfied by a large class of interesting Hamiltonians (cf. Corollary \ref{corollary simplified}).
\item[(ii)] Assumption (A1) plays a key role in establishing the result. Roughly speaking, the rate of convergence of $u^\varepsilon$ to $u$ is related to the asymptotic behavior of its corresponding minimizer path via an optimal control formulation as in  \eqref{ge.u^eps.def}. Any minimizer path conserves the total energy as in \eqref{ge.H=r}. Assumption (A1) implies that any minimizer with negative total energy is uniformly bounded independent of $\varepsilon>0$. 
\item[(iii)] Condition (A0) is satisfied for a vast class of strictly convex $\mathrm{C}^2$ Hamiltonians, including those with $H''(0) > 0$, $H\in \mathrm{C}^3$, or $|p|^\gamma$ with $\gamma \geq 2$ (Lemma \ref{key lemma}).
\end{itemize}
\end{rem}

\begin{rem} If $V(x,y) = V(y)$ does not depend on $x$, then assumptions (A1)--(A4) automatically hold, while (A0) is satisfied after approximating $H$ with uniformly convex Hamiltonians. Indeed, the method can be used to get the result for general convex Hamiltonians. We thus recover Theorem 1.3 in \cite{Mitake2018} and the convergence is uniform in this case. By Proposition \ref{propop}, the rate $\mathcal{O}(\varepsilon)$ is optimal.
\end{rem}
The following corollary gives some nice examples in which (A1)--(A4) hold, and Theorem \ref{general case} applies.
\begin{cor}\label{corollary simplified} If $H(x,y,p) = H(p) + V(x,y)$ where $H(p)\geq H(0) = 0$ such that:
\begin{itemize}
\item $H(p)\in \mathrm{C}^2(\mathbb{R})$ is strictly convex with $H''(0)>0$, or $H(p) = |p|^\gamma$ where $\gamma\geq 2$.
\item $\max_{\mathbb{R}\times \mathbb{T}} V(x,y) = 0$, there exists $y_0 \in \mathbb{T}$ such that $V(x,y_0) = 0$ for all $x\in \mathbb{R}$.
\item For every compact interval $I\subset\mathbb{R}$ then $\alpha_I f_I(y) \leq |V(x,y)|\leq \beta_I f_I(y)$ for $\alpha_I,\beta_I>0, f_I\in \mathrm{C}(\mathbb{R},[0,\infty))$ and
\begin{equation}\label{rmk.3.2}
\sup_{(x,y)\in I\times\mathbb{T}}\left|\frac{V_x(x,y)}{V(x,y)}\right| \leq C_I< \infty.
\end{equation}
\end{itemize}
If $u_0\in \mathrm{Lip}(\mathbb{R})\cap\mathrm{BUC}(\mathbb{R})$ then for any $R,T>0$ we have
\begin{equation*}
\left\Vert u^\varepsilon - u\right\Vert_{L^\infty\left([-R,R]\times [0,T]\right)} \leq C\varepsilon
\end{equation*}
where $C$ is a constant depends only on $R,T,\mathrm{Lip}(u_0)$, $H(p)$ and $V(x,y)$.
\end{cor}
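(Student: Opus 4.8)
The plan is to verify that the hypotheses of Corollary \ref{corollary simplified}, together with the standing assumptions $\mathrm{(H1)}$--$\mathrm{(H4)}$, imply the assumptions $\mathrm{(A0)}$--$\mathrm{(A4)}$ of Theorem \ref{general case}, and then to apply that theorem. Assumption $\mathrm{(A1)}$ is, verbatim, one of the hypotheses, so nothing is required there. For $\mathrm{(A0)}$ there are two cases. If $H''(0)>0$ then as $p\to 0$ we have $H(p)=\tfrac12 H''(0)p^2+o(p^2)$, $H'(p)=H''(0)p+o(p)$ and $H''(p)\to H''(0)$, so $\left|H''(p)/H'(p)\right|\sqrt{H(p)}\to\sqrt{H''(0)/2}<\infty$; if $H(p)=|p|^\gamma$ with $\gamma\ge 2$, the expression equals $(\gamma-1)|p|^{\gamma/2-1}$, which stays bounded as $p\to 0$ precisely because $\gamma\ge 2$. (This is the content of Lemma \ref{key lemma}.)

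The technical heart is a short lemma describing $G_1$ and $G_2$ near $0$. Writing $s=H(p)$ with $p\ge 0$ (for $G_1$) or $p\le 0$ (for $G_2$), one has $|G_i(s)|=|H'(p)|$ and $|G_i'(s)|/|G_i(s)|=H''(p)/H'(p)^2$. In either admissible case for $H$ one checks that there exist an exponent $\theta_i\in(0,1)$ and constants such that, for $s>0$ near $0$,
\[
c_i\,s^{\theta_i}\ \le\ |G_i(s)|\ \le\ C_i\,s^{\theta_i},\qquad \frac{|G_i'(s)|}{|G_i(s)|}\ \le\ \frac{C_i}{s},
\]
and moreover $|G_i|$ is continuous, strictly increasing and strictly positive on $(0,\infty)$. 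Consequently, on any bounded interval $(0,M]$ the two displayed bounds persist after adjusting constants, and $|G_i|$ is \emph{doubling} there, i.e. $|G_i(\lambda s)|\le K(\lambda,M)\,|G_i(s)|$ for all $s\in(0,M]$ and $\lambda\ge 1$. I would isolate these properties as a lemma, since they power the rest of the argument.

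Granting the lemma, the remaining verifications are elementary. Fix a compact interval $I$, put $M=\max_{I\times\mathbb{T}}|V|<\infty$, and note that $V\le 0$ forces $r-V(x,y)=r+|V(x,y)|\in(0,\,M+1]$ for $0<r\le 1$. Since \eqref{rmk.3.2} gives $|V_x(x,y)|\le C_I\,|V(x,y)|$ on $I\times\mathbb{T}$, we obtain
\[
|V_x(x,y)|\,\frac{|G_i'(r-V(x,y))|}{|G_i(r-V(x,y))|}\ \le\ C_I\,|V(x,y)|\cdot\frac{C_i}{r+|V(x,y)|}\ \le\ C_I C_i
\]
uniformly in $(x,y)\in I\times\mathbb{T}$ and $r\in(0,1]$, which is $\mathrm{(A2)}$, and
\[
\left|\frac{V_x(x,y)}{G_i(|V(x,y)|)}\right|\ \le\ C_I\,\frac{|V(x,y)|}{|G_i(|V(x,y)|)|}\ \le\ C_I C_i\,|V(x,y)|^{\,1-\theta_i},
\]
which is bounded on $I\times\mathbb{T}$ since $\theta_i<1$ and $|V|\le M$; this is $\mathrm{(A3)}$ (at points where $V=0$ the ratio is read as its continuous extension, which is $0$). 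Finally, for $\mathrm{(A4)}$, the sandwich $\alpha_I f_I(y)\le|V(x,y)|\le\beta_I f_I(y)$ and the monotonicity of $|G_i|$ give
\[
\max_{x\in I}\int_0^1\frac{dy}{|G_i(r-V(x,y))|}\ \le\ \int_0^1\frac{dy}{|G_i(r+\alpha_I f_I(y))|},\qquad \min_{x\in I}\int_0^1\frac{dy}{|G_i(r-V(x,y))|}\ \ge\ \int_0^1\frac{dy}{|G_i(r+\beta_I f_I(y))|}.
\]
Since $r+\beta_I f_I(y)\le (\beta_I/\alpha_I)\,(r+\alpha_I f_I(y))$ and both arguments stay in a fixed bounded interval for $r\in(0,1]$, the doubling property yields $|G_i(r+\beta_I f_I(y))|\le K(\beta_I/\alpha_I)\,|G_i(r+\alpha_I f_I(y))|$, so the ratio of the two integrals is at most $K(\beta_I/\alpha_I)$, independent of $r$; this gives $\mathrm{(A4)}$. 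With $\mathrm{(A0)}$--$\mathrm{(A4)}$ in hand, Theorem \ref{general case} applies and yields the claimed bound.

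The only genuinely delicate step I anticipate is the lemma on $G_1,G_2$: proving the two-sided power bounds, the estimate on $|G_i'|/|G_i|$, and especially the doubling inequality on bounded intervals, uniformly over both admissible families of Hamiltonians. Everything downstream is the routine bookkeeping indicated above.
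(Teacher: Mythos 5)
Your proposal is correct and follows essentially the same route as the paper: reduce the corollary to checking (A0)--(A4) and invoke Theorem \ref{general case}, with the verification resting on power-law behavior of $G_i$ near $0$ ($|G_i(s)|\asymp s^{1/2}$ when $H''(0)>0$, which is exactly Lemma \ref{key lemma 2}(ii), and $|G_i(s)|=\gamma s^{1-1/\gamma}$ for $H(p)=|p|^\gamma$) combined with \eqref{rmk.3.2} and the sandwich $\alpha_I f_I\le |V|\le \beta_I f_I$. The only cosmetic difference is that you package the two admissible cases into a single lemma with exponent $\theta_i$ and a doubling property, whereas the paper treats $|p|^\gamma$ by explicit homogeneity and the $H''(0)>0$ case separately; the content is the same and your unified bounds $|G_i'(s)|/|G_i(s)|\le C/s$ and $c\,s^{\theta_i}\le|G_i(s)|\le C\,s^{\theta_i}$ do hold in both cases.
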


\subsection{Organization of the paper} The paper is organized as follows. Section 2 is devoted to the proof of Theorem \ref{simple case}. In section 3 we only sketch the proof of Theorem \ref{general case} with highlights on major technical difficulties, and where assumptions (A0)--(A4) appear since the ideas are similar to the classical mechanics' case. We provide proofs of some lemmas in the Appendix for the reader's convenience.


\section{Classical Mechanics Hamiltonian setting}

\begin{proof}[Proof of Theorem \ref{simple case}] We observe that the estimate \eqref{rate} does not depend on the smoothness of $b(\cdot)$, by approximation, without loss of generality we can assume that $V\in \mathrm{C}^2(\mathbb{R}\times\mathbb{T})$. Also, by replacing $u$ by $u+C$ we can normalize that $C_0 =0$. Let us fix $R,T>0$, $\varepsilon\in (0,1)$ and $(x_0,t_0)\in [-R,R]\times [0,T]$, thanks to the optimal control formula (see \cite{Bardi1997,Le2017}) we have
\begin{align}\label{u^epsilon-def}
u^\varepsilon(x_0,t_0)= \inf_{\eta\,\in \mathcal{T}} \left\lbrace \varepsilon\int_0^{\varepsilon^{-1}t_0} \left(\frac{|\dot{\eta}(s)|^2}{2} - V\left(\varepsilon \eta(s),\eta(s)\right)\right)ds + u_0\left(\varepsilon\eta(\varepsilon^{-1}t_0)\right)\right\rbrace,
\end{align}
where $\mathcal{T} =\big\lbrace \eta(\cdot)\in \mathrm{AC}\left(\left[0,\varepsilon^{-1}t_0\right]\right), \varepsilon\eta(0) = x_0\big\rbrace$. Here $\mathrm{AC}([a,b])$ denotes the set of absolutely continuous functions from $[a,b]$ to $\mathbb{R}$. Let $\eta_\varepsilon(\cdot)\in \mathcal{T}$ be a minimizer to the optimization problem \eqref{u^epsilon-def}, it is clear that $\eta_\varepsilon(\cdot)$ must satisfy the following Euler-Lagrange equation
\begin{equation}\label{E-L}
\begin{cases}
\ddot{\eta}_\varepsilon(s) &= -\nabla  V\big(\varepsilon\eta_\varepsilon(s),\eta_\varepsilon(s)\big)\cdot(\varepsilon,1)\qquad\text{on}\qquad \left(0,\varepsilon^{-1}t_0\right), \\
\eta_\varepsilon(0) &= \varepsilon^{-1}x_0.
\end{cases}
\end{equation}
Here $
\nabla V$ means the full gradient of $V$. In particular, this implies the following conservation of energy:
\begin{equation*}
\frac{d}{ds}\left(\frac{|\dot{\eta}_\varepsilon(s)|^2}{2} + V\left(\varepsilon\eta_\varepsilon(s),\eta_\varepsilon(s)\right)\right) = \dot{\eta}_\varepsilon(s)\Big(\ddot{\eta}_\varepsilon(s) +\nabla V\left(\varepsilon \eta_\varepsilon(s),\eta_\varepsilon(s)\right)\cdot (\varepsilon, 1)\Big) = 0
\end{equation*}
for all $s\in \left(0,\varepsilon^{-1}t_0\right)$. There exists a constant $r = r(\eta_\varepsilon) \in \left[V(0,0),+\infty\right)$ such that
\begin{equation}\label{H=r}
\frac{|\dot{\eta}_\varepsilon(s)|^2}{2} +  V\left(\varepsilon \eta_\varepsilon(s), \eta_\varepsilon(s)\right) = r \qquad\text{for all}\qquad s\in(0,\varepsilon^{-1}t_0).
\end{equation}
For each $r\in \left[V(x_0,\varepsilon^{-1}x_0),\infty\right)$ the Euler-Lagrange equation \eqref{E-L} is
\begin{equation}\label{full-E-L}
\begin{cases}
\;\ddot{\eta}_\varepsilon(s)\; = -\nabla V\big(\varepsilon\eta_\varepsilon(s),\eta_\varepsilon(s)\big)\cdot(\varepsilon,1)\qquad\text{on}\qquad \left(0,\varepsilon^{-1}t_0\right), \\
|\dot{\eta}_\varepsilon(0)| = \sqrt{2(r-V(x_0,\varepsilon^{-1}x_0))},\\
\;\eta_\varepsilon(0)\; =  \varepsilon^{-1}x_0.
\end{cases}
\end{equation}
For simplicity, let us define the action functional
\begin{equation*}
A^\varepsilon[\eta] = \varepsilon\int_0^{\varepsilon^{-1}t_0} \left(\frac{|\dot{\eta}(s)|^2}{2} - V\left(\varepsilon \eta(s),\eta(s)\right)\right)ds + u_0\left(\varepsilon\eta\left(\varepsilon^{-1}t_0\right)\right)
\end{equation*}
for $\eta(\cdot)\in \mathcal{T}$. Thanks to the conservation of energy \eqref{H=r}, the optimization problem \eqref{u^epsilon-def} is equivalent to
\begin{equation}\label{u^e.def.e:1}
u^\varepsilon(x_0,t_0) = \inf_{r} \Big\lbrace A^\varepsilon[\eta_\varepsilon]:\text{among all}\; \eta_\varepsilon(\cdot)\;\text{solve}\;\eqref{E-L}\;\text{with energy}\; r\; \Big\rbrace.
\end{equation}
We proceed to get different estimates for $r\leq 0$ and $r>0$. For simplicity, let us introduce the following notation. For $I$ be an interval of $\mathbb{R}$, we define $\inf_{r\in I} A^\varepsilon[\eta_\varepsilon]$ which means the infimum over all solutions $\eta_\varepsilon(\cdot)$ that solve \eqref{E-L} and with all energies $r\in I$.

\begin{prop}\label{ccc} When $r\leq 0$, we have the following estimate:
\begin{equation}\label{neg.r.fi}
\left|\inf_{r\leq 0} A^\varepsilon[\eta_\varepsilon] - u_0(x_0)\right| \leq \left(\sqrt{2\Vert V\Vert_{L^\infty}} + \Vert u_0'\Vert_{L^\infty}\right)\varepsilon.
\end{equation}
\end{prop}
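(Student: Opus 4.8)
The plan is to rescale time to the macroscopic scale and then estimate $\inf_{r\le 0}A^\varepsilon[\eta_\varepsilon]$ from below by a trapping argument and from above by exhibiting a single competitor, the zero‑energy trajectory. For $\eta\in\mathcal{T}$ I set $\xi(\tau):=\varepsilon\,\eta(\varepsilon^{-1}\tau)$ for $\tau\in[0,t_0]$; then $\xi(0)=x_0$, the action becomes $A^\varepsilon[\eta]=\int_0^{t_0}\big(\tfrac12|\dot\xi|^2-V(\xi,\xi/\varepsilon)\big)\,d\tau+u_0(\xi(t_0))$, equation \eqref{E-L} becomes the one‑dimensional Newton law $\ddot\xi=-W_\varepsilon'(\xi)$ for the potential $W_\varepsilon(\xi):=V(\xi,\xi/\varepsilon)$, and \eqref{H=r} reads $\tfrac12|\dot\xi|^2+W_\varepsilon(\xi)\equiv r$. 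Because $C_0=0$, $V=ab$ with $a>0$ and $\max_{\mathbb{T}}b=0$, we have $W_\varepsilon\le0$ on $\mathbb{R}$; fixing $y_0$ with $b(y_0)=0$, the zero set $Z_\varepsilon:=\{W_\varepsilon=0\}$ contains the $\varepsilon$‑spaced grid $\varepsilon(y_0+\mathbb{Z})$, and every $z\in Z_\varepsilon$ is a global maximum of $W_\varepsilon$, so $W_\varepsilon'(z)=0$. I use throughout that $V\in\mathrm{C}^2$, as arranged at the start of the proof, so $W_\varepsilon'$ is locally Lipschitz.

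For the lower bound I would argue that every solution $\xi$ of $\ddot\xi=-W_\varepsilon'(\xi)$ with energy $r\le0$ remains in the connected component $(p_-,p_+)$ of $\mathbb{R}\setminus Z_\varepsilon$ containing $x_0$, an interval of length $\le\varepsilon$ because it contains no grid point. Indeed, were $\xi$ to first reach an endpoint $z\in Z_\varepsilon$ at time $\tau_z$, then $\tfrac12|\dot\xi(\tau_z)|^2=r-W_\varepsilon(z)=r\le0$ would force $r=0$ and $\dot\xi(\tau_z)=0$, and then $\xi\equiv z$ by uniqueness for the Newton equation through $(z,0)$, contradicting $\xi(0)=x_0\neq z$; the case $x_0\in Z_\varepsilon$ is handled the same way and gives $\xi\equiv x_0$. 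Hence $|\xi(t_0)-x_0|\le\varepsilon$, and since $\tfrac12|\dot\xi|^2-W_\varepsilon(\xi)=|\dot\xi|^2-r$ and $r\le0$,
\[
A^\varepsilon[\eta_\varepsilon]=\int_0^{t_0}|\dot\xi|^2\,d\tau-rt_0+u_0(\xi(t_0))\ \ge\ u_0(\xi(t_0))\ \ge\ u_0(x_0)-\Vert u_0'\Vert_{L^\infty}\,\varepsilon,
\]
so $\inf_{r\le0}A^\varepsilon[\eta_\varepsilon]\ge u_0(x_0)-\Vert u_0'\Vert_{L^\infty}\,\varepsilon$.

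For the upper bound I would single out the zero‑energy trajectory. Let $x_0^*\in Z_\varepsilon$ be a zero of $W_\varepsilon$ nearest to $x_0$ on one chosen side, so $|x_0^*-x_0|<\varepsilon$, and let $\xi_0$ solve the Newton equation with $\xi_0(0)=x_0$ and $\dot\xi_0(0)=\pm\sqrt{2|W_\varepsilon(x_0)|}$ directed toward $x_0^*$; when $W_\varepsilon(x_0)=0$ this degenerates to $\xi_0\equiv x_0$ with $A^\varepsilon[\xi_0]=u_0(x_0)$, so assume $W_\varepsilon(x_0)<0$. This $\xi_0$ has energy $r=0$, hence is admissible in $\inf_{r\le0}A^\varepsilon[\eta_\varepsilon]$, and by the same uniqueness argument it never reaches $x_0^*$; since $\tfrac12|\dot\xi_0|^2=|W_\varepsilon(\xi_0)|>0$ on $(x_0,x_0^*)$, $\xi_0$ is strictly monotone and its image $J:=\xi_0([0,t_0])$ is an interval with endpoint $x_0$ and $|J|<\varepsilon$. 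On $\xi_0$ we have $\tfrac12|\dot\xi_0|^2-V(\xi_0,\xi_0/\varepsilon)=2|W_\varepsilon(\xi_0)|$, so the substitution $d\tau=d\xi/\sqrt{2|W_\varepsilon(\xi)|}$ turns the running cost into
\[
A^\varepsilon[\xi_0]=2\int_0^{t_0}|W_\varepsilon(\xi_0(\tau))|\,d\tau+u_0(\xi_0(t_0))=\sqrt{2}\int_J\sqrt{|W_\varepsilon(\xi)|}\,d\xi+u_0(\xi_0(t_0)).
\]
Bounding $|W_\varepsilon|\le\Vert V\Vert_{L^\infty}$, $|J|<\varepsilon$, and $u_0(\xi_0(t_0))\le u_0(x_0)+\Vert u_0'\Vert_{L^\infty}\varepsilon$, this is $\le u_0(x_0)+\big(\sqrt{2\Vert V\Vert_{L^\infty}}+\Vert u_0'\Vert_{L^\infty}\big)\varepsilon$. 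Combined with the lower bound, this gives \eqref{neg.r.fi}.

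The step I expect to be delicate is the threshold energy $r=0$: there the conserved speed $\sqrt{2|W_\varepsilon|}$ vanishes precisely at the cell endpoints, so both the trapping in the lower bound and the time‑to‑space change of variables in the upper bound rely on the trajectory approaching, yet never attaining, a zero of $W_\varepsilon$ in finite time. This is exactly where the reduction to $\mathrm{C}^2$ potentials is needed — so that $W_\varepsilon'$ is Lipschitz and the rest state at a zero of $W_\varepsilon$ is the unique trajectory through that point — and where assumption (A1) (here $\max_{\mathbb{T}}b=0$) enters, since it is these zeros that confine every minimizer of nonpositive energy to an $\varepsilon$‑neighborhood of $x_0$.
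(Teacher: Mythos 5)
Your proof is correct, and its skeleton is the paper's: confine every trajectory of energy $r\le 0$ to an $O(\varepsilon)$-neighborhood of $x_0$ using the zeros of $V$ supplied by $\max_{\mathbb{T}}b=0$, bound the action from below by $u_0(\xi(t_0))$ since the Lagrangian is nonnegative, and from above by evaluating it on the zero-energy trajectory, whose running cost becomes $\sqrt{2}\int_J\sqrt{|V|}\,d\xi$ over an interval $J$ of length at most $\varepsilon$. The genuine difference is the mechanism behind the confinement. The paper compares the minimizer with the solutions $\gamma_{\pm}$ of the first-order equations $\dot\gamma_{\pm}=\pm\sqrt{-2V(\varepsilon\gamma_{\pm},\gamma_{\pm})}$, and needs Lemma \ref{key lemma} (that $x\mapsto\sqrt{-V(\varepsilon x,x)}$ is Lipschitz) both to make these ODEs well-posed and to show, via the divergence of $\int dx/\sqrt{-V}$, that $\gamma_{\pm}$ never reach the neighboring zeros; this yields the two-sided squeeze \eqref{sub.clm.neg.r}. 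You instead observe that each zero $z$ of $W_\varepsilon$ is a global maximum, hence a rest point of the Newton flow $\ddot\xi=-W_\varepsilon'(\xi)$, and invoke uniqueness for this second-order system at the state $(z,0)$ — which only requires $W_\varepsilon'$ to be locally Lipschitz, immediate from the $\mathrm{C}^2$ reduction made at the start of the proof — to rule out any trajectory of energy $r\le0$ ever attaining a zero of $W_\varepsilon$. This bypasses Lemma \ref{key lemma} entirely for this proposition, even though the paper flags that lemma as crucial here; the price is that you must re-derive by hand the monotonicity of the zero-energy competitor and the bound $|J|\le\varepsilon$, which the paper gets for free from the $\gamma_{-}\le\eta_\varepsilon\le\gamma_{+}$ comparison. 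The constants you obtain coincide with those in \eqref{neg.r.fi}.
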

Lemma \ref{key lemma} is crucial in establishing the proof of Proposition \ref{ccc}.
\begin{proof} Let $\eta_\varepsilon(\cdot)$ be a solution to \eqref{full-E-L} with $r \in \left[V(x_0,\varepsilon^{-1}x_0),0\right]$ we claim that
\begin{equation}\label{clm.neg.r}
\underline{y}_0\leq \eta_{\varepsilon}(s) \leq \overline{y}_0 \qquad\text{for all}\; s\in [0,\varepsilon^{-1}t_0],
\end{equation}
where 
\begin{align*}
\overline{y}_0 &= \min \left\lbrace\, y\in \left[\varepsilon^{-1}x_0,\varepsilon^{-1}x_0+1\right): b(y) = 0 \right\rbrace,\\
\underline{y}_0 &= \max \left\lbrace y\in \left(\varepsilon^{-1}x_0-1,\varepsilon^{-1}x_0\right]: b(y) = 0 \right\rbrace.
\end{align*}
The existence of $\underline{y}_0$ and $\overline{y}_0$ is due to the periodicity of $b(\cdot)$ and $b(y_0) = 0$. Recall that $\eta_\varepsilon(\cdot)$ satisfies the following equation thanks to the conservation of energy \eqref{H=r}:
\begin{equation*}
\begin{cases}
|\dot{\eta}_\varepsilon(s)| &= \sqrt{2\left(r-V\big(\varepsilon\eta_\varepsilon(s),\eta_\varepsilon(s)\big)\right)}, \qquad s\in (0,\varepsilon^{-1}t_0),\\
\;\eta_\varepsilon(0) &= \varepsilon^{-1}x_0.
\end{cases}
\end{equation*}
Let us define $\gamma_{+}:[0,\infty)\rightarrow \mathbb{R}$ and $\gamma_{-}:[0,\infty)\rightarrow \mathbb{R}$ such that
\begin{equation}\label{gamma_+}
\begin{cases}
\dot{\gamma}_{+}(s) &=  \;\;\sqrt{-2V\left(\varepsilon \gamma_{+}(s),\gamma_{+}(s) \right)} \quad\text{on}\quad (0,+\infty),\\
\gamma_{+}(0) &= \varepsilon^{-1}x_0,
\end{cases}
\end{equation}
and 
\begin{equation}\label{gamma_-}
\begin{cases}
\dot{\gamma}_{-}(s) &= -\sqrt{-2V\left(\varepsilon \gamma_{-}(s),\gamma_{-}(s) \right)} \quad\text{on}\quad (0,+\infty),\\
\gamma_{-}(0) &= \varepsilon^{-1}x_0,
\end{cases}
\end{equation}
respectively. To be precise, there are two cases: 
\begin{itemize}
\item $V\left(x_0,\varepsilon x_0\right) = 0$, by Lemma \ref{key lemma} we have $x\mapsto \sqrt{-V(\varepsilon x,x)}$ is Lipschitz on $\left[\varepsilon^{-1}x_0,\varepsilon^{-1}x_0+1\right]$. By uniqueness of solutions to \eqref{gamma_+} and \eqref{gamma_-} we have $\gamma_-(s)\equiv \gamma_+(s) \equiv \varepsilon^{-1}x_0$ for all $s\in [0,+\infty)$.
\item $V(x_0,\varepsilon x_0) \neq 0$, the solution $\gamma_+(\cdot)$ exists at least until $\gamma_+(\cdot)$ goes passing $\varepsilon^{-1}x_0+1$. Indeed, $\gamma_+(\cdot)$ remains staying inside $\left[\varepsilon^{-1}x_0,\varepsilon^{-1}x_0+1\right]$ and hence solution exists on $(0,+\infty)$. To see this, we first observe that $\gamma_+(\cdot)$ is increasing and for each time $t>0$, from 
\eqref{gamma_+} we have
\begin{equation*}
t = \int_{\gamma_+(0)}^{\gamma_+(t)} \frac{dx}{\sqrt{-V(\varepsilon x,x)}}.
\end{equation*}
Thus, the amount of time $\gamma_+(\cdot)$ needs to reach $\overline{y}_0$ is $\int_{\gamma_+(0)}^{\overline{y}_0} \frac{dx}{\sqrt{-V(\varepsilon x,x)}} = +\infty$ since $x\mapsto \sqrt{-V(\varepsilon x,x)}$ is Lipschitz on $\left[\varepsilon^{-1}x_0,\varepsilon^{-1}x_0+1\right]$ by Lemma \ref{key lemma}. We conclude that $\gamma_+(s)\rightarrow \overline{y}_0$ and similarly $\gamma_-(s)\rightarrow \underline{y}_0$ as $ s\rightarrow\infty$.
\end{itemize}
As a consequence, we have
\begin{equation}\label{sub.clm.neg.r}
\underline{y}_0\leq \gamma_{-}(s) \leq \eta_\varepsilon(s)\leq \gamma_{+}(s) \leq \overline{y}_0 \qquad\text{for all}\qquad s\in [0,\varepsilon^{-1}t_0]
\end{equation}
and thus \eqref{clm.neg.r} follows. Now we utilize \eqref{clm.neg.r} to estimate $A^\varepsilon[\eta_\varepsilon]$. For any $\eta_\varepsilon$ which solves \eqref{full-E-L} we have
\begin{equation}\label{neg.rest.1}
A^\varepsilon[\eta_\varepsilon]  \geq u_0\left(\varepsilon\eta_\varepsilon(\varepsilon^{-1}t_0)\right) \geq u_0\left(\varepsilon\eta_\varepsilon(0)\right) - \Vert u_0'\Vert_{L^\infty}\varepsilon.
\end{equation}
On the other hand, 
\begin{align}\label{neg.rest.2}
\inf_{r\leq 0} A^\varepsilon[\eta_\varepsilon] \leq A^\varepsilon\left[\gamma_{+}\right] &= \varepsilon\int_{\gamma_+(0)}^{\gamma_{+}(\varepsilon^{-1}t_0)} \sqrt{-2V(\varepsilon x,x)}dx + u_0\left(\varepsilon\gamma_{+}(\varepsilon^{-1}t_0)\right)\nonumber\\
&\leq u_0(\varepsilon \eta_\varepsilon(0)) + \left(\sqrt{2\Vert V\Vert_{L^\infty}} + \Vert u_0'\Vert_{L^\infty}\right)\varepsilon. 
\end{align}
thanks to \eqref{sub.clm.neg.r}. From \eqref{neg.rest.1} and \eqref{neg.rest.2} we obtain our claim \eqref{neg.r.fi}.
\end{proof}

For each $r\in \left(0,\infty\right)$, equation \eqref{full-E-L} has exactly two distinct solutions $\eta_{1,r,\varepsilon}(\cdot)$ and $\eta_{2,r,\varepsilon}(\cdot)$ thanks to the conservation of energy \eqref{H=r}. They are
\begin{equation}\label{p.1st.case}
\begin{cases}
\dot{\eta}_{\varepsilon}(s) &=\;\;\; \sqrt{2\big(r-V\left(\varepsilon \eta_{\varepsilon}(s),\eta_{\varepsilon}(s)\right)\big)} \qquad\text{on}\qquad (0,\varepsilon^{-1}t_0),\\
\eta_{\varepsilon}(0) &= \varepsilon^{-1}x_0,
\end{cases}
\end{equation}
and
\begin{equation}\label{p.2nd.case}
\begin{cases}
\dot{\eta}_{\varepsilon}(s) &= -\sqrt{2\big(r-V\left(\varepsilon \eta_{\varepsilon}(s),\eta_{\varepsilon}(s)\right)\big)} \qquad\text{on}\qquad (0,\varepsilon^{-1}t_0),\\
\eta_{\varepsilon}(0) &= \varepsilon^{-1}x_0,
\end{cases}
\end{equation}
respectively. Let us consider the first case $\eta_\varepsilon(\cdot)$ solves \eqref{p.1st.case} since the other case is similar. Since $\dot{\eta}_\varepsilon(s)> 0$ we have
\begin{equation}\label{pa.ch.var}
t_0 = \varepsilon\int_0^{\varepsilon^{-1}t_0} \frac{\dot{\eta}_\varepsilon(s)}{\dot{\eta}_\varepsilon(s)}\;ds  = \varepsilon \int_{\eta_\varepsilon(0)}^{\eta_\varepsilon(\varepsilon^{-1}t_0)} \frac{dx}{\sqrt{2(r - V(\varepsilon x,x))}}.
\end{equation}
This holds true for every $\varepsilon>0$, thus we deduce that $\eta_\varepsilon\left(\varepsilon^{-1}t_0\right)\rightarrow +\infty$ as $\varepsilon\rightarrow 0^+$. It is also clear that for all $\varepsilon>0$ then
\begin{equation}\label{bound}
t_0\sqrt{2r}\leq \varepsilon \eta_\varepsilon(\varepsilon^{-1}t_0) - x_0 \leq t_0\sqrt{2\left(r+\Vert V\Vert_{L^\infty}\right)}.
\end{equation}
By the conservation of energy \eqref{H=r} we can write the action functional as
\begin{equation}\label{A^eps}
A^\varepsilon[\eta_\varepsilon] =  rt_0 + 2\varepsilon \int_{\eta_\varepsilon(0)}^{\eta_\varepsilon(\varepsilon^{-1}t_0)} \frac{-V(\varepsilon x,x)}{\sqrt{2(r-V(\varepsilon x,x))}}\;dx + u_0\left(\varepsilon\eta_\varepsilon(\varepsilon^{-1}t_0)\right).
\end{equation}
We observe that the infimum of the optimization problem \eqref{u^e.def.e:1} should be taken over $r$ not too big. 
\begin{prop}\label{cccx} There exists $r_0>0$ depends only on $\mathrm{Lip}(u_0)$ and $\Vert V\Vert_{L^\infty}$ such that
\begin{equation}\label{u^e.def.e:1.5}
\inf_{r\geq r_0} A^\varepsilon[\eta_\varepsilon]\geq u^\varepsilon(x_0,t_0) + t_0.
\end{equation}

\end{prop}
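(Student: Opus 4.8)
The plan is to show that any path carrying a sufficiently large total energy $r$ is suboptimal by a fixed amount $t_0$, so that the infimum defining $u^\varepsilon(x_0,t_0)$ in \eqref{u^e.def.e:1} is unchanged after discarding such paths. We may assume $t_0>0$, since for $t_0=0$ both sides of \eqref{u^e.def.e:1.5} equal $u_0(x_0)$. Fix $r>0$ and let $\eta_\varepsilon$ solve \eqref{p.1st.case} or \eqref{p.2nd.case} with energy $r$; such solutions exist because $r>0\ge V(x_0,\varepsilon^{-1}x_0)$. Along any such path the conservation of energy \eqref{H=r} gives $\tfrac12|\dot\eta_\varepsilon(s)|^2 = r - V(\varepsilon\eta_\varepsilon(s),\eta_\varepsilon(s))$, so the running Lagrangian satisfies $\tfrac12|\dot\eta_\varepsilon|^2 - V = r - 2V \ge r$, using $V\le 0$ (which holds since $a>0$ and $\max_{\mathbb T} b = 0$, after the normalization $C_0=0$). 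Hence
\[
A^\varepsilon[\eta_\varepsilon] \;\ge\; \varepsilon\int_0^{\varepsilon^{-1}t_0} r\,ds \;+\; u_0\big(\varepsilon\eta_\varepsilon(\varepsilon^{-1}t_0)\big) \;=\; r\,t_0 + u_0\big(\varepsilon\eta_\varepsilon(\varepsilon^{-1}t_0)\big),
\]
which is also immediate from \eqref{A^eps} by discarding the nonnegative integral there. Controlling the last term by the Lipschitz bound on $u_0$ together with the displacement estimate \eqref{bound}, namely $|\varepsilon\eta_\varepsilon(\varepsilon^{-1}t_0) - x_0| \le t_0\sqrt{2(r+\Vert V\Vert_{L^\infty})}$, I obtain
\[
A^\varepsilon[\eta_\varepsilon] \;\ge\; u_0(x_0) + t_0\Big(r - \Vert u_0'\Vert_{L^\infty}\sqrt{2(r+\Vert V\Vert_{L^\infty})}\Big).
\]

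For the matching upper bound on $u^\varepsilon(x_0,t_0)$ I would test the optimal control formula \eqref{u^epsilon-def} with the constant competitor $\eta(s)\equiv\varepsilon^{-1}x_0\in\mathcal T$, which gives
\[
u^\varepsilon(x_0,t_0) \;\le\; -V(x_0,\varepsilon^{-1}x_0)\,t_0 + u_0(x_0) \;\le\; u_0(x_0) + \Vert V\Vert_{L^\infty}\,t_0.
\]
It is essential to use this crude, $t_0$-proportional bound rather than the sharper $\mathcal O(\varepsilon)$ estimate of Proposition \ref{ccc}: after dividing the final comparison by $t_0$ we must be left with a condition on $r$ that is uniform in $\varepsilon$, $x_0$, and $t_0$, and an $\varepsilon$-term would spoil this when $t_0$ is small.

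Putting the two bounds together, it then suffices to pick $r_0>0$ such that
\[
r - \Vert u_0'\Vert_{L^\infty}\sqrt{2(r+\Vert V\Vert_{L^\infty})} \;\ge\; \Vert V\Vert_{L^\infty} + 1 \qquad\text{for all } r\ge r_0,
\]
which is possible since the left side grows linearly in $r$; by the elementary inequality $\Vert u_0'\Vert_{L^\infty}\sqrt{2(r+\Vert V\Vert_{L^\infty})} \le \Vert u_0'\Vert_{L^\infty}^2 + \tfrac12(r+\Vert V\Vert_{L^\infty})$ one checks that $r_0 = 2\Vert u_0'\Vert_{L^\infty}^2 + 3\Vert V\Vert_{L^\infty} + 2$ works, and this depends only on $\mathrm{Lip}(u_0)=\Vert u_0'\Vert_{L^\infty}$ and $\Vert V\Vert_{L^\infty}$. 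Then for every $r\ge r_0$,
\[
A^\varepsilon[\eta_\varepsilon] \;\ge\; u_0(x_0) + t_0\big(\Vert V\Vert_{L^\infty}+1\big) \;=\; \big(u_0(x_0) + \Vert V\Vert_{L^\infty}t_0\big) + t_0 \;\ge\; u^\varepsilon(x_0,t_0) + t_0,
\]
and taking the infimum over all solutions with energy $r\ge r_0$ yields \eqref{u^e.def.e:1.5}. There is no substantial obstacle beyond the bookkeeping of powers of $t_0$ described above, which is precisely what forces the choice of the elementary constant-path bound for $u^\varepsilon$.
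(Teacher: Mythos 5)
Your proof is correct and follows essentially the same route as the paper: drop the nonnegative part of the action to get $A^\varepsilon[\eta_\varepsilon]\geq rt_0+u_0(\varepsilon\eta_\varepsilon(\varepsilon^{-1}t_0))$, control the endpoint via \eqref{bound} and $\mathrm{Lip}(u_0)$, bound $u^\varepsilon(x_0,t_0)$ from above by $u_0(x_0)+Ct_0$, and choose $r_0$ so the linear growth in $r$ dominates. The only (harmless) deviation is that you obtain the upper bound on $u^\varepsilon$ by testing the control formula with the constant path, whereas the paper uses the supersolution $u_0(x)+\overline{C}t$ with $\overline{C}$ from (H3); both give a bound proportional to $t_0$, which, as you rightly note, is what the argument needs.
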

\begin{proof} If $\eta_\varepsilon$ is a solution to \eqref{p.1st.case} with $r>0$, then from \eqref{A^eps} we have
\begin{align}\label{res.pos.r.1}
A^\varepsilon[\eta_\varepsilon] &\geq rt_0 + u_0\left(\varepsilon\eta_\varepsilon(\varepsilon^{-1}t_0)\right)\nonumber\\
&\geq   rt_0+u_0(x_0) - \Vert u_0'\Vert_{L^\infty}\left|\varepsilon \eta_\varepsilon(\varepsilon^{-1}t_0)-x_0\right| \nonumber\\
&\geq rt_0+u_0(x_0) - \Vert u_0'\Vert_{L^\infty}t_0\sqrt{2(r+\Vert V\Vert_{L^\infty})}
\end{align}
thanks to \eqref{bound}. On the other hand, by assumption (H3) we can define
\begin{equation*}
\overline{C} = \sup_{(x,y)}\Big\lbrace |H(x,y,p)|: |p|\leq \Vert u_0'\Vert_{L^\infty}\Big\rbrace < \infty
\end{equation*}
then $\overline{u}(x,t) = u_0(x)+ \overline{C} t$ is a viscosity supersolution to \eqref{C_varepsilon}, therefore
\begin{equation}\label{res.pos.r.2}
u^\varepsilon(x_0,t_0) \leq \overline{u}(x_0,t_0) = u_0(x_0) + \overline{C} t_0.
\end{equation}
There exists $r_0>0$ such that for $r\geq r_0$ we have 
\begin{align*}
r\geq \overline{C} +1 + \Vert u_0'\Vert_{L^\infty}\sqrt{2\left(r+\Vert V\Vert_{L^\infty}\right)},
\end{align*}
which is equivalent to
\begin{equation*}
rt_0+u_0(x_0) - \Vert u_0'\Vert_{L^\infty}t_0\sqrt{2\left(r+\Vert V\Vert_{L^\infty}\right)} \geq u_0(x_0) + (\overline{C} +1)t_0. 
\end{equation*}
This estimate together with \eqref{res.pos.r.1} and \eqref{res.pos.r.2} gives us 
\begin{equation*}
A^\varepsilon[\eta_\varepsilon] \geq u^\varepsilon(x_0,t_0) + t_0\quad\text{for all}\quad r\geq r_0
\end{equation*}
which proves our claim \eqref{u^e.def.e:1.5}, as the case $\eta_\varepsilon$ solves \eqref{p.2nd.case} can be done similarly.
\end{proof}
With \eqref{u^e.def.e:1.5}, the optimization problem \eqref{u^e.def.e:1} can be reduced to
\begin{equation}\label{aaa}
u^\varepsilon(x_0,t_0) =  \min \left\lbrace \inf_{r\leq 0} A^{\varepsilon}[\eta_\varepsilon], \inf_{0< r< r_0} A^{\varepsilon}[\eta_\varepsilon] \right\rbrace.
\end{equation}
Thanks to \eqref{neg.r.fi}, we only need to focus on the case $0< r < r_0$. For simplicity, let us define the following interval $I_0\subset \mathbb{R}$ to be
\begin{equation*}
I_0 = I_0(T,R) = \left[-R,c_0+R\right] \qquad\text{where}\qquad c_0 = T\sqrt{2\left(r_0+\Vert V\Vert_{L^\infty}\right)}.
\end{equation*}
Since \eqref{bound} is true for all $0<r<r_0$, for all $(x_0,t_0)\in [-R,R]\times [0,T]$ we have 
\begin{equation*}
\varepsilon\eta_\varepsilon(\varepsilon^{-1}t_0) \in I_0.
\end{equation*}
Let us define $c_{1,r}>0$ and $c_{2,r}<0$ be unique numbers such that
\begin{equation}\label{def.c_r}
\int_{x_0}^{c_{1,r}}\int_0^1 \frac{dydx}{\sqrt{2\left(r-V(x,y)\right)}} =\int_{c_{2,r}}^{x_0} \int_0^1 \frac{dydx}{\sqrt{2(r-V(x,y))}} = t_0,
\end{equation}
repsectively.
\begin{prop}\label{eps.eta(eps^-1)-c_r} Let $ \alpha_T = \min_{x\in I_0} a(x)$ and $ \beta_T = \max_{x\in I_0} a(x)$, then 
\begin{equation}\label{bound-1}
\left|\varepsilon\eta_\varepsilon(\varepsilon^{-1}t_0) - c_{1,r}\right| \leq C_K\varepsilon
\end{equation}
for $0 <r< r_0$ where $C_K$ is a constant only depends on $R,T$ and $V$.
\end{prop}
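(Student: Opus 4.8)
The plan is to reduce the statement to a quantitative ergodic (periodic homogenization of an integral) estimate. Write $X_\varepsilon := \varepsilon\eta_\varepsilon(\varepsilon^{-1}t_0)$ and set $G(x,y) := \big(2(r-V(x,y))\big)^{-1/2}$, the ``inverse speed'', which is $1$-periodic in $y$ and whose $y$-average $\overline{G}(x):=\int_0^1 G(x,y)\,dy$ is precisely the integrand appearing in \eqref{def.c_r}. Substituting $z=\varepsilon x$ in \eqref{pa.ch.var} rewrites the change-of-variables identity as $t_0=\int_{x_0}^{X_\varepsilon}G(z,z/\varepsilon)\,dz$, while by definition $t_0=\int_{x_0}^{c_{1,r}}\overline{G}(x)\,dx$. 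Since $\overline{G}>0$, the map $X\mapsto\int_{x_0}^X\overline{G}$ is an increasing bijection, so if I can show $\big|\int_{x_0}^{X_\varepsilon}\overline{G}(z)\,dz - t_0\big| = \big|\int_{x_0}^{X_\varepsilon}\big(\overline{G}(z)-G(z,z/\varepsilon)\big)\,dz\big|$ is at most $C\varepsilon\min_{I_0}\overline{G}$, then dividing by $\min_{I_0}\overline{G}$ — legitimate since $X_\varepsilon$ and $c_{1,r}$ both lie in $I_0$ (the former by \eqref{bound}, the latter because $\overline{G}$ bounded below forces $c_{1,r}-x_0\le c_0$) — gives $|X_\varepsilon-c_{1,r}|\le C\varepsilon$.

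For the ergodic estimate I would run the standard two-scale argument. The key pointwise inequality in the separable case is $|V_x(x,y)|=|a'(x)b(y)|\le\alpha_T^{-1}\Vert a'\Vert_{L^\infty}|V(x,y)|\le\alpha_T^{-1}\Vert a'\Vert_{L^\infty}\big(r-V(x,y)\big)$, using $a\ge\alpha_T>0$ and $V\le 0\le r$; differentiating $G$ then yields $|\partial_x G(x,y)|\le C_1 G(x,y)$ with $C_1:=\Vert a'\Vert_{L^\infty}/(2\alpha_T)$, hence $|\overline{G}'(x)|\le C_1\overline{G}(x)$. Next I partition $[x_0,X_\varepsilon]$ into an initial partial fast-period $[x_0,z_0]$, full fast-periods $[z_k,z_{k+1}]$ with $z_k=\varepsilon(\lceil x_0/\varepsilon\rceil+k)$ for $k=0,\dots,N-1$, and a final partial fast-period $[z_N,X_\varepsilon]$ (both partial pieces having $z$-length $<\varepsilon$). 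On each full period the substitution $y=z/\varepsilon$ gives $\int_{z_k}^{z_{k+1}}G(z_k,z/\varepsilon)\,dz=\varepsilon\overline{G}(z_k)$ exactly, and $|\partial_x G|\le C_1 G$ lets me freeze the slow variable at $z_k$ in both $G(z,z/\varepsilon)$ and $\overline{G}(z)$ with relative error $O(\varepsilon)$; summing over the full periods bounds their total contribution by $C\varepsilon\int_{x_0}^{X_\varepsilon}G(z,z/\varepsilon)\,dz=C\varepsilon t_0$. On the two partial pieces there is no averaging cancellation, but each is contained in an adjacent full fast-period, so $\int_{x_0}^{z_0}G(z,z/\varepsilon)\,dz$ and $\int_{z_N}^{X_\varepsilon}G(z,z/\varepsilon)\,dz$ are each $\le 2\varepsilon\max_{I_0}\overline{G}$, and likewise $\int_{x_0}^{z_0}\overline{G},\ \int_{z_N}^{X_\varepsilon}\overline{G}\le\varepsilon\max_{I_0}\overline{G}$. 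Altogether $\big|\int_{x_0}^{X_\varepsilon}\big(\overline{G}(z)-G(z,z/\varepsilon)\big)\,dz\big|\le C\varepsilon\,t_0+C\varepsilon\max_{I_0}\overline{G}$.

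Combining this with $\int_{x_0}^{c_{1,r}}\overline{G}=t_0$ and the monotonicity step gives $|X_\varepsilon-c_{1,r}|\le\dfrac{C\varepsilon t_0+C\varepsilon\max_{I_0}\overline{G}}{\min_{I_0}\overline{G}}=C\varepsilon\Big(\dfrac{t_0}{\min_{I_0}\overline{G}}+\dfrac{\max_{I_0}\overline{G}}{\min_{I_0}\overline{G}}\Big)$. The first ratio is $\le T/m$ where $m:=\big(2(r_0+\Vert V\Vert_{L^\infty})\big)^{-1/2}\le\overline{G}$; and in the separable case $\alpha_T\le a\le\beta_T$ forces $\max_{I_0}\overline{G}\le\sqrt{\beta_T/\alpha_T}\,\min_{I_0}\overline{G}$ directly (from $(r+\alpha_T t)^{-1/2}\le\sqrt{\beta_T/\alpha_T}\,(r+\beta_T t)^{-1/2}$ for $t\ge0$). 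Hence $|X_\varepsilon-c_{1,r}|\le C\varepsilon\big(T/m+\sqrt{\beta_T/\alpha_T}\big)=:C_K\varepsilon$ with $C_K$ depending only on $R,T$ and $V$.

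The hard part, and the only genuinely delicate point, is the uniformity of $C_K$ as $r\to0^+$. The partial-period terms produce the quantity $\varepsilon\max_{I_0}\overline{G}$, which really does blow up as $r\to0$ (like $\varepsilon\log(1/r)$ for a generic potential), and it is rescued only because it is divided by $\min_{I_0}\overline{G}$ and the ratio $\max_{I_0}\overline{G}/\min_{I_0}\overline{G}$ stays bounded. In the separable setting this ratio is bounded for free by $\sqrt{\beta_T/\alpha_T}$; in the general framework of Theorem \ref{general case} this is exactly what assumption (A4) guarantees, while the pointwise bound $|\partial_x G|\le C_1 G$ used in the two-scale estimate over full periods is what assumptions (A2)--(A3) provide.
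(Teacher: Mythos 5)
Your proposal is correct and follows essentially the same route as the paper: the identity $t_0=\int_{x_0}^{X_\varepsilon}\mathcal{K}_r(z,z/\varepsilon)\,dz=\int_{x_0}^{c_{1,r}}\int_0^1\mathcal{K}_r\,dy\,dx$, a quantitative ergodic estimate for the oscillatory integral, division by $\min_{I_0}\int_0^1\mathcal{K}_r$, and uniformity in $r$ via the pointwise bound $|\partial_x\mathcal{K}_r|\le \tfrac12\max|a'/a|\,\mathcal{K}_r$ together with the ratio bound $\max_{I_0}\int_0^1\mathcal{K}_r\le\sqrt{\beta_T/\alpha_T}\,\min_{I_0}\int_0^1\mathcal{K}_r$. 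The only (inessential) difference is that you re-derive the ergodic estimate by partitioning into fast periods and freezing the slow variable, whereas the paper invokes its Lemma \ref{Lemma on average - C1 version}, proved in the appendix via an antiderivative corrector.
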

\begin{proof} Let us define $\mathcal{K}_r(x,y) = \frac{1}{\sqrt{2(r-V(x,y))}}$ for $(x,y)\in \mathbb{R}\times \mathbb{T}$. From \eqref{pa.ch.var} and \eqref{def.c_r} we have
\begin{equation}\label{June 6:1}
t_0 = \int_{x_0}^{\varepsilon
\eta_\varepsilon(\varepsilon^{-1}t_0)} \mathcal{K}_r\left(x,\frac{x}{\varepsilon}\right)dx  = \int_{x_0}^{c_{1,r}} \int_0^1 \mathcal{K}_r(x,y)\;dy\;dx. 
\end{equation}
Using Lemma \ref{Lemma on average - C1 version} we obtain
\begin{equation}\label{June 6:2}
\left|\int_{x_0}^{\varepsilon\eta_\varepsilon(\varepsilon^{-1}t_0)} \mathcal{K}_r\left(x,\frac{x}{\varepsilon}\right)\;dx- \int_{x_0}^{\varepsilon\eta_\varepsilon(\varepsilon^{-1}t_0)} \int_0^1 \mathcal{K}_r(x,y)\;dy dx\right| \leq K\varepsilon 
\end{equation}
where
\begin{equation}\label{June 6:3}
K = 2\max_{x\in I_0}\int_0^1 \mathcal{K}_r(x,y)\;dy + c_0\max_{x\in I_0}\int_0^1 \frac{\partial K_r}{\partial x}(x,y)\;dy.
\end{equation}
Using \eqref{June 6:1} in \eqref{June 6:2} we have
\begin{equation*}
 \left| \int_{c_{1,r}}^{\varepsilon\eta_\varepsilon(\varepsilon^{-1}t_0)} \int_0^1 \mathcal{K}_r(x,y)\;dy dx\right| \leq K\varepsilon
\end{equation*}
which implies that
\begin{equation}\label{June 6:4}
\left(\min_{x\in I_0} \int_{0}^1 \mathcal{K}_r(x,y)\;dy \right)\left|\varepsilon\eta_\varepsilon(\varepsilon^{-1}t_0) - c_{1,r}\right| \leq K\varepsilon.
\end{equation}
On $I_0$ we have $0<\alpha_T \leq a(x) \leq \beta_T$, which implies that
\begin{align}\label{June 6:sub-2}
 \int_0^1 \frac{dy}{\sqrt{2(r-\beta_T b(y))}} &\leq \min_{x\in I_0}\int_0^1 \mathcal{K}_r(x,y)\;dy \nonumber \\
 &\leq \max_{x\in I_0}\int_0^1 \mathcal{K}_r(x,y)\;dy \leq \int_0^1 \frac{dy}{\sqrt{2(r-\alpha_T b(y))}}.
\end{align}
Since $\alpha_T \leq \beta_T$, it is clear that
\begin{align}\label{bound.2}
\int_0^1 \frac{dy}{\sqrt{2(r-\alpha_T b(y))}}  \leq \sqrt{\frac{\beta_T}{\alpha_T}} \int_0^1 \frac{dy}{\sqrt{2(r-\beta_T b(y))}}.
\end{align}
From direct calculation
we have
\begin{equation}\label{June 6:sub-3}
\max_{x\in I_0}\int_0^1 \left|\frac{\partial \mathcal{K}_r}{\partial x}(x,y)\right|\;dy \leq \frac{1}{2}\max_{x\in I_0}\left|\frac{a'(x)}{a(x)}\right|\int_0^1\frac{dy}{\sqrt{2(r-\alpha_T b(y))}}.
\end{equation}
Use \eqref{June 6:sub-2} and \eqref{June 6:sub-3} in \eqref{June 6:3} we  deduce that
\begin{equation}\label{June 6:5}
K \leq  \left(2 + \frac{c_0}{2} \max_{x\in I_0}\left|\frac{a'(x)}{a(x)}\right|\right)\left(\int_0^1 \frac{dy}{\sqrt{2(r-\alpha_T b(y))}}\right).
\end{equation}
Next, we use \eqref{June 6:sub-2}, \eqref{June 6:5} in \eqref{June 6:4} to deduce that
\begin{equation*}\label{bound.1}
\int_0^1 \frac{dy}{\sqrt{2(r-\beta_T b(y))}}\left|\varepsilon\eta_\varepsilon(\varepsilon^{-1} t) - c_{1,r}\right| \leq \left(2 + \frac{c_0}{2}\max_{x\in I_0}\frac{|a'(x)|}{a(x)}\right)\left(\int_0^1 \frac{dy}{\sqrt{2(r-\alpha_T b(y))}}\right)\varepsilon.
\end{equation*}
From that and \eqref{bound.2} we deduce \eqref{bound-1} with
\begin{equation}\label{C_K}
C_K = \sqrt{\frac{\beta_T}{\alpha_T}}\left(2 + \frac{c_0}{2}\max_{x\in I_0}\left|\frac{a'(x)}{a(x)}\right|\right).
\end{equation}
It is clear that $C_K$ depends only on $R,T$ and $a(x)$.
\end{proof}
In view of \eqref{A^eps}, for $0 <r < r_0$ we aim to show that the integral term is close to its average with an error of order $\mathcal{O}(\varepsilon)$.

\begin{prop}\label{pro2ndterm} For $0<r<r_0$, in view of \eqref{A^eps} we have that
\begin{equation}\label{est.Lag.3}
\left|\varepsilon\int_{x_0}^{\eta_{\varepsilon}(\varepsilon^{-1}t_0)} \frac{-V(\varepsilon x,x)}{\sqrt{2(r-V(\varepsilon x,x))}}\;dx - \int_{x_0}^{c_{1,r}} \int_0^1 \frac{-V(x,y)}{\sqrt{2(r-V(x,y))}}\;dy\;dx\right| \leq C_F\varepsilon
\end{equation}
where $C_F$ is some constant only depends on $R,T$ and $V$. 
\end{prop}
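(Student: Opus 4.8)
The plan is to reproduce, with a new integrand in place of $\mathcal{K}_r$, the same two–step scheme that proved Proposition~\ref{eps.eta(eps^-1)-c_r}. Set
\[
\mathcal{F}_r(x,y) = \frac{-V(x,y)}{\sqrt{2\left(r-V(x,y)\right)}}\,, \qquad (x,y)\in\mathbb{R}\times\mathbb{T}.
\]
Since $V\leq 0$ we have $\mathcal{F}_r\geq 0$, and $\mathcal{F}_r$ is $\mathrm{C}^1$ and $\mathbb{Z}$–periodic in $y$. Performing the change of variables $z=\varepsilon x$ (which turns $\varepsilon\int_{\eta_\varepsilon(0)}^{\eta_\varepsilon(\varepsilon^{-1}t_0)}\mathcal{F}_r(\varepsilon x,x)\,dx$ into $\int_{x_0}^{\varepsilon\eta_\varepsilon(\varepsilon^{-1}t_0)}\mathcal{F}_r(z,z/\varepsilon)\,dz$), the first integral in \eqref{est.Lag.3} becomes an integral over a subinterval of $I_0$ of length at most $c_0$, by \eqref{bound} and the definition of $c_0$. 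Applying Lemma~\ref{Lemma on average - C1 version} to $\mathcal{F}_r$ on $\left[x_0,\varepsilon\eta_\varepsilon(\varepsilon^{-1}t_0)\right]$ gives, exactly as in \eqref{June 6:3},
\[
\left|\int_{x_0}^{\varepsilon\eta_\varepsilon(\varepsilon^{-1}t_0)}\mathcal{F}_r\!\left(x,\frac{x}{\varepsilon}\right)dx - \int_{x_0}^{\varepsilon\eta_\varepsilon(\varepsilon^{-1}t_0)}\int_0^1\mathcal{F}_r(x,y)\,dy\,dx\right| \leq \widetilde{K}\varepsilon,
\]
with $\widetilde{K} = 2\max_{x\in I_0}\int_0^1\mathcal{F}_r(x,y)\,dy + c_0\max_{x\in I_0}\int_0^1\left|\frac{\partial\mathcal{F}_r}{\partial x}(x,y)\right|dy$.

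For the second step I replace the upper limit $\varepsilon\eta_\varepsilon(\varepsilon^{-1}t_0)$ by $c_{1,r}$, which also lies in $I_0$ since $\int_0^1\mathcal{K}_r(x,y)\,dy \geq \left(2(r+\Vert V\Vert_{L^\infty})\right)^{-1/2}$ forces $c_{1,r}-x_0\leq t_0\sqrt{2(r+\Vert V\Vert_{L^\infty})}\leq c_0$ via \eqref{def.c_r}. This costs at most
\[
\left|\int_{c_{1,r}}^{\varepsilon\eta_\varepsilon(\varepsilon^{-1}t_0)}\int_0^1\mathcal{F}_r(x,y)\,dy\,dx\right| \leq \left(\max_{x\in I_0}\int_0^1\mathcal{F}_r(x,y)\,dy\right)\left|\varepsilon\eta_\varepsilon(\varepsilon^{-1}t_0)-c_{1,r}\right| \leq C_K\left(\max_{x\in I_0}\int_0^1\mathcal{F}_r\,dy\right)\varepsilon
\]
by Proposition~\ref{eps.eta(eps^-1)-c_r}. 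Adding the two displays yields \eqref{est.Lag.3} with $C_F = \widetilde{K} + C_K\max_{x\in I_0}\int_0^1\mathcal{F}_r\,dy$, so everything reduces to bounding $\widetilde{K}$ and $\max_{x\in I_0}\int_0^1\mathcal{F}_r\,dy$ \emph{uniformly in} $r\in(0,r_0)$.

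This uniform control is the only real point, and it is where the separable structure is used. Because $r>0$ and $V\le 0$, $\mathcal{F}_r(x,y) = \frac{|V(x,y)|}{\sqrt{2(r+|V(x,y)|)}} \leq \sqrt{|V(x,y)|/2} \leq \sqrt{\Vert V\Vert_{L^\infty}/2}$, which handles the zeroth–order term. For the derivative, a direct computation gives
\[
\frac{\partial\mathcal{F}_r}{\partial x}(x,y) = -\,\frac{\left(2r-V(x,y)\right)V_x(x,y)}{\left(2(r-V(x,y))\right)^{3/2}}\,, \qquad\text{hence}\qquad \left|\frac{\partial\mathcal{F}_r}{\partial x}(x,y)\right| \leq \frac{|V_x(x,y)|}{\sqrt{2\left(r+|V(x,y)|\right)}}\,,
\]
using $0\leq 2r+|V|\leq 2(r+|V|)$. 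Now $|V_x(x,y)| = \frac{|a'(x)|}{a(x)}|V(x,y)|$ holds pointwise (and trivially where $b(y)=0$), so
\[
\left|\frac{\partial\mathcal{F}_r}{\partial x}(x,y)\right| \leq \frac{|a'(x)|}{a(x)}\cdot\frac{|V(x,y)|}{\sqrt{2\left(r+|V(x,y)|\right)}} \leq \left(\max_{x\in I_0}\left|\frac{a'(x)}{a(x)}\right|\right)\sqrt{\frac{\Vert V\Vert_{L^\infty}}{2}}\,,
\]
uniformly in $r$. The singularity one is fighting here is the blow–up of $(r+|V|)^{-1/2}$ as $r,|V|\to 0$, and it is defused precisely because $|V_x|$ carries a compensating factor of $|V|$ — this is the elementary prototype of assumptions $\mathrm{(A2)}$–$\mathrm{(A4)}$ in Theorem~\ref{general case}. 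With these bounds $\widetilde{K}$, and hence $C_F$, depends only on $R,T$ and $V$ (through $\Vert V\Vert_{L^\infty}$, $\max_{I_0}|a'/a|$, $c_0$ and $r_0$), which completes the argument; the symmetric case in which $\eta_\varepsilon$ solves \eqref{p.2nd.case} is identical. I do not anticipate any obstacle beyond this bookkeeping in the present classical–mechanics setting.
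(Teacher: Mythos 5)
Your proposal is correct and follows essentially the same route as the paper: define $\mathcal{F}_r$, apply Lemma \ref{Lemma on average - C1 version} with uniform-in-$r$ bounds on $\mathcal{F}_r$ and $\partial_x\mathcal{F}_r$ (the latter via $|V_x|=\tfrac{|a'|}{a}|V|$ cancelling the $(r+|V|)^{-1/2}$ singularity), then absorb the endpoint discrepancy using Proposition \ref{eps.eta(eps^-1)-c_r}. The only cosmetic difference is that you shift the endpoint on the averaged integral whereas the paper shifts it on the oscillatory one; the resulting constant $C_F$ has the same structure.
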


\begin{proof} To see it, let
\begin{equation*}
\mathcal{F}_r(x,y) = \frac{-V(x,y)}{\sqrt{2(r-V(x,y))}}, \qquad (x,y)\in \mathbb{R}\times \mathbb{T}.
\end{equation*}
Using Lemma \ref{Lemma on average - C1 version} we obtain
\begin{align}\label{est.Lag.1}
&\left|\int_{x_0}^{c_{1,r}} \frac{-V(x,\varepsilon^{-1}x)}{\sqrt{2(r-V(x,\varepsilon^{-1} x))}}\;dx - \int_{x_0}^{c_{1,r}} \int_0^1 \frac{-V(x,y)}{\sqrt{2(r-V(x,y))}}\;dy\;dx \right|\leq (2F_1 + c_0F_2)\varepsilon
\end{align}
where
\begin{align}
F_1&:=  \left(\Vert V\Vert_{L^\infty}\right)^{1/2} \geq \max_{\mathbb{R}\times\mathbb{T}} |\mathcal{F}_r(x,y)|  \label{F_1}\\
F_2&:= \frac{3}{2\sqrt{2}} \left(\Vert V\Vert_{L^\infty}\right)^{1/2} \max_{x\in I_0}\left|\frac{a'(x)}{a(x)}\right| \geq \max_{I_0\times \mathbb{T}}\left|\frac{\partial \mathcal{F}_r}{\partial x}(x,y)\right| . \label{F_2}
\end{align}
On the other hand, we have 
\begin{align}\label{est.Lag.2}
&\left|\int_{x_0}^{\varepsilon \eta_\varepsilon(\varepsilon^{-1}t_0)}\frac{-V(x,\varepsilon^{-1}x)}{\sqrt{2(r-V(x,\varepsilon^{-1}x))}}\;dx-\int_{x_0}^{c_
{1,r}}\frac{-V(x,\varepsilon^{-1}x)}{\sqrt{2(r-V(x,\varepsilon^{-1}x))}}\;dx \right|\nonumber\\
&\qquad\qquad\qquad\qquad\qquad\qquad\qquad\qquad\quad\; \leq F_1\left|\varepsilon\eta_{\varepsilon}(\varepsilon^{-1}t_0)-c_{1,r}\right| \leq F_1C_K \varepsilon 
\end{align}
thanks to \eqref{bound-1}. From \eqref{est.Lag.1} and \eqref{est.Lag.2} we deduce that
\begin{align}\label{est.Lag.3.1}
&\left|\int_{x_0}^{\varepsilon\eta_{\varepsilon}(\varepsilon^{-1}t_0)} \frac{-V(x,\varepsilon^{-1}x)}{\sqrt{2(r-V(x,\varepsilon^{-1} x))}}\;dx - \int_{x_0}^{c_{1,r}} \int_0^1 \frac{-V(x,y)}{\sqrt{2(r-V(x,y))}}\;dy\;dx\right| \nonumber\\
&\quad\qquad\qquad\qquad\qquad\qquad\qquad\quad\leq (2F_1 + F_2c_0+F_1C_K)\varepsilon.
\end{align}
From \eqref{est.Lag.3.1} we obtain our claim \eqref{est.Lag.3} with
\begin{equation}\label{C_F}
C_F = \left(\Vert V\Vert_{L^\infty}\right)^{1/2}\left(2+2\sqrt{\frac{\beta_T}{\alpha_T}} + c_0\left(\frac{3}{2\sqrt{2}}+\frac{1}{2}\sqrt{\frac{\beta_T}{\alpha_T}}\right)\max_{x\in {I_0}}\left|\frac{a'(x)}{a(x)}\right|\right).
\end{equation}
\end{proof}

\begin{prop}\label{prosym3} We have the following estimate:
\begin{equation}\label{negative}
\left|\inf_{{0<r< r_0}\atop {i=1,2}} A^{\varepsilon}[\eta_{i,r,\varepsilon}] - \inf_{0<r< r_0} I(r)\right| \leq C\varepsilon 
\end{equation}
where $C$ is a constant depends only on $R,T$, $a(x)$ and $\Vert V\Vert_{L^\infty}$, $I(r) = \min \{I_1(r),I_2(r)\}$ where
\begin{align}
I_1(r) &= rt_0 + 2\int_{x_0}^{c_{1,r}} \int_0^1 \frac{-V(x,y)}{\sqrt{2(r-V(x,y))}}\;dydx + u_0\left(c_{1,r}\right), \label{I_1(r).def}\\
I_2(r) &= rt_0 + \,2\int_{c_{2,r}}^{x_0}\;\, \int_0^1 \frac{-V(x,y)}{\sqrt{2(r-V(x,y))}}\;dydx + u_0(c_{2,r}). \label{I_2(r).def}
\end{align}
\end{prop}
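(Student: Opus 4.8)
The plan is to reduce the statement to the pointwise (in $r$ and $i$) estimate
\begin{equation*}
\bigl|A^\varepsilon[\eta_{i,r,\varepsilon}] - I_i(r)\bigr| \leq C\varepsilon \qquad \text{for all } r\in(0,r_0),\ i=1,2,
\end{equation*}
with a constant $C$ that does not depend on $r$ (nor on the base point $(x_0,t_0)\in[-R,R]\times[0,T]$), and then to pass to the infimum via the elementary inequality $\bigl|\inf_X f-\inf_X g\bigr|\leq\sup_X|f-g|$.

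For the case $i=1$, where $\eta_\varepsilon=\eta_{1,r,\varepsilon}$ solves \eqref{p.1st.case}, I would start from the conservation-of-energy form \eqref{A^eps} of the action and subtract the definition \eqref{I_1(r).def} of $I_1(r)$. The resulting difference breaks into two pieces. The first is twice the quantity estimated in Proposition \ref{pro2ndterm}, hence bounded by $2C_F\varepsilon$ in absolute value. The second is the boundary discrepancy $u_0\bigl(\varepsilon\eta_\varepsilon(\varepsilon^{-1}t_0)\bigr)-u_0(c_{1,r})$; since $u_0\in\mathrm{Lip}(\mathbb{R})$ and $\bigl|\varepsilon\eta_\varepsilon(\varepsilon^{-1}t_0)-c_{1,r}\bigr|\leq C_K\varepsilon$ by Proposition \ref{eps.eta(eps^-1)-c_r}, this is at most $\Vert u_0'\Vert_{L^\infty}C_K\varepsilon$. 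Adding the two bounds gives $\bigl|A^\varepsilon[\eta_{1,r,\varepsilon}]-I_1(r)\bigr|\leq(2C_F+\Vert u_0'\Vert_{L^\infty}C_K)\varepsilon$. The essential observation is that $C_F$ and $C_K$ depend only on $R$, $T$, $a(x)$ and $\Vert V\Vert_{L^\infty}$, and in particular not on $r$, since $r_0$ is a fixed constant; so the estimate is uniform in $r\in(0,r_0)$. The case $i=2$, comparing the path \eqref{p.2nd.case} with $I_2(r)$ via $c_{2,r}$, follows from the mirror versions of Propositions \ref{eps.eta(eps^-1)-c_r} and \ref{pro2ndterm} and yields the same bound.

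To finish, I would note that $I(r)=\min\{I_1(r),I_2(r)\}$ forces $\inf_{0<r<r_0}I(r)=\inf_{(r,i)\in(0,r_0)\times\{1,2\}}I_i(r)$, and that the first term of \eqref{negative} is $\inf_{(r,i)}A^\varepsilon[\eta_{i,r,\varepsilon}]$ over the very same index set; applying $\bigl|\inf_X f-\inf_X g\bigr|\leq\sup_X|f-g|$ with $X=(0,r_0)\times\{1,2\}$ then gives \eqref{negative} with $C=2C_F+\Vert u_0'\Vert_{L^\infty}C_K$.

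The main obstacle is not conceptual: this proposition is essentially the packaging of Propositions \ref{eps.eta(eps^-1)-c_r} and \ref{pro2ndterm}. The points that require care are, first, making sure the error constants from those two propositions are genuinely uniform over the fixed range $r\in(0,r_0)$, so that taking the supremum over $r$ is harmless; and second, checking that the two infima in \eqref{negative} range over identical index sets, which legitimizes the $\bigl|\inf-\inf\bigr|\leq\sup|\cdot|$ step. The $i=2$ case contributes nothing new, by the left--right symmetry already exploited throughout this section.
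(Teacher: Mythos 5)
Your proposal is correct and follows essentially the same route as the paper: the pointwise bound $\left|A^\varepsilon[\eta_{i,r,\varepsilon}]-I_i(r)\right|\leq(2C_F+C_K\Vert u_0'\Vert_{L^\infty})\varepsilon$ obtained by combining Propositions \ref{eps.eta(eps^-1)-c_r} and \ref{pro2ndterm} with the Lipschitz continuity of $u_0$, followed by passing to the infimum over $r$ (the paper does this separately for $i=1,2$ and takes $C=\max\{C_1,C_2\}$, which is the same as your single joint index set). No gaps.
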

\begin{proof} Within our notation $\eta_\varepsilon \equiv \eta_{1,r,\varepsilon} $, we have 
\begin{equation}\label{est.u0.1}
\left|u_0\left(\varepsilon \eta_\varepsilon(\varepsilon^{-1}t_0)\right) - u_0(c_{1,r})\right|\leq \Vert u_0'\Vert_{L^\infty}\left|\varepsilon \eta_\varepsilon(\varepsilon^{-1}t_0)-c_{1,r}\right|.
\end{equation}
since $u_0\in \mathrm{Lip}(\mathbb{R})$. In view of \eqref{A^eps} and \eqref{bound-1}, \eqref{est.Lag.3}, \eqref{est.u0.1} we conclude that
\begin{align}\label{fi.est.1}
\left|A^\varepsilon[\eta_\varepsilon] - I_1(r)\right| 
&\leq 2C_F \varepsilon + \Vert u_0'\Vert_{L^\infty}\left|\varepsilon \eta_\varepsilon(\varepsilon^{-1}t)-c_{1,r}\right| \nonumber\\
&\leq \big(2C_F+C_K\Vert u_0'\Vert_{L^\infty}\big)\varepsilon.
\end{align}
Taking the infimum over $0< r< r_0$ we obtain 
\begin{equation}\label{pos.r.fi.1}
\left|\inf_{0< r< r_0} A^\varepsilon [\eta_{1,r,\varepsilon}] - \inf_{0 < r < r_0} I_1(r)\right| \leq C_1 \varepsilon
\end{equation}
where $C_1 = 2C_F+C_K\Vert u_0'\Vert_{L^\infty}$. Similarly for the case $\eta_{2,r,\varepsilon}$ solves \eqref{p.2nd.case}, we obtain
\begin{equation}\label{pos.r.fi.2}
\left|\inf_{0< r< r_0} A^\varepsilon[\eta_{2,r,\varepsilon}] - \inf_{0< r< r_0} I_2(r) \right|\leq C_2 \varepsilon
\end{equation}
where $C_2$ is some constant depends on $R,T$, $a(x)$ and $\Vert V\Vert_{L^\infty}$ in the same manner as $C_1$. Thus our claim \eqref{negative} is correct with $C = \max\{C_1,C_2\}$.
\end{proof}
From \eqref{neg.r.fi}, \eqref{aaa} and \eqref{negative} we conclude that
\begin{equation*}
\left|u^\varepsilon(x_0,t_0) - u(x_0,t_0)\right| \leq \left(\max\left\lbrace \sqrt{2\Vert V\Vert_{L^\infty}} + \Vert u'_0\Vert_{L^\infty},C \right\rbrace\right)\varepsilon
\end{equation*}
and the proof is complete.
\end{proof}

\begin{cor} We have the following representation formula
\begin{equation*}
u(x_0,t_0) = \min\left \lbrace u_0(x_0), \min\left\lbrace \inf_{0< r< r_0} I_1(r),\inf_{0< r< r_0} I_2(r)\right\rbrace \right\rbrace
\end{equation*}
where $I_1(r)$ and $I_2(r)$ are defined in \eqref{I_1(r).def} and \eqref{I_2(r).def} respectively.
\end{cor}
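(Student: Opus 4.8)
The plan is to read the identity off the proof of Theorem~\ref{simple case}: every estimate established there compares $u^\varepsilon(x_0,t_0)$ with a quantity that does \emph{not} depend on $\varepsilon$, so it suffices to assemble those estimates and then let $\varepsilon\to 0^+$.

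Fix $(x_0,t_0)$. First I would record that the threshold $r_0$ from Proposition~\ref{cccx} depends only on $\mathrm{Lip}(u_0)$ and $\|V\|_{L^\infty}$, and that the numbers $c_{1,r},c_{2,r}$ in \eqref{def.c_r}, hence the functions $I_1,I_2$ in \eqref{I_1(r).def}--\eqref{I_2(r).def}, depend only on $x_0$, $t_0$, $V$ and $u_0$; none of these objects involves $\varepsilon$. By \eqref{aaa},
\[
u^\varepsilon(x_0,t_0)=\min\Bigl\{\,\inf_{r\le 0}A^\varepsilon[\eta_\varepsilon],\ \inf_{{0<r<r_0}\atop{i=1,2}}A^\varepsilon[\eta_{i,r,\varepsilon}]\Bigr\}.
\]
Proposition~\ref{ccc} controls the first term, $\bigl|\inf_{r\le 0}A^\varepsilon[\eta_\varepsilon]-u_0(x_0)\bigr|\le C'\varepsilon$, and Proposition~\ref{prosym3} controls the second, $\bigl|\inf_{0<r<r_0,\,i}A^\varepsilon[\eta_{i,r,\varepsilon}]-\inf_{0<r<r_0}I(r)\bigr|\le C\varepsilon$; since $I(r)=\min\{I_1(r),I_2(r)\}$, a trivial manipulation of infima gives $\inf_{0<r<r_0}I(r)=\min\{\inf_{0<r<r_0}I_1(r),\inf_{0<r<r_0}I_2(r)\}$.

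Next, because $(a,b)\mapsto\min\{a,b\}$ is $1$-Lipschitz in each variable, the two displayed bounds combine into
\[
\Bigl|u^\varepsilon(x_0,t_0)-\min\Bigl\{u_0(x_0),\ \min\bigl\{\textstyle\inf_{0<r<r_0}I_1(r),\ \inf_{0<r<r_0}I_2(r)\bigr\}\Bigr\}\Bigr|\le\max\{C,C'\}\,\varepsilon .
\]
Finally I would let $\varepsilon\to0^+$: the classical periodic homogenization theorem under (H1)--(H4) (\cite{Lions1986,Evans1989,Evans1992}) gives $u^\varepsilon(x_0,t_0)\to u(x_0,t_0)$, while the right-hand side is $\varepsilon$-independent, so the asserted formula follows. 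There is no genuine obstacle here beyond the bookkeeping; the one point deserving attention is the $\varepsilon$-independence of $r_0$, $c_{1,r}$, $c_{2,r}$, $I_1$ and $I_2$, which is immediate from their defining formulas.
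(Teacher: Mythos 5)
Your proof is correct and is essentially the argument the paper intends: the corollary follows by combining \eqref{neg.r.fi}, \eqref{aaa} and \eqref{negative} with the $\varepsilon$-independence of $u_0(x_0)$, $r_0$, $c_{i,r}$ and $I_i(r)$, and then invoking the qualitative convergence $u^\varepsilon\to u$ under (H1)--(H4). Your explicit remark that this identification of the limit is in fact needed to justify the final display of the paper's proof of Theorem \ref{simple case} is accurate bookkeeping, not a deviation.
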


\begin{rem}\label{remark4} If $V(x,y) = V(y)$ is independent of $x$, then the constants $C_K$ in \eqref{C_F} and $C_F$ in \eqref{C_F} are independent of $R$ and $T$. Therefore the convergence is uniform in $\mathbb{R}\times [0,\infty)$ and by carefully keeping track of all constants, we get
\begin{equation*}
C = 2\left(\Vert u_0'\Vert_{L^\infty(\mathbb{R})} + 4\sqrt{\max_{y\in \mathbb{T}} |V(y)|}\right).
\end{equation*}
Also Proposition \ref{ccc} is no longer needed in this case. 
\end{rem}

We provide a proof for Lemma \ref{Lemma on average - C1 version} (see \cite{Neukamm2017}) with an explicit bound in Appendix. This lemma is a quantitative version of the ergodic Theorem for periodic functions in one dimension. This is a generalized version of Lemma 4.2 in \cite{Mitake2018}.

\begin{lem}\label{Lemma on average - C1 version} If $F(x,y)\in \mathrm{C}^1(\mathbb{R}\times\mathbb{T})$ then for any real numbers $a<b$ we have
\begin{equation*}
\left|\int_a^b F\left(x,\frac{x}{\varepsilon}\right)\;dx - \int_a^b \left(\int_0^1 F(x,y)\;dy\right)\;dx\right| \leq C\varepsilon
\end{equation*}
where
\begin{align*}
C = 2\max_{x\in [a,b]} \int_0^1 |F(x,y)|\;dy + (b-a)\max_{x\in [a,b]} \int_0^1 \left|\frac{\partial F}{\partial x}(x,y)\right|\;dy.
\end{align*}
\end{lem}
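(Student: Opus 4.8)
The plan is to introduce an explicit \emph{corrector} in the fast variable that exactly absorbs the oscillation, which is the standard device behind quantitative ergodic estimates in dimension one. Set
\[
\Phi(x) := \int_0^1 F(x,z)\,dz, \qquad G(x,y) := \int_0^y \bigl(F(x,z)-\Phi(x)\bigr)\,dz .
\]
Then $G$ is $1$-periodic in $y$ because $G(x,0)=G(x,1)=0$, and since $F\in\mathrm{C}^1(\mathbb{R}\times\mathbb{T})$ one checks, differentiating under the integral sign, that $G\in\mathrm{C}^1(\mathbb{R}\times\mathbb{T})$ with
\[
\partial_y G(x,y) = F(x,y)-\Phi(x), \qquad \partial_x G(x,y) = \int_0^y \partial_x F(x,z)\,dz - y\int_0^1 \partial_x F(x,z)\,dz .
\]

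Next I would record the key algebraic identity. The map $x\mapsto G(x,x/\varepsilon)$ is $\mathrm{C}^1$ with derivative $\partial_x G(x,x/\varepsilon)+\varepsilon^{-1}\partial_y G(x,x/\varepsilon)$, so
\[
\frac{d}{dx}\Bigl(\varepsilon\,G(x,x/\varepsilon)\Bigr) = \varepsilon\,\partial_x G(x,x/\varepsilon) + F(x,x/\varepsilon) - \Phi(x).
\]
Integrating over $[a,b]$ and rearranging gives the exact formula
\[
\int_a^b F\!\left(x,\tfrac{x}{\varepsilon}\right)dx - \int_a^b \Phi(x)\,dx = \varepsilon\bigl[G(b,b/\varepsilon)-G(a,a/\varepsilon)\bigr] - \varepsilon\int_a^b \partial_x G\!\left(x,\tfrac{x}{\varepsilon}\right)dx,
\]
which reduces the whole lemma to pointwise bounds on $G$ and on $\partial_x G$.

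Those bounds come from rewriting $G$ as a convex combination: for $0\le y\le 1$,
\[
G(x,y) = (1-y)\int_0^y F(x,z)\,dz - y\int_y^1 F(x,z)\,dz ,
\]
whence $|G(x,y)|\le (1-y)\int_0^1|F(x,z)|\,dz + y\int_0^1|F(x,z)|\,dz = \int_0^1|F(x,z)|\,dz$; applying the identical estimate with $\partial_x F$ in place of $F$ gives $|\partial_x G(x,y)|\le \int_0^1|\partial_x F(x,z)|\,dz$. Substituting these into the exact formula bounds the boundary term by $2\varepsilon\max_{[a,b]}\int_0^1|F(x,z)|\,dz$ and the interior integral by $(b-a)\varepsilon\max_{[a,b]}\int_0^1|\partial_x F(x,z)|\,dz$, which is precisely $C\varepsilon$.

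The argument is essentially a computation; the only steps needing genuine care are the justification that $G\in\mathrm{C}^1$ (differentiation under the integral sign, legitimate since $\partial_x F$ is continuous and hence bounded on the relevant compact sets) so that the product-rule identity is valid, and the convex-combination rewriting of $G$, which is what yields the clean constant $2$ in front of the zeroth-order term rather than a larger numerical factor.
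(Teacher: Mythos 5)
Your proposal is correct and follows essentially the same route as the paper: the corrector $G(x,y)=\int_0^y(F(x,z)-\int_0^1F(x,\zeta)\,d\zeta)\,dz$, the product-rule identity for $\varepsilon G(x,x/\varepsilon)$, and the pointwise bounds on $G$ and $\partial_x G$ are exactly the paper's argument. Your convex-combination rewriting of $G$ is a nice explicit justification of the bound $|G(x,y)|\le\int_0^1|F(x,z)|\,dz$, which the paper asserts without detail and which is indeed what yields the constant $2$ rather than $4$.
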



The following lemma is crucial in handling the minimizer paths that correspond to nonpositive energies. We provide the proof of this lemma in Appendix.


\begin{lem}\label{key lemma} Let $\mathcal{V}\in \mathrm{C}^2\big(\R,[0,\infty)\big)$ with $\min_{x\in \R}\mathcal{V}(x) = 0$. There exists a constant $L>0$ such that $|\mathcal{V}\,'(x)|\leq L \sqrt{\mathcal{V}(x)}$ for all $x\in \R$. As a consequence, $x\mapsto \sqrt{\mathcal{V}(x)}$ is Lipschitz in $\R$.
\end{lem}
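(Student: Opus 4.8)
The plan is to reduce the global statement to a local one near the zero set of $\mathcal{V}$, since away from that set the quotient $|\mathcal{V}'(x)|/\sqrt{\mathcal{V}(x)}$ is automatically bounded by continuity of $\mathcal{V}'$ and positivity of $\sqrt{\mathcal{V}}$. So fix any point $x_0 \in [0,1]$ with $\mathcal{V}(x_0) = 0$; since $x_0$ is an interior minimum (or, after using the periodicity hypothesis $\mathcal{V}(0)=\mathcal{V}(1)$, can be treated as one by passing to the circle), we have $\mathcal{V}'(x_0) = 0$. The key input is that $\mathcal{V}\in \mathrm{C}^2$, so by Taylor's theorem with integral remainder, for $x$ near $x_0$ we have $\mathcal{V}'(x) = \mathcal{V}'(x_0) + \int_{x_0}^x \mathcal{V}''(t)\,dt = \int_{x_0}^x \mathcal{V}''(t)\,dt$, hence $|\mathcal{V}'(x)| \leq M|x-x_0|$ where $M = \max_{[0,1]}|\mathcal{V}''|$.

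Next I would get a matching lower bound on $\mathcal{V}(x)$ in terms of $|x-x_0|^2$, but here one must be careful: $\mathcal{V}$ could vanish to higher order, so $\mathcal{V}(x) \geq c|x-x_0|^2$ may fail. The correct estimate goes the other way and is all that is needed. Using $\mathcal{V}(x) = \int_{x_0}^x \mathcal{V}'(t)\,dt$ together with $|\mathcal{V}'(t)| \leq M|t-x_0|$ from the previous step, we obtain $\mathcal{V}(x) \leq \frac{M}{2}|x-x_0|^2$ — wait, that is an upper bound. Instead, the standard trick is: since $\mathcal{V}\geq 0$ attains its minimum $0$ at $x_0$, the function $\mathcal{V}$ is, near $x_0$, squeezed between $0$ and $\frac{M}{2}(x-x_0)^2$; and the elementary inequality for a nonnegative $\mathrm{C}^2$ function $g$ with $g \geq 0$ everywhere, $\left(g'(x)\right)^2 \leq 2\,\|g''\|_\infty\, g(x)$, holds \emph{globally} (this is the well-known fact that a nonnegative $\mathrm{C}^{1,1}$ function has $|g'|\leq \sqrt{2\|g''\|_\infty}\sqrt{g}$). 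I would prove this inequality directly: for any $x$ and any $h$, $0 \leq g(x+h) \leq g(x) + g'(x)h + \frac{M}{2}h^2$ with $M = \|g''\|_\infty$; viewing the right side as a quadratic in $h$ that is everywhere nonnegative forces its discriminant to be $\leq 0$, i.e. $\left(g'(x)\right)^2 - 2M g(x) \leq 0$, which is exactly $|g'(x)| \leq \sqrt{2M}\sqrt{g(x)}$.

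Applying this with $g = \mathcal{V}$, $M = \max_{[0,1]}|\mathcal{V}''|$, gives $|\mathcal{V}'(x)| \leq \sqrt{2M}\,\sqrt{\mathcal{V}(x)}$ for all $x \in [0,1]$, so we may take $L = \sqrt{2\max_{[0,1]}|\mathcal{V}''|}$. For the consequence, write $\phi(x) = \sqrt{\mathcal{V}(x)}$; wherever $\mathcal{V}(x) > 0$, $\phi$ is differentiable with $|\phi'(x)| = \frac{|\mathcal{V}'(x)|}{2\sqrt{\mathcal{V}(x)}} \leq \frac{L}{2}$, and at points where $\mathcal{V} = 0$ the difference quotients of $\phi$ are controlled by the same bound via $|\phi(x)-\phi(y)| = |\phi(x)| = \sqrt{\mathcal{V}(x)} \leq \frac{1}{\sqrt{2M}}|\mathcal{V}'(\xi)| \cdot(\ldots)$ — more cleanly, since $\phi$ is continuous on $[0,1]$ and locally Lipschitz with constant $\le L/2$ on the open set $\{\mathcal{V}>0\}$, and $[0,1]\setminus\{\mathcal{V}>0\}$ is closed, a standard connectedness argument (integrating $\phi'$ across each maximal subinterval of $\{\mathcal{V}>0\}$ and using continuity at the endpoints where $\phi = 0$) shows $\phi$ is Lipschitz on all of $[0,1]$ with constant $L/2$. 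The main obstacle is purely expository: making the last "glue across the zero set" step rigorous without the higher-order vanishing of $\mathcal{V}$ causing trouble; the discriminant inequality is what sidesteps any case analysis on the order of the zero, so I would lean on it as the heart of the argument.
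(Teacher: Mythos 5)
Your argument is correct in substance but takes a genuinely different route from the paper. The paper proves the pointwise bound $|\mathcal{V}'| \leq L\sqrt{\mathcal{V}}$ by contradiction: assuming $|\mathcal{V}'(x_k)|/\sqrt{\mathcal{V}(x_k)} \to \infty$ along a sequence, it applies the mean value theorem twice (once to bound $\mathcal{V}(x_k)$ from below by $\tfrac12 h_k a_k$ on an interval where $\mathcal{V}'$ stays above $h_k/2$, once to produce a point where $\mathcal{V}''$ blows up), then uses compactness of $[0,1]$ to get a uniform constant; the resulting $L$ is not explicit. You instead invoke the classical Glaeser--Landau discriminant inequality: $0 \leq \mathcal{V}(x+h) \leq \mathcal{V}(x) + \mathcal{V}'(x)h + \tfrac{M}{2}h^2$ for all $h$ forces $(\mathcal{V}'(x))^2 \leq 2M\mathcal{V}(x)$. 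This is shorter, avoids the compactness/contradiction machinery, and yields the explicit constant $L = \sqrt{2\,\Vert\mathcal{V}''\Vert_{L^\infty}}$; your final gluing step for the Lipschitz continuity of $\sqrt{\mathcal{V}}$ is essentially equivalent to the paper's regularization $f_\varepsilon = \sqrt{\mathcal{V}+\varepsilon}$.

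One caveat, which you partially flagged with the phrase ``passing to the circle'': your discriminant argument evaluates the quadratic at $h^* = -\mathcal{V}'(x)/M$, and if $x+h^* \notin [0,1]$ this requires $\mathcal{V}$ to extend to a nonnegative $\mathrm{C}^{1,1}$ function beyond the endpoints. The hypothesis $\mathcal{V}(0)=\mathcal{V}(1)$ alone does not give a $\mathrm{C}^{1,1}$ periodic extension (one would also need $\mathcal{V}'(0)=\mathcal{V}'(1)$), and indeed the stated inequality can fail at an endpoint: $\mathcal{V}(x)=\sin^2(\pi x)+x(1-x)$ satisfies all the stated hypotheses yet has $\mathcal{V}(0)=0$, $\mathcal{V}'(0)=1$. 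This is not a defect specific to your proof --- the paper's own argument has the same gap at $a\in\{0,1\}$ --- and it is harmless in the intended application, where $\mathcal{V}$ is the restriction of a nonnegative $\mathrm{C}^2$ function defined on a neighborhood of $[0,1]$, so every point is interior and your inequality applies verbatim.
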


\section{General strictly convex Hamiltonians setting}

\subsection{Setting and simplifications} 
Similarly to the proof of Theorem \ref{simple case}, we can assume $C_0=0$ and $V\in \mathrm{C}^2(\mathbb{R}\times \mathbb{T})$. We have the following estimate (\cite{Le2017}):
\begin{equation}\label{grad.bound}
\Vert u^\varepsilon_t\Vert_{L^\infty} + \Vert Du^\varepsilon\Vert_{L^\infty}\leq M
\end{equation}
in the viscosity sense for all $\varepsilon>0$. Since values of $H(p)$ for $|p| > M$ are irrelevant. This fact together with $H(0) = H'(0) = 0$ allows us to assume that
\begin{equation}\label{quad.grth.H}
\max\left\lbrace \frac{|p|^2}{2} - K_0, \frac{|p|^2}{2} - K_0 |p|\right\rbrace \leq H(p) \leq \min\left\lbrace \frac{|p|^2}{2} + K_0, \frac{|p|^2}{2} + K_0|p|  \right\rbrace
\end{equation}
for all $p\in \mathbb{R}$ and for some $K_0 > 0$.  Let $L(v) = \sup_{p\in \mathbb{R}} \Big(p\cdot v - H(p)\Big)$ for $v\in \mathbb{R}^n$ be the Legendre transform of $H$, then $L$ is $\mathrm{C}^2$ and strictly convex, $L(v) > L(0) = 0$ for $v\neq 0$ as well as $L(0) = L'(0) = 0$, and
\begin{equation}\label{quad.grth.L}
\max\left\lbrace \frac{|v|^2}{2} - K_0, \frac{|v|^2}{2}-K_0|v|\right\rbrace \leq L(v) \leq \min \left\lbrace \frac{|v|^2}{2} + K_0, \frac{|v|^2}{2} + K_0|v|\right\rbrace
\end{equation}
for $v\in \mathbb{R}^n$. Denote:
\begin{equation*}
\begin{cases}
H^{-1}_1&:= \left(H|_{[0,\infty)}\right)^{-1}\\
(L'_1)^{-1}&:= \left(L'|_{[0,\infty)}\right)^{-1}\\
\tilde{G}_1&:=(L'_1)^{-1}\circ H_1^{-1}
\end{cases} \qquad\text{and}\qquad \begin{cases}
H_2^{-1}&: = \left(H|_{0,+\infty)}\right)^{-1}\\
(L'_2)^{-1}&:= \left(L'|_{(-\infty,0]}\right)^{-1}\\
\tilde{G}_2&:=(L'_2)^{-1}\circ H_2^{-1}.
\end{cases}
\end{equation*}
We have $H_i' = (L_i)^{-1}$ and thus $\tilde{G}_i \equiv G_i$ for $i=1,2$ where $G_i$ are defined in the statement of Theorem \ref{general case}. We see that $x\mapsto |G_i(x)|$ is increasing on $[0,\infty)$, $x\mapsto (L'_i)^{-1}(x)$ is increasing for $i=1,2$ and
for all $x\geq K_0$ then
\begin{equation}\label{G_i.rate}
\begin{cases}
\displaystyle \;\;\; (1/\sqrt{2}) \sqrt{x-K_0}\leq G_1(x) \leq 2K_0 + 2\sqrt{2(x+K_0)},\vspace*{0.2cm}\\
\displaystyle -(1/\sqrt{2})\sqrt{x-K_0} \geq G_2(x) \geq 2K_0 - 2\sqrt{2(x+K_0)}.
\end{cases}
\end{equation}
As a consequence, we have $|G_i(x)| \rightarrow +\infty$ as $x\rightarrow \infty$ for $i=1,2$.
\subsection{Sketch of the proof of Theorem \ref{general case}}

For $\varepsilon>0$ and $R,T>0$, let us fix $(x_0,t_0)\in [-R,R]\times [0,T]$. Thanks to the optimal control formula we have
\begin{equation}\label{ge.u^eps.def}
u^\varepsilon(x_0,t_0)= \inf_{\eta(\cdot)\in \mathcal{T}} \left\lbrace \varepsilon\int_0^{\varepsilon^{-1}t_0} \Big(L\left(\dot{\eta}(s)\right) - V\left(\varepsilon \eta(s),\eta(s)\right)\Big)ds + u_0\left(\varepsilon\eta(\varepsilon^{-1}t_0)\right)\right\rbrace
\end{equation}
where $\mathcal{T} = \left\lbrace \eta(\cdot)\in \mathrm{AC}\left(\left[0,\varepsilon^{-1}t_0\right]\right), \varepsilon\eta(0) = x_0\right\rbrace$. For each mininmizer $\eta_\varepsilon(\cdot)\in \mathcal{T}$ to \eqref{ge.u^eps.def}, there exists $r = r(\eta_\varepsilon) \in \left[V(0,0),+\infty\right)$ such that
\begin{equation}\label{ge.H=r}
H\big(L'(\dot{\eta}_\varepsilon(s))\big) + V\left(\varepsilon\eta_\varepsilon(s),\eta_\varepsilon(s)\right) = r
\end{equation}
for all $s\in(0,\varepsilon^{-1}t_0)$. For $r\in \left[V(0,0),\infty\right)$ we have the Euler--Lagrange equation 
\begin{equation}\label{ge.full-E-L}
\begin{cases}
L''\big(\dot{\eta}_\varepsilon(s)\big)\ddot{\eta}_\varepsilon(s) = -\nabla V\big(\varepsilon\eta_\varepsilon(s),\eta_\varepsilon(s)\big)\cdot(\varepsilon,1)\qquad\text{on}\qquad \left(0,\varepsilon^{-1}t_0\right), \\
\qquad\qquad\, \dot{\eta}_\varepsilon(0) = G_i\big(r-V(x_0,\varepsilon^{-1}x_0)\big),\\
\qquad\qquad\,\eta_\varepsilon(0) =  \varepsilon^{-1} x_0.
\end{cases} \tag{E-L}
\end{equation}
where $i=1,2$. For simplicity, let us define the following action functional
\begin{equation*}
A^\varepsilon[\eta] = \varepsilon\int_0^{\varepsilon^{-1}t_0} \Big(L\left(\dot{\eta}(s)\right) - V\left(\varepsilon \eta(s),\eta(s)\right)\Big)ds + u_0\left(\varepsilon\eta(\varepsilon^{-1}t_0)\right)
\end{equation*}
for $\eta(\cdot)\in \mathcal{T}$. Thanks to \eqref{ge.H=r}, the optimization problem \eqref{ge.u^eps.def} is equivalent to
\begin{equation}\label{ge.u^e.def.e:1}
u^\varepsilon(x_0,t_0) = \inf_{r} \Big\lbrace A^\varepsilon[\eta_\varepsilon]:\text{among all}\; \eta_\varepsilon(\cdot)\;\text{solve}\;\eqref{ge.full-E-L}\;\text{with energy}\; r\; \Big\rbrace.
\end{equation}
For an interval $I\subset\mathbb{R}$ we denote by $\inf_{r\in I} A^\varepsilon[\eta_\varepsilon]$ the infimum over all solutions $\eta_\varepsilon(\cdot)$ that solve \eqref{ge.full-E-L} and with all energies $r\in I$. We proceed as follows:

\begin{enumerate}
\item[1.] Estimate for $r\leq 0$ with rate $\mathcal{O}(\varepsilon)$ (Proposition \ref{prop r<0}). 
\item[2.] There is $r_0>0$ such that we can ignore $r\geq r_0$ in \eqref{ge.u^e.def.e:1} (Proposition \ref{ignore}).
\item[3.] For $0<r<r_0$, $A^\varepsilon[\eta_\varepsilon]$ can be written as in \eqref{ge.A^eps.formula}, then we proceed to get estimates for each individual term by using an quantitative ergodic theorem (Propositions \ref{asymp0}, \ref{asymp1} and \ref{asymp3}).
\end{enumerate}

\begin{prop}\label{prop r<0} If $r\leq 0$ then 
\begin{equation}\label{ge.neg.r.fi}
\left|\inf_{r\leq 0} A^\varepsilon[\eta_\varepsilon] - u_0(x_0)\right| \leq \Big(H_1^{-1}\left(\Vert V\Vert_{L^\infty}\right)+ \Vert u_0'\Vert_{L^\infty}\Big)\varepsilon.
\end{equation}
\end{prop}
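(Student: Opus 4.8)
The plan is to mimic the structure of the proof of Proposition \ref{ccc} from the classical mechanics case, replacing the explicit quadratic quantities by their analogues expressed through the functions $G_i$. The key point is that for $r \le 0$ the conserved energy relation \eqref{ge.H=r} forces $H\big(L'(\dot\eta_\varepsilon(s))\big) = r - V(\varepsilon\eta_\varepsilon(s),\eta_\varepsilon(s)) \le -V(\varepsilon\eta_\varepsilon(s),\eta_\varepsilon(s))$, and since $V \le 0$ by (A1), both sides are nonnegative so we may apply $H_i^{-1}$ to obtain $\dot\eta_\varepsilon(s) = G_i\big(r - V(\varepsilon\eta_\varepsilon(s),\eta_\varepsilon(s))\big)$ with $i=1$ (increasing branch) or $i=2$ (decreasing branch). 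First I would record that, by assumption (A1), there exists $y_0$ with $V(x,y_0) = 0$ for all $x$, so by $\Z$-periodicity of $V(x,\cdot)$ there are nearest zeros $\underline y_0 \le \varepsilon^{-1}x_0 \le \overline y_0$ of $y \mapsto V(x,y)$ on either side, within distance $1$.

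Next I would prove the confinement claim $\underline y_0 \le \eta_\varepsilon(s) \le \overline y_0$ for all $s \in [0,\varepsilon^{-1}t_0]$, exactly as in the classical case but using Lemma \ref{key lemma} applied to $\mathcal V(y) := -V(\varepsilon y, y)$ restricted to the appropriate unit interval (which is $\mathrm C^2$, nonnegative, vanishes somewhere, and takes equal values at the endpoints by periodicity). Lemma \ref{key lemma} gives that $y \mapsto \sqrt{-V(\varepsilon y,y)}$ is Lipschitz, hence $\int^{\overline y_0} dy / \sqrt{-V(\varepsilon y,y)} = +\infty$, so the comparison curves $\gamma_\pm$ solving $\dot\gamma_\pm = \pm G_i(-V(\varepsilon\gamma_\pm,\gamma_\pm))$ (using $|G_i(x)| \le$ const$\cdot\sqrt{x}$ near $0$ from \eqref{G_i.rate}, or more precisely $G_1(x) \sim \sqrt{2x}$, $G_2(x)\sim -\sqrt{2x}$ as $x \to 0^+$) cannot exit the unit cell in finite time; a standard ODE comparison then traps $\eta_\varepsilon$ between $\gamma_-$ and $\gamma_+$, giving the claim. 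One subtlety: the rate of $G_i$ near $0$ must dominate $\sqrt{-V}$ in the same Lipschitz-square-root sense, but since $G_i(x)/\sqrt{x}$ is bounded near $0$ by \eqref{G_i.rate}, the integrand $1/|G_i(-V)|$ is comparable to $1/\sqrt{-V}$ up to constants, so the divergence of the travel-time integral is preserved — this is where \eqref{G_i.rate} is invoked.

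With confinement in hand, the lower bound is immediate: for any $\eta_\varepsilon$ solving \eqref{ge.full-E-L} with $r \le 0$ we have $L(\dot\eta_\varepsilon) \ge 0$ and $-V \ge 0$, so $A^\varepsilon[\eta_\varepsilon] \ge u_0(\varepsilon\eta_\varepsilon(\varepsilon^{-1}t_0)) \ge u_0(x_0) - \Vert u_0'\Vert_{L^\infty}\varepsilon$, using $|\varepsilon\eta_\varepsilon(\varepsilon^{-1}t_0) - x_0| = |\varepsilon\eta_\varepsilon(\varepsilon^{-1}t_0) - \varepsilon\eta_\varepsilon(0)| \le \varepsilon|\overline y_0 - \underline y_0| \le \varepsilon$ from confinement. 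For the matching upper bound I would test with $\eta_\varepsilon = \gamma_+$ (the curve of energy $r=0$): then $H(L'(\dot\gamma_+)) = -V(\varepsilon\gamma_+,\gamma_+)$, so $L'(\dot\gamma_+) = H_1^{-1}(-V)$, and by the Legendre identity $L(\dot\gamma_+) = \dot\gamma_+ \cdot L'(\dot\gamma_+) - H(L'(\dot\gamma_+)) = G_1(-V)\,H_1^{-1}(-V) + V$. Changing variables $dx = \dot\gamma_+\,ds = G_1(-V(\varepsilon\gamma_+,\gamma_+))\,ds$ and bounding $H_1^{-1}(-V(\varepsilon x,x)) \le H_1^{-1}(\Vert V\Vert_{L^\infty})$ together with $\varepsilon\big(\gamma_+(\varepsilon^{-1}t_0) - \gamma_+(0)\big) \le \varepsilon$ again (confinement within one cell), one gets $A^\varepsilon[\gamma_+] \le u_0(x_0) + \big(H_1^{-1}(\Vert V\Vert_{L^\infty}) + \Vert u_0'\Vert_{L^\infty}\big)\varepsilon$; combining with the lower bound yields \eqref{ge.neg.r.fi}. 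The main obstacle is the confinement argument: one must verify that Lemma \ref{key lemma} applies to $y \mapsto -V(\varepsilon y,y)$ on a unit interval with matching endpoint values (requiring care since $\varepsilon y$ drifts, but on an interval of length $1$ the composite is still $\mathrm C^2$ and the endpoints of the $y$-interval map to endpoints where $V$ vanishes by choice of $\underline y_0,\overline y_0$), and that the $G_i$-travel-time integral genuinely diverges — this is precisely where \eqref{G_i.rate} and the square-root Lipschitz bound are both essential.
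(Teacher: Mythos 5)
Your overall architecture is exactly the paper's: confinement of any nonpositive-energy minimizer to the unit cell between the nearest zeros of $V(x,\cdot)$ guaranteed by (A1), the lower bound $A^\varepsilon[\eta_\varepsilon]\ge u_0(\varepsilon\eta_\varepsilon(\varepsilon^{-1}t_0))\ge u_0(x_0)-\Vert u_0'\Vert_{L^\infty}\varepsilon$, and the upper bound by testing with the zero-energy curve $\gamma_+$ together with the identities $L(\dot\gamma_+)-V=\dot\gamma_+L'(\dot\gamma_+)$ and $L'\big(G_1(-V)\big)=H_1^{-1}(-V)$, followed by the change of variables and the fact that $\gamma_+$ travels a distance at most $1$. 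This reproduces the constant $H_1^{-1}(\Vert V\Vert_{L^\infty})+\Vert u_0'\Vert_{L^\infty}$ exactly as in the paper.

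The one step that is mis-justified is the divergence of the travel time $\int^{\overline y_0} dy/|G_i(-V(\varepsilon y,y))|=+\infty$. You invoke \eqref{G_i.rate} for the behavior of $G_i$ near $0$, but \eqref{G_i.rate} is stated only for $x\ge K_0$ and says nothing near the origin; moreover the asymptotic $G_1(x)\sim\sqrt{2x}$ is false in general (for $H(p)=|p|^\gamma$ with $\gamma>2$ one has $G_1(x)=\gamma x^{1-1/\gamma}=o(\sqrt x)$). This is precisely the point where the general case departs from the classical mechanics case, and it is what assumption (A0) and Lemma \ref{key lemma 2} were introduced for: under (A0), $y\mapsto G_i(\mathcal V(y))$ is Lipschitz, which is the substitute for the Lipschitz continuity of $\sqrt{\mathcal V}$ used in Proposition \ref{ccc}. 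Your variant is repairable, since only the one-sided bound $|G_i(x)|\le C\sqrt x$ near $0$ is actually needed (so that $1/|G_i(-V)|\ge c/\sqrt{-V}$ and Lemma \ref{key lemma} forces the integral to diverge), and that bound is true --- it follows from (A0) by integrating $|G_i'(x)|\le C/\sqrt x$, or directly from $H\in\mathrm{C}^2$ and convexity via $H(p)\ge H'(p)^2/\big(2\sup_{[0,p]}H''\big)$ --- but it does not come from \eqref{G_i.rate}, and your stronger claim that $1/|G_i(-V)|$ is \emph{comparable} to $1/\sqrt{-V}$ would require a matching lower bound $|G_i(x)|\ge c\sqrt x$, which holds only under additional hypotheses such as $H''(0)>0$ (part (ii) of Lemma \ref{key lemma 2}). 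With the citation corrected to Lemma \ref{key lemma 2} (or with the $\mathrm C^2$ argument above supplied), the proof coincides with the paper's.
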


\begin{proof}[Sketch of the proof] The proof is similar to Proposition \ref{ccc} where the crucial Lemma \ref{key lemma} is replaced with Lemma \ref{key lemma 2}.
\end{proof}

For each $r\in \left(0,\infty\right)$, \eqref{ge.full-E-L} has exactly two distinct solutions $\eta_{1,r,\varepsilon}(\cdot)$ and $\eta_{2,r,\varepsilon}(\cdot)$ thanks to the conservation of energy \eqref{ge.H=r}. They are
\begin{equation}\label{ge.p.1st.case}
\begin{cases}
\dot{\eta}_{\varepsilon}(s) &= G_i\big(r-V\big(\varepsilon \eta_{\varepsilon}(s),\eta_{\varepsilon}(s)\big)\big) \qquad\text{on}\qquad (0,\varepsilon^{-1}t_0),\\\eta_{\varepsilon}(0) &= \varepsilon^{-1}x_0,
\end{cases}
\end{equation}
for $i=1,2$ respectively.
Let us consider the first case $\eta_\varepsilon(\cdot)$ solves \eqref{ge.p.1st.case} with $i=1$ since the other case is similar. Since $\dot{\eta}_{\varepsilon}(s)> 0$ for all $s\geq 0$, we have
\begin{equation}\label{ge.pa.ch.var}
t_0 = \varepsilon\int_0^{\varepsilon^{-1}t_0} \frac{\dot{\eta}_{\varepsilon}(s)}{\dot{\eta}_{\varepsilon}(s)}\;ds  = \varepsilon \int_{\eta_\varepsilon(0)}^{\eta_{\varepsilon}(\varepsilon^{-1}t_0)} \frac{dx}{G_1\big(r-V(\varepsilon x,x)\big)}.
\end{equation}
Let $\varepsilon\rightarrow 0$ we deduce that $\eta_{\varepsilon}\left(\varepsilon^{-1}t_0\right)\rightarrow +\infty$. It is also clear from \eqref{ge.p.1st.case} that
\begin{equation}\label{ge.bound}
t_0G_1(r)\leq \varepsilon \eta_{\varepsilon}(\varepsilon^{-1}t_0) - \varepsilon \eta_\varepsilon(0) \leq t_0G_1\big(r+\max|V|\big).
\end{equation}

\begin{prop}\label{ignore} There exists $r_0>0$ depends on $\mathrm{Lip}(u_0)$ and $H(p)$ such that
\begin{equation}\label{ge.u^e.def.e:1.5}
\inf_{r\geq r_0} A^\varepsilon[\eta_\varepsilon] = \inf_{r\geq r_0}\Big\lbrace A^\varepsilon[\eta_{1,r,\varepsilon}], A^\varepsilon[\eta_{2,r,\varepsilon}]\Big\rbrace \geq u^\varepsilon(x_0,t_0) + t_0.
\end{equation}
\end{prop}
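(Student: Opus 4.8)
The plan is to follow the template of the proof of Proposition~\ref{cccx}, producing two estimates: (i) a lower bound for $A^\varepsilon[\eta_\varepsilon]$ that grows with the energy $r$ whenever $\eta_\varepsilon$ is one of the two solutions $\eta_{1,r,\varepsilon},\eta_{2,r,\varepsilon}$ of \eqref{ge.p.1st.case} with energy $r>0$, and (ii) the uniform upper bound $u^\varepsilon(x_0,t_0)\le u_0(x_0)+\overline C\,t_0$ coming from an explicit linear-in-time supersolution; then one fixes $r_0$ so large that (i) beats (ii) by $t_0$ for every $r\ge r_0$. The identity $\inf_{r\ge r_0}A^\varepsilon[\eta_\varepsilon]=\inf_{r\ge r_0}\{A^\varepsilon[\eta_{1,r,\varepsilon}],A^\varepsilon[\eta_{2,r,\varepsilon}]\}$ is automatic because for each $r>0$ these are exactly the solutions of \eqref{ge.full-E-L}, and by the symmetry between the two families it suffices to treat the increasing one ($i=1$, $\dot\eta_\varepsilon>0$); the decreasing case is identical up to sign changes.

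For the lower bound, let $\eta_\varepsilon$ solve \eqref{ge.p.1st.case} with $i=1$ and $r>0$. Since $V\le 0$ by (A1), $A^\varepsilon[\eta_\varepsilon]\ge \varepsilon\int_0^{\varepsilon^{-1}t_0}L(\dot\eta_\varepsilon(s))\,ds+u_0\big(\varepsilon\eta_\varepsilon(\varepsilon^{-1}t_0)\big)$. From the conservation law \eqref{ge.H=r}, monotonicity of $G_1$ on $[0,\infty)$, and $r-V\ge r$ we get $\dot\eta_\varepsilon(s)=G_1\big(r-V(\varepsilon\eta_\varepsilon(s),\eta_\varepsilon(s))\big)\ge G_1(r)$, while \eqref{G_i.rate} gives $G_1(r)\ge (1/\sqrt2)\sqrt{r-K_0}$ for $r\ge K_0$. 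Combining this with the lower quadratic growth in \eqref{quad.grth.L}, namely $L(v)\ge |v|^2/2-K_0|v|\ge |v|^2/4$ as soon as $|v|\ge 4K_0$, one gets, for every $r$ with $G_1(r)\ge 4K_0$ (which holds once $r\ge K_0+32K_0^2$), that $L(\dot\eta_\varepsilon(s))\ge G_1(r)^2/4\ge (r-K_0)/8$ for all $s$, hence
\begin{equation*}
\varepsilon\int_0^{\varepsilon^{-1}t_0}L(\dot\eta_\varepsilon(s))\,ds\ \ge\ \frac{(r-K_0)\,t_0}{8}.
\end{equation*}
On the other hand, \eqref{ge.bound} together with \eqref{G_i.rate} give $|\varepsilon\eta_\varepsilon(\varepsilon^{-1}t_0)-x_0|\le t_0\,G_1(r+\max|V|)\le t_0\big(2K_0+2\sqrt{2(r+\max|V|+K_0)}\big)$ for $r\ge K_0$, so, since $u_0\in\mathrm{Lip}(\mathbb{R})$,
\begin{equation*}
A^\varepsilon[\eta_\varepsilon]\ \ge\ u_0(x_0)+t_0\Big[\tfrac{r-K_0}{8}-\mathrm{Lip}(u_0)\big(2K_0+2\sqrt{2(r+\max|V|+K_0)}\big)\Big].
\end{equation*}

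For the upper bound and the conclusion, observe that by (H3) (which, since $H(p)$ is locally bounded, forces $V$ bounded) the constant $\overline C:=\sup\{|H(x,y,p)|:(x,y)\in\mathbb{R}\times\mathbb{T},\ |p|\le \mathrm{Lip}(u_0)\}$ is finite, and $\overline u(x,t)=u_0(x)+\overline C\,t$ is a viscosity supersolution of \eqref{C_varepsilon} with $\overline u(\cdot,0)=u_0$; the comparison principle then yields $u^\varepsilon(x_0,t_0)\le u_0(x_0)+\overline C\,t_0$, exactly as in Proposition~\ref{cccx}. The bracket in the displayed lower bound is of the form (linear in $r$)$-\,O(\sqrt r)$, hence tends to $+\infty$; choosing $r_0\ge K_0+32K_0^2$ so large that this bracket is $\ge \overline C+1$ for all $r\ge r_0$ (this $r_0$ depends only on $\mathrm{Lip}(u_0)$, $H$ and $\max|V|$), one gets, for any minimizer of \eqref{ge.u^e.def.e:1} with energy $r\ge r_0$, the chain $A^\varepsilon[\eta_\varepsilon]\ge u_0(x_0)+(\overline C+1)t_0\ge u^\varepsilon(x_0,t_0)+t_0$, which is \eqref{ge.u^e.def.e:1.5}.

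The only step that is not a direct transcription of the quadratic case is the lower bound on $\varepsilon\int L(\dot\eta_\varepsilon)\,ds$, and this is where I expect the (mild) difficulty to sit. In the quadratic setting conservation of energy makes $A^\varepsilon$ literally $r t_0$ plus a nonnegative integral plus the endpoint value, so the $r$-growth is free. For a general strictly convex $H$, the dual identity $L(\dot\eta_\varepsilon)-V=\dot\eta_\varepsilon L'(\dot\eta_\varepsilon)-r$ together with $\dot\eta_\varepsilon L'(\dot\eta_\varepsilon)=pH'(p)\ge H(p)=r-V$ (with $p=L'(\dot\eta_\varepsilon)$) only gives $A^\varepsilon[\eta_\varepsilon]\ge u_0(\varepsilon\eta_\varepsilon(\varepsilon^{-1}t_0))$, with \emph{no} gain in $r$, because $pH'(p)$ need not be comparable to $H(p)$ for a convex $H$ that is merely superlinear. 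Recovering the $r$-growth therefore genuinely requires the quadratic bounds \eqref{quad.grth.H}--\eqref{quad.grth.L} and the matching rate estimates \eqref{G_i.rate}; handling those, and verifying that the resulting $r_0$ depends only on the admissible data, is the routine bookkeeping part of the argument.
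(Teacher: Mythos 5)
Your proof is correct and follows essentially the same route as the paper: the supersolution comparison $u^\varepsilon(x_0,t_0)\le u_0(x_0)+\overline C t_0$, plus a lower bound on $A^\varepsilon[\eta_{i,r,\varepsilon}]$ that grows linearly in $r$ via the conservation law, the monotonicity of $G_1$, and the quadratic bounds \eqref{quad.grth.L} and \eqref{G_i.rate}. The only (harmless) difference is that you bound $L(\dot\eta_\varepsilon(s))\ge G_1(r)^2/4$ pointwise in $s$ and integrate in time, whereas the paper changes variables to $x=\eta_\varepsilon(s)$, uses $L(v)/v\ge v/2-K_0$, and factors out the displacement $\varepsilon\eta_\varepsilon(\varepsilon^{-1}t_0)-\varepsilon\eta_\varepsilon(0)$ before invoking \eqref{ge.bound}.
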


\begin{proof}[Sketch of the proof] The proof is similar to Proposition \ref{cccx} where we utilize the fact that $G_1$ is increasing and satisfies \eqref{G_i.rate}.
\end{proof}

With \eqref{ge.u^e.def.e:1.5}, the optimization problem \eqref{ge.u^e.def.e:1} can be reduced to
\begin{equation}\label{ge.aaa}
u^\varepsilon(x_0,t_0) = \min \left\lbrace \inf_{r\leq 0} A^\varepsilon[\eta_\varepsilon],\inf_{0<r<r_0} A^\varepsilon[\eta_{1,r,\varepsilon}],\inf_{0<r<r_0} A^\varepsilon[\eta_{2,r,\varepsilon}] \right\rbrace.
\end{equation}
Let $\eta_\varepsilon = \eta_{1,r,\varepsilon}$, we have $L\left(\dot{\eta}_{\varepsilon}(s)\right) -V\big(\varepsilon\eta_{\varepsilon}(s),\eta_{\varepsilon}(s)\big) = -r + \dot{\eta}_{\varepsilon}(s) L'\big(\dot{\eta}_{\varepsilon}(s)\big)$. From that and \eqref{ge.p.1st.case} we can rewrite the action functional as 
\begin{equation}\label{ge.A^eps.formula}
A^\varepsilon[\eta_{\varepsilon}] = -rt_0 + \varepsilon\int_{\eta_\varepsilon(0)}^{\eta_\varepsilon(\varepsilon^{-1}t_0)} H_1^{-1}\big(r-V\big(\varepsilon x,x\big)\big)dx + u_0\big(\varepsilon\eta_{\varepsilon}(\varepsilon^{-1}t_0)\big).
\end{equation}
Define $I_0 = I_0(T,R) = [-R,c_0+R]$ where $c_0 = TG_1(r_0+\Vert V\Vert_{L^\infty})$. Since \eqref{ge.bound} is true for all $0<r<r_0$ and $(x_0,t_0)\in [-R,R]\times [0,T]$ we have $\varepsilon \eta_\varepsilon(\varepsilon^{-1}t_0) \in I_0$. Let $c_{1,r}$ and $c_{2,r}$ be unique numbers such that
\begin{equation}\label{ge.def.c_r}
\int_{x_0}^{c_{1,r}}\int_0^1 \frac{dydx}{G_1(r-V(x,y))} =\int_{c_{2,r}}^{x_0}\int_0^1 \frac{dydx}{G_2(r-V(x,y))} = t_0.
\end{equation}

\begin{prop}\label{asymp1} For $0<r< r_0$ we have
\begin{equation}\label{ge.bound-1}
\left|\varepsilon\eta_\varepsilon(\varepsilon^{-1}t_0) - c_{1,r}\right| \leq C_K\varepsilon.
\end{equation}
where $C_K = C_K(R,T,H,V)$ is a constant independent of $r$.
\end{prop}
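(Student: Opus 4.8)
The plan is to follow the argument of Proposition~\ref{eps.eta(eps^-1)-c_r} from the classical mechanics case, replacing the weight $\frac{1}{\sqrt{2(r-V)}}$ by $\frac{1}{G_1(r-V)}$, and to invoke the technical assumptions (A2) and (A4) precisely where the separable structure $V=a(x)b(y)$ was previously exploited. Only the case $\eta_\varepsilon=\eta_{1,r,\varepsilon}$ (so $\dot\eta_\varepsilon>0$ and $G_1$ governs the flow) need be treated; the case of $\eta_{2,r,\varepsilon}$ and $c_{2,r}$ is symmetric, using $G_2$ and $i=2$ in (A2), (A4).

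First I would set $\mathcal{K}_r(x,y)=\frac{1}{G_1(r-V(x,y))}$ for $(x,y)\in I_0\times\mathbb{T}$. Since (after the reduction of \S3.1) $V\in\mathrm{C}^2$, $G_1\in\mathrm{C}^1(0,\infty)$ and $r-V(x,y)\geq r>0$, the function $\mathcal{K}_r$ is $\mathrm{C}^1$ on $I_0\times\mathbb{T}$, and since $G_1$ is increasing with $-\Vert V\Vert_{L^\infty}\leq -V\leq\Vert V\Vert_{L^\infty}$ one has $\frac{1}{G_1(r_0+\Vert V\Vert_{L^\infty})}\leq\mathcal{K}_r\leq\frac{1}{G_1(r)}$ on $I_0\times\mathbb{T}$. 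The change of variables $x\mapsto\varepsilon x$ in \eqref{ge.pa.ch.var}, together with the first identity in \eqref{ge.def.c_r}, gives the two representations $t_0=\int_{x_0}^{\varepsilon\eta_\varepsilon(\varepsilon^{-1}t_0)}\mathcal{K}_r(x,x/\varepsilon)\,dx=\int_{x_0}^{c_{1,r}}\int_0^1\mathcal{K}_r(x,y)\,dy\,dx$. Using \eqref{ge.bound} and the above lower bound on $\mathcal{K}_r$ one checks that both $\varepsilon\eta_\varepsilon(\varepsilon^{-1}t_0)$ and $c_{1,r}$ lie in $I_0$ and that the length of $[x_0,\varepsilon\eta_\varepsilon(\varepsilon^{-1}t_0)]$ is at most $c_0=TG_1(r_0+\Vert V\Vert_{L^\infty})$. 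Then I would apply the quantitative ergodic theorem Lemma~\ref{Lemma on average - C1 version} to $\mathcal{K}_r$ on that interval and subtract the two representations of $t_0$, obtaining $m_r\,|\varepsilon\eta_\varepsilon(\varepsilon^{-1}t_0)-c_{1,r}|\leq(2M_r+c_0 N_r)\varepsilon$, where $M_r=\max_{x\in I_0}\int_0^1\mathcal{K}_r(x,y)\,dy$, $m_r=\min_{x\in I_0}\int_0^1\mathcal{K}_r(x,y)\,dy>0$ and $N_r=\max_{x\in I_0}\int_0^1|\partial_x\mathcal{K}_r(x,y)|\,dy$. It then remains to bound $(2M_r+c_0N_r)/m_r$ by a constant independent of $r\in(0,r_0)$ and of $\varepsilon$.

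For the derivative term, differentiating gives $\partial_x\mathcal{K}_r(x,y)=\frac{V_x(x,y)\,G_1'(r-V(x,y))}{G_1(r-V(x,y))^2}$, hence $|\partial_x\mathcal{K}_r(x,y)|=\Phi_r(x,y)\,\mathcal{K}_r(x,y)$ with $\Phi_r(x,y):=|V_x(x,y)|\cdot\frac{|G_1'(r-V(x,y))|}{|G_1(r-V(x,y))|}$. By (A1) there is $y_0$ with $V(x,y_0)=0$ for all $x$, so $G_1(r-V(x,y))$ degenerates to $G_1(0^+)=H'(0)=0$ as $r\to0^+$; controlling $\Phi_r$ near $r=0$ is exactly the content of (A2) (with $I=I_0$, $i=1$), which gives $\sup_{I_0\times\mathbb{T}}\Phi_r\leq C_1$ for all small $r>0$, while for $r$ bounded away from $0$ one has $\sup_{I_0\times\mathbb{T}}\Phi_r<\infty$ by continuity of $G_1$ and $G_1'$ on a compact subinterval of $(0,\infty)$ together with boundedness of $V_x$ on $I_0\times\mathbb{T}$. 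Thus $\bar C:=\sup_{0<r<r_0}\sup_{I_0\times\mathbb{T}}\Phi_r<\infty$, so $|\partial_x\mathcal{K}_r|\leq\bar C\,\mathcal{K}_r$ pointwise and consequently $N_r\leq\bar C\,M_r$. Hence $(2M_r+c_0N_r)/m_r\leq(2+c_0\bar C)\,M_r/m_r$, and $M_r/m_r$ is bounded above uniformly in $r\in(0,r_0)$: near $r=0$ by (A4) (with $I=I_0$, $i=1$), and for $r$ away from $0$ by the two-sided bound $\frac{1}{G_1(r+\Vert V\Vert_{L^\infty})}\leq\mathcal{K}_r\leq\frac{1}{G_1(r)}$, which forces $\mathcal{K}_r$ bounded above and below. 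This produces the constant $C_K$, depending only on $R,T$ (through $I_0$ and $c_0$), on $\mathrm{Lip}(u_0)$ (through $r_0$), and on $H$ and $V$, but not on $r$ or $\varepsilon$.

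The only genuine obstacle I expect is this uniform-in-$r$ control as $r\to0^+$. Because of (A1), along the flat set $\{V=0\}$ the quantity $G_1(r-V)$ vanishes in the limit, so generically both $M_r$ and $N_r$ blow up, and the assertion can hold only because (A2) and (A4) force the blow-up of $N_r$ (equivalently of the maximum) to be of the same order as that of $m_r$ (the minimum); quantifying this comparison is where all the hypotheses are spent. Everything else — the change of variables, the application of Lemma~\ref{Lemma on average - C1 version}, and the elementary step $\int_0^1\mathcal{K}_r(x,y)\,dy\geq m_r$ — is routine and parallels the separable case of Proposition~\ref{eps.eta(eps^-1)-c_r} line by line.
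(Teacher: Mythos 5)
Your proposal is correct and follows essentially the same route as the paper: the same kernel $\mathcal{K}_r=1/G_1(r-V)$, the same two representations of $t_0$ via \eqref{ge.pa.ch.var} and \eqref{ge.def.c_r}, the quantitative ergodic Lemma \ref{Lemma on average - C1 version}, the identity $|\partial_x\mathcal{K}_r|=|V_x|\,\frac{|G_1'(r-V)|}{|G_1(r-V)|}\,\mathcal{K}_r$ controlled by (A2), and the ratio $M_r/m_r$ controlled by (A4). Your extra remark that the limsup conditions (A2), (A4) must be supplemented by a continuity argument for $r$ bounded away from $0$ is a small point of care that the paper glosses over by writing the suprema over $0<r<r_0$ directly.
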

\begin{proof}[Sketch of the proof] Let $\mathcal{K}_r(x,y) = \dfrac{1}{G_1(r-V(x,y))}$ for $(x,y)\in \mathbb{R}\times \mathbb{T}$. Similarly to proof of Proposition \ref{eps.eta(eps^-1)-c_r}, we obtain 
\begin{equation*}
C_K = 2\left(1+2c_0\tilde{K} \right)\sup_{0<r< r_0}\left(\frac{\displaystyle\max_{x\in I_0}\int_0^1 \mathcal{K}_r(x,y)\;dy }{\displaystyle\min_{x\in I_0}\int_0^1 \mathcal{K}_r(x,y)\;dy }\right)  <\infty
\end{equation*}
by assumption (A4) and 
\begin{equation*}
\tilde{K} = \sup_{0<r<r_0} \left \lbrace |V_x(x,y)|.\left|\frac{G_1'(r-V(x,y))}{G_1(r-V(x,y))}\right|: (x,y)\in I_0\times \mathbb{T}\right\rbrace < \infty
\end{equation*}
by assumption (A2).
\end{proof}

\begin{prop}\label{asymp0} For $0<r<r_0$, in view of \eqref{ge.A^eps.formula} we have
\begin{equation*}
\left|\int_{x_0}^{\varepsilon \eta_\varepsilon(\varepsilon^{-1}t_0)} H_1^{-1}\left(r-V(x,\varepsilon^{-1}x)\right)dx  - \int_{x_0}^{c_{1,r}}\int_0^1 H_1^{-1}\left(r-V(x,y)\right)dydx \right| \leq C_F \varepsilon
\end{equation*}
where $C_F$ is a constant independent of $r$. 
\end{prop}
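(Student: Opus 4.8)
The plan is to run the same splitting scheme as in Proposition~\ref{pro2ndterm}, with $\sqrt{2(r-V)}$ replaced by $H_1^{-1}(r-V)$ and with the factor $1/G_1$ taking over the role played there by the derivative of the square root. Set $\mathcal{F}_r(x,y) := H_1^{-1}\big(r-V(x,y)\big)$ for $(x,y)\in\mathbb{R}\times\mathbb{T}$. Since $V\le 0$ by (A1) and $0<r<r_0$, the argument $r-V(x,y)$ ranges over the compact interval $[r,\,r+\Vert V\Vert_{L^\infty}]\subset(0,\infty)$, on which $H_1^{-1}$ is $\mathrm{C}^2$ (it is the inverse of the strictly increasing $\mathrm{C}^2$ map $H|_{[0,\infty)}$, and $H'>0$ on $(0,\infty)$). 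Hence $\mathcal{F}_r\in\mathrm{C}^1(\mathbb{R}\times\mathbb{T})$ for each fixed $r$, and because $G_1=H'\circ H_1^{-1}$ one has $(H_1^{-1})'=1/G_1$, so that
\[
\partial_x\mathcal{F}_r(x,y) \;=\; -\,\frac{V_x(x,y)}{G_1\big(r-V(x,y)\big)}.
\]

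First I would split the integral at $c_{1,r}$. This point lies in $I_0$: indeed $c_{1,r}\ge x_0$, and since $G_1(r-V)\le G_1(r_0+\Vert V\Vert_{L^\infty})$ on $I_0\times\mathbb{T}$, the defining relation \eqref{ge.def.c_r} forces $c_{1,r}-x_0\le t_0\,G_1(r_0+\Vert V\Vert_{L^\infty})\le c_0$. Writing
\[
\int_{x_0}^{\varepsilon\eta_\varepsilon(\varepsilon^{-1}t_0)}\mathcal{F}_r(x,\varepsilon^{-1}x)\,dx
= \int_{x_0}^{c_{1,r}}\mathcal{F}_r(x,\varepsilon^{-1}x)\,dx
+ \int_{c_{1,r}}^{\varepsilon\eta_\varepsilon(\varepsilon^{-1}t_0)}\mathcal{F}_r(x,\varepsilon^{-1}x)\,dx,
\]
I would apply Lemma~\ref{Lemma on average - C1 version} to the first piece on $[x_0,c_{1,r}]\subset I_0$, which replaces it by $\int_{x_0}^{c_{1,r}}\int_0^1\mathcal{F}_r(x,y)\,dy\,dx$ at the cost of an error at most $\big(2\max_{x\in I_0}\int_0^1|\mathcal{F}_r(x,y)|\,dy + c_0\max_{x\in I_0}\int_0^1|\partial_x\mathcal{F}_r(x,y)|\,dy\big)\varepsilon$; and I would bound the second piece crudely by $\big(\max_{I_0\times\mathbb{T}}|\mathcal{F}_r|\big)\,\big|\varepsilon\eta_\varepsilon(\varepsilon^{-1}t_0)-c_{1,r}\big| \le \big(\max_{I_0\times\mathbb{T}}|\mathcal{F}_r|\big)\,C_K\,\varepsilon$ using Proposition~\ref{asymp1}. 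Summing via the triangle inequality, the averaged first integral is exactly the quantity appearing on the right of the claimed inequality.

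It then remains to verify that $\max_{I_0\times\mathbb{T}}|\mathcal{F}_r|$ and $\max_{x\in I_0}\int_0^1|\partial_x\mathcal{F}_r(x,y)|\,dy$ are bounded uniformly over $0<r<r_0$, and this is the only step where something genuinely new over the classical case enters. Since $H_1^{-1}$ is increasing and $0\le r-V(x,y)\le r_0+\Vert V\Vert_{L^\infty}$, we get $|\mathcal{F}_r|\le H_1^{-1}(r_0+\Vert V\Vert_{L^\infty})$. For the gradient term, recall that $|G_1|$ is increasing on $[0,\infty)$ and that $r-V(x,y)\ge -V(x,y)=|V(x,y)|\ge 0$, whence
\[
\big|\partial_x\mathcal{F}_r(x,y)\big| \;=\; \left|\frac{V_x(x,y)}{G_1\big(r-V(x,y)\big)}\right| \;\le\; \left|\frac{V_x(x,y)}{G_1\big(|V(x,y)|\big)}\right|,
\]
and the supremum of the right-hand side over $I_0\times\mathbb{T}$ is finite precisely by assumption (A3) applied with $I=I_0$. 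Combining the three bounds yields the claim with
\[
C_F \;=\; (2+C_K)\,H_1^{-1}\big(r_0+\Vert V\Vert_{L^\infty}\big) \;+\; c_0\,\sup_{(x,y)\in I_0\times\mathbb{T}}\left|\frac{V_x(x,y)}{G_1(|V(x,y)|)}\right|,
\]
which depends only on $R,T,H,V$ and not on $r$. The hard part, such as it is, is the uniform-in-$r$ control of $\partial_x\mathcal{F}_r$ near the set $\{V=0\}$ (where $G_1(|V|)$ degenerates) — but that is exactly what (A3) is designed to furnish; everything else is the verbatim analogue of the computations in Proposition~\ref{pro2ndterm}.
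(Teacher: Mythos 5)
Your proof is correct and follows essentially the same route as the paper: the same splitting at $c_{1,r}$, the same application of Lemma \ref{Lemma on average - C1 version} with $\mathcal{F}_r = H_1^{-1}(r-V)$, the same use of $(H_1^{-1})' = 1/G_1$ together with the monotonicity of $|G_1|$ and $r-V\ge |V|$ to reduce the derivative bound to assumption (A3), and the same tail estimate via Proposition \ref{asymp1}; your constant $(2+C_K)F_1 + c_0F_2$ matches the paper's $2F_1 + c_0F_2 + C_KF_1$ exactly.
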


\begin{proof}[Sketch of the proof] Define $\mathcal{F}_r(x,y) = H_1^{-1}\left(r-V(x,y)\right)$ for $(x,y)\in \mathbb{R}\times\mathbb{T}$. The proof is similar to Proposition \ref{pro2ndterm}. We use (A3) to get the bound $F_2$:
\begin{align*}
F_1&:= H_1^{-1}\left(r_0+ \Vert V\Vert_{L^\infty}\right)  \geq \max_{x\in I_0}\int_0^1 \mathcal{F}_r(x,y)\;dy\\
F_2&:= \sup \left\lbrace \frac{|V_x(x,y)|}{|G_1(-V(x,y))|}\;\Big|\; (x,y) \in I_0\times \mathbb{T}\right\rbrace \geq  \max_{x\in I_0} \int_0^1 \left|\frac{\partial F_r}{\partial x}(x,y)\right|dy.
\end{align*}
Similar to Proposition \ref{pro2ndterm}, we can compute $C_F$ as $C_F = 2F_1+c_0F_2 + C_KF_1$.
\end{proof}

\begin{prop}\label{asymp3} We have the following estimate:
\begin{equation}\label{ge.pos.r.fi}
\left|\inf_{{0< r < r_0}\atop {i=1,2}} A^\varepsilon [\eta_{i,r,\varepsilon}] - \inf_{0< r < r_0} I(r)\right| \leq C \varepsilon
\end{equation}
where $C$ is a constant independent of $r$ and $I(r) = \min \left\lbrace I_1(r),I_2(r)\right\rbrace$ where
\begin{align}
I_1(r) &= -rt_0 + \int_{x_0}^{c_{1,r}} \int_0^1 H^{-1}_1\left(r-V\left(x,y\right)\right)dy\;dx + u_0\left(c_{1,r}\right),\label{ge.I1(r).def}\\
I_2(r) &= -rt_0 + \int_{c_{2,r}}^{x_0}\; \int_0^1 H^{-1}_2\left(r-V\left(x,y\right)\right)dy\;dx + u_0\left(c_{2,r}\right).\label{ge.I2(r).def}
\end{align}
\end{prop}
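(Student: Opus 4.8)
The plan is to follow the template of Proposition \ref{prosym3}, since once the analytic ingredients (Propositions \ref{asymp1} and \ref{asymp0}) are available the remaining work is purely bookkeeping. First I would fix $0<r<r_0$ and treat the branch $\eta_\varepsilon=\eta_{1,r,\varepsilon}$ solving \eqref{ge.p.1st.case} with $i=1$. Because $u_0\in\mathrm{Lip}(\mathbb{R})$, Proposition \ref{asymp1} immediately gives $|u_0(\varepsilon\eta_\varepsilon(\varepsilon^{-1}t_0))-u_0(c_{1,r})|\leq \Vert u_0'\Vert_{L^\infty}|\varepsilon\eta_\varepsilon(\varepsilon^{-1}t_0)-c_{1,r}|\leq C_K\Vert u_0'\Vert_{L^\infty}\varepsilon$. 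Feeding this, together with Proposition \ref{asymp0} (which replaces the oscillatory integral in \eqref{ge.A^eps.formula} by $\int_{x_0}^{c_{1,r}}\int_0^1 H_1^{-1}(r-V(x,y))\,dy\,dx$ up to an error $C_F\varepsilon$) and the definition \eqref{ge.I1(r).def} of $I_1(r)$, into the representation \eqref{ge.A^eps.formula} yields $|A^\varepsilon[\eta_{1,r,\varepsilon}]-I_1(r)|\leq C_1\varepsilon$ with $C_1=C_F+C_K\Vert u_0'\Vert_{L^\infty}$, a constant that does not depend on $r$.

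Next I would pass to the infimum over $r$. The only elementary fact needed is that if $|f(r)-g(r)|\leq C_1\varepsilon$ for every index $r$, then $|\inf_r f(r)-\inf_r g(r)|\leq C_1\varepsilon$; applied with $f(r)=A^\varepsilon[\eta_{1,r,\varepsilon}]$ and $g(r)=I_1(r)$ on $(0,r_0)$ it gives $|\inf_{0<r<r_0}A^\varepsilon[\eta_{1,r,\varepsilon}]-\inf_{0<r<r_0}I_1(r)|\leq C_1\varepsilon$. The branch $\eta_\varepsilon=\eta_{2,r,\varepsilon}$, where $\dot{\eta}_\varepsilon<0$, is handled by the orientation-reversed analogues of Propositions \ref{asymp1} and \ref{asymp0}: changing variables through $\dot{\eta}_\varepsilon(s)=G_2(r-V(\varepsilon\eta_\varepsilon(s),\eta_\varepsilon(s)))$ flips the limits of integration and substitutes $c_{2,r}$ and $H_2^{-1}$ for $c_{1,r}$ and $H_1^{-1}$, the monotonicity of $|G_2|$ and the two-sided bound \eqref{G_i.rate} playing exactly the role they did for $i=1$. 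This produces a constant $C_2$ of the same type with $|\inf_{0<r<r_0}A^\varepsilon[\eta_{2,r,\varepsilon}]-\inf_{0<r<r_0}I_2(r)|\leq C_2\varepsilon$.

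Finally I would assemble the two branches: since $\inf_{0<r<r_0,\,i=1,2}A^\varepsilon[\eta_{i,r,\varepsilon}]=\min\{\inf_{0<r<r_0}A^\varepsilon[\eta_{1,r,\varepsilon}],\inf_{0<r<r_0}A^\varepsilon[\eta_{2,r,\varepsilon}]\}$ and likewise $\inf_{0<r<r_0}I(r)=\min\{\inf_{0<r<r_0}I_1(r),\inf_{0<r<r_0}I_2(r)\}$, the bound \eqref{ge.pos.r.fi} follows from $|\min\{a,b\}-\min\{c,d\}|\leq\max\{|a-c|,|b-d|\}$ with $C=\max\{C_1,C_2\}$. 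I do not anticipate a real obstacle here: the substantive estimates are already contained in Propositions \ref{asymp1} and \ref{asymp0}, and the single point that requires vigilance — that all implied constants be uniform in $r\in(0,r_0)$, so that taking the infimum over $r$ does not degrade the $\mathcal{O}(\varepsilon)$ rate — is exactly what assumptions (A2)--(A4) are there to provide.
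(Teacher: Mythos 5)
Your proposal is correct and matches the paper's (omitted, but implicit) argument essentially line for line: bound $|A^\varepsilon[\eta_{i,r,\varepsilon}]-I_i(r)|$ uniformly in $r$ via Propositions \ref{asymp1} and \ref{asymp0} together with the Lipschitz continuity of $u_0$, then pass to infima over $r$ and take the minimum over the two branches, exactly as in Proposition \ref{prosym3}. The constant $C_1=C_F+C_K\Vert u_0'\Vert_{L^\infty}$ you obtain is the one the paper intends.
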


The proof is omitted since it is similar to Proposition \ref{prosym3}.

Finally, using \eqref{ge.neg.r.fi} and \eqref{ge.pos.r.fi} in \eqref{ge.aaa} we obtain the claim of Theorem \ref{general case}.
\begin{equation*}
\left|u^\varepsilon(x_0,t_0) - u(x_0,t_0)\right| \leq \left(\max\Big\lbrace H_1^{-1}\left(\Vert V\Vert_{L^\infty}\right) + \Vert u'_0\Vert_{L^\infty},C \Big\rbrace\right)\varepsilon.
\end{equation*}

\begin{lem}\label{key lemma 2} Let $\mathcal{V}\in \mathrm{C}^2\big([0,1],[0,\infty)\big)$ with $\min_{x\in [0,1]}\mathcal{V}(x) = 0$ and $\mathcal{V}(0) = \mathcal{V}(1)$.
\begin{itemize}
\item[(i)] Let $H, G_1,G_2$ be defined as in Theorem \ref{general case}. If 
\begin{equation}\label{lemA1}
\limsup_{p\rightarrow 0} \left|\frac{H''(p)}{H'(p)}\sqrt{H(p)}\right| < \infty,
\end{equation}
then $x\mapsto G_i(\mathcal{V}(x))$ is Lipschitz on $[0,1]$ for $i=1,2$.
\item[(ii)] If $H$, defined in Theorem \ref{general case}, satisfies $H''(0) > 0$ then we have something stronger than \eqref{lemA1}
\begin{equation}\label{lemA1_pt1}
\limsup_{p\rightarrow 0} \left|\frac{\sqrt{H(p)}}{H'(p)}\right| < \infty.
\end{equation}
In this case we have further that $C_{1,I}\sqrt{x}\leq|G_i(x)|\leq C_{2,I}\sqrt{x}$ on any bounded subset $I\subset\mathbb{R}$, where $i=1,2$ and $C_{I,1},C_{I,2}>0$.
\item[(iii)] If $H$, defined in Theorem \ref{general case}, satisfies $H\in \mathrm{C}^3(\mathbb{R})$ then 
\begin{equation}\label{lemA1_pt2}
\limsup_{p\rightarrow 0} \frac{\left|H''(p)\right|}{\sqrt{|H'(p)|}} < \infty.
\end{equation}
As a consequence, we have something stronger than \eqref{lemA1}
\begin{equation}\label{lemA1_pt3}
\limsup_{p\rightarrow 0} \left|\frac{H''(p)}{H'(p)}\sqrt{\frac{H(p)}{|p|}}\right| < \infty.
\end{equation}
\item[(iv)] If $H(p) = |p|^\gamma$ where $\gamma\geq 2$ then \eqref{lemA1} holds true.
\end{itemize}
\end{lem}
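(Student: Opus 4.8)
The plan is to treat part (i) as the core of the lemma and to obtain (ii)--(iv) by verifying that each of their hypotheses forces, and in the asserted cases strengthens, the estimate \eqref{lemA1}. Throughout, write $f(x):=G_i(\mathcal V(x))$. On the open set $\{\mathcal V>0\}$ the function $f$ is $\mathrm{C}^1$: from $G_1(t)=H'\bigl(H_1^{-1}(t)\bigr)$ one computes, for $t>0$ and $p:=H_1^{-1}(t)>0$ (so that $H(p)=t$),
\[
G_1'(t)=\frac{H''(p)}{H'(p)},
\]
and the same identity holds for $G_2$ with $p=H_2^{-1}(t)<0$. Hence, for $x$ with $\mathcal V(x)>0$ and $p=H_i^{-1}(\mathcal V(x))$, Lemma \ref{key lemma} gives
\[
|f'(x)|=\left|\frac{H''(p)}{H'(p)}\right|\,|\mathcal V'(x)|\le L\left|\frac{H''(p)}{H'(p)}\sqrt{H(p)}\right|.
\]
As $x$ ranges over $\{\mathcal V>0\}$, the parameter $p$ stays in $(0,p^\ast]$ with $p^\ast=H_i^{-1}\bigl(\max_{[0,1]}\mathcal V\bigr)$; on $(0,p^\ast]$ the map $p\mapsto\bigl|H''(p)/H'(p)\bigr|\sqrt{H(p)}$ is continuous (by strict convexity $H'\neq 0$ away from the origin) and, by \eqref{lemA1}, has finite $\limsup$ at $0$, hence is bounded by some $M_0<\infty$, so $|f'|\le LM_0=:M$ on $\{\mathcal V>0\}$. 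To upgrade this to a Lipschitz bound on all of $[0,1]$, given $x_0<x_1$ I would distinguish two cases: if $\mathcal V>0$ on $[x_0,x_1]$ the mean value theorem gives $|f(x_1)-f(x_0)|\le M|x_1-x_0|$; otherwise let $z_0\le z_1$ be the smallest and largest zeros of $\mathcal V$ in $[x_0,x_1]$, so that $\mathcal V>0$ on $[x_0,z_0)$ and on $(z_1,x_1]$ and $f(z_0)=f(z_1)=G_i(0)=0$, and the previous bound on those two subintervals (extended to $z_0,z_1$ by continuity of $f$) gives $|f(x_1)-f(x_0)|\le M(x_1-z_1)+0+M(z_0-x_0)\le M|x_1-x_0|$.

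For part (ii), since $H(0)=H'(0)=0$ and $H\in\mathrm{C}^2$, the Taylor expansions $H(p)=\tfrac12 H''(0)p^2+o(p^2)$ and $H'(p)=H''(0)p+o(p)$ give $\sqrt{H(p)}/|H'(p)|\to 1/\sqrt{2H''(0)}$ as $p\to0$, which is \eqref{lemA1_pt1}; multiplying by $|H''(p)|$, bounded near $0$ with limit $H''(0)$, recovers \eqref{lemA1}. For the two-sided estimate, observe that $|G_i(x)|/\sqrt x$ equals $|H'(p)|/\sqrt{H(p)}$ at $p=H_i^{-1}(x)$, so by the same expansion it extends continuously to $x=0$ with positive value $\sqrt{2H''(0)}$; being continuous and strictly positive on the compact remainder of the domain $I\cap[0,\infty)$ as well, it lies between two positive constants $C_{1,I}$ and $C_{2,I}$, which is the claim.

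For part (iii), note first that if $H''(0)>0$ we are already in the situation of (ii), so we may assume $H''(0)=0$. Then $g:=H'|_{[0,\infty)}$ is nonnegative, $\mathrm{C}^2$, with $g(0)=0$ and $g'(0)=H''(0)=0$, so its even reflection $x\mapsto H'(|x|)$ is a nonnegative $\mathrm{C}^2$ function near $0$, and the classical Glaeser-type inequality $|g'|^2\le 2\Vert g''\Vert_{L^\infty}\,g$ for nonnegative $\mathrm{C}^2$ functions (the mechanism behind Lemma \ref{key lemma}) gives $|H''(p)|^2\le 2\Vert H'''\Vert_{L^\infty(\text{near }0)}\,|H'(p)|$ for $p>0$ small; the case $p<0$ follows by applying the same argument to $H(-p)$. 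This is \eqref{lemA1_pt2}. Combining it with the elementary convexity bound $H(p)\le|p|\,|H'(p)|$ (integrate the monotone $H'$ from $0$ to $p$) gives
\[
\left|\frac{H''(p)}{H'(p)}\right|\sqrt{\frac{H(p)}{|p|}}\le\frac{|H''(p)|}{\sqrt{|H'(p)|}}\le\sqrt{2\Vert H'''\Vert_{L^\infty}},
\]
which is \eqref{lemA1_pt3}; and since $\sqrt{H(p)}=\sqrt{H(p)/|p|}\,\sqrt{|p|}$ with $|p|\le1$ near $0$, \eqref{lemA1_pt3} dominates $\bigl|H''(p)/H'(p)\bigr|\sqrt{H(p)}$ and so implies \eqref{lemA1}. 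Part (iv) is a direct computation: for $H(p)=|p|^\gamma$,
\[
\left|\frac{H''(p)}{H'(p)}\right|\sqrt{H(p)}=(\gamma-1)|p|^{\gamma/2-1},
\]
which remains bounded as $p\to0$ precisely when $\gamma\ge2$, so \eqref{lemA1} holds.

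The only place I expect a genuine obstacle is in part (i): converting the pointwise bound $|f'|\le M$, valid only on the open set $\{\mathcal V>0\}$, into a Lipschitz estimate on the entire interval. Because $G_i$ need not be differentiable — or even Lipschitz as a function of its argument — at $0$, there is no global chain rule for $f$, and one must use that $f$ vanishes identically on $\{\mathcal V=0\}$ to bridge across the zeros of $\mathcal V$. A smaller, essentially cosmetic point is the reduction to $H''(0)=0$ in part (iii), which is what makes the even reflection of $H'|_{[0,\infty)}$ a $\mathrm{C}^2$ function so that Glaeser's inequality applies; beyond that, everything is a Taylor expansion or a one-line algebraic manipulation.
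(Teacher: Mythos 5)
Your proof is correct and, for parts (i), (ii) and (iv), follows essentially the same route as the paper: the heart of (i) is the same product bound, namely that $\frac{d}{dx}G_i(\mathcal{V}(x))$ equals $\frac{\mathcal{V}'(x)}{\sqrt{\mathcal{V}(x)}}$ times $\frac{H''(\xi)}{H'(\xi)}\sqrt{H(\xi)}$ with $\xi=H_i^{-1}(\mathcal{V}(x))$, controlled by Lemma \ref{key lemma} together with \eqref{lemA1}. The one technical difference in (i) is how the degeneracy at the zeros of $\mathcal{V}$ is handled: the paper regularizes, setting $f_\varepsilon(x)=G_i(\mathcal{V}(x)+\varepsilon)$, observes that $f_\varepsilon$ is $\mathrm{C}^1$ with a derivative bound independent of $\varepsilon$, and lets $\varepsilon\rightarrow 0$, whereas you work directly with $f=G_i(\mathcal{V})$ on $\{\mathcal{V}>0\}$ and bridge across the zero set using $f\equiv G_i(0)=0$ there. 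Both are valid; the regularization avoids your case analysis, your version avoids the limiting step. The more substantive divergence is in (iii). The paper's proof is a one-line appeal to ``a similar argument to Lemma \ref{key lemma}'' applied to $g=H'$; but $H'$ changes sign at $0$, so that argument (which needs nonnegativity of the function under the square root on a full neighborhood) does not apply as written, and in fact \eqref{lemA1_pt2} and \eqref{lemA1_pt3} are false when $H''(0)>0$ (take $H(p)=p^2/2$: then $|H''(p)|/\sqrt{|H'(p)|}=|p|^{-1/2}\rightarrow\infty$). Your reduction to $H''(0)=0$ is therefore not merely cosmetic: it is exactly what makes the even reflection of $H'|_{[0,\infty)}$ a nonnegative $\mathrm{C}^2$ function to which the Glaeser-type inequality applies, and in the excluded case $H''(0)>0$ the intended conclusion \eqref{lemA1} is already supplied by part (ii). On this point your write-up is more careful than the paper's, and it would be worth saying explicitly that (iii) as literally formulated should carry the additional hypothesis $H''(0)=0$, or else be read only as one of several routes to \eqref{lemA1}.
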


\begin{cor} We have the following representation formula
\begin{equation*}
u(x_0,t_0) = \min\left \lbrace u_0(x_0), \min\left\lbrace \inf_{0< r< r_0} I_1(r),\inf_{0< r< r_0} I_2(r)\right\rbrace \right\rbrace.
\end{equation*}
where $I_1(r)$ and $I_2(r)$ are defined in \eqref{ge.I1(r).def} and \eqref{ge.I2(r).def}, respectively.
\end{cor}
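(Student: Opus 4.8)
The plan is to read off the formula directly from the $\mathcal{O}(\varepsilon)$ estimates that make up the proof of Theorem \ref{general case}, by passing to the limit $\varepsilon\to 0^+$. Fix $(x_0,t_0)\in\mathbb{R}\times[0,\infty)$ and pick $R,T>0$ with $(x_0,t_0)\in[-R,R]\times[0,T]$; this fixes $I_0$, $c_0$ and the threshold $r_0$, which by Proposition \ref{ignore} depends only on $\mathrm{Lip}(u_0)$ and $H(p)$. By the decomposition \eqref{ge.aaa},
\begin{equation*}
u^\varepsilon(x_0,t_0)=\min\left\{\inf_{r\le 0}A^\varepsilon[\eta_\varepsilon],\ \inf_{0<r<r_0}A^\varepsilon[\eta_{1,r,\varepsilon}],\ \inf_{0<r<r_0}A^\varepsilon[\eta_{2,r,\varepsilon}]\right\}.
\end{equation*}

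Next I would pass each of the three terms to the limit. Proposition \ref{prop r<0} gives $\inf_{r\le 0}A^\varepsilon[\eta_\varepsilon]\to u_0(x_0)$ with rate $\mathcal{O}(\varepsilon)$, and Proposition \ref{asymp3} (which through Propositions \ref{asymp1} and \ref{asymp0} is where assumptions $\mathrm{(A2)}$--$\mathrm{(A4)}$ enter) gives $\inf_{0<r<r_0}A^\varepsilon[\eta_{i,r,\varepsilon}]\to\inf_{0<r<r_0}I_i(r)$ for $i=1,2$, again with rate $\mathcal{O}(\varepsilon)$. Since the minimum of finitely many reals is $1$-Lipschitz in each argument, the right-hand side above converges to $\min\{u_0(x_0),\,\inf_{0<r<r_0}I_1(r),\,\inf_{0<r<r_0}I_2(r)\}$ as $\varepsilon\to 0^+$.

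On the other hand, by periodic homogenization (or directly from the rate \eqref{ge.rate} established in Theorem \ref{general case}) we have $u^\varepsilon(x_0,t_0)\to u(x_0,t_0)$. Uniqueness of limits then yields
\begin{equation*}
u(x_0,t_0)=\min\left\{u_0(x_0),\ \inf_{0<r<r_0}I_1(r),\ \inf_{0<r<r_0}I_2(r)\right\},
\end{equation*}
with $I_1,I_2$ as in \eqref{ge.I1(r).def}--\eqref{ge.I2(r).def}; since $(x_0,t_0)$ was arbitrary, this holds on all of $\mathbb{R}\times[0,\infty)$. There is no substantial obstacle: the one point worth checking is that truncating the energy at $r_0$ does not discard the true infimum, but this is exactly Proposition \ref{ignore}, which shows that any admissible path with energy $r\ge r_0$ has action at least $u^\varepsilon(x_0,t_0)+t_0$, so the restriction to $0<r<r_0$ is harmless; one may also note that $I_i(r)\to+\infty$ as $r\to+\infty$, so the value of the formula is in fact independent of the particular (large enough) $r_0$ chosen.
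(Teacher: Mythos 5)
Your argument is correct and is essentially the paper's own (implicit) proof: combine the decomposition \eqref{ge.aaa} with the $\mathcal{O}(\varepsilon)$ estimates of Propositions \ref{prop r<0} and \ref{asymp3}, then identify the limit with $u(x_0,t_0)$ via the known convergence $u^\varepsilon\to u$. Your added remarks (1-Lipschitz continuity of the minimum, independence of the formula from the choice of $r_0$, and the non-circular use of qualitative homogenization to identify the limit) are all sound.
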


\begin{proof}[Proof of Corollary \ref{corollary simplified}] In order to apply Theorem \ref{general case} we need to check conditions (A0),(A2),(A3),(A4). Let us fix a compact interval $I\subset\mathbb{R}$, in the assumption of $V$ let us denote $\alpha,\beta,f$ by $\alpha_I,\beta_I,f_I$ for simplicity.

If $H(p) = |p|^\gamma$ where $\gamma
 \geq 2$ then $|G_i(p)| = \gamma |p|^{1-\frac{1}{\gamma}}$ and $|G_i'(p)|=(\gamma-1)|p|^{-\frac{1}{\gamma}}$. Therefore conditions (A0),(A2),(A3) follow from direct computation. (A4) follows since $p\mapsto |G_i(p)|$ is increasing and for any compact interval $I\subset\mathbb{R}$ then
 \begin{equation*}
\max_{x\in I}\int_0^1 \frac{dy}{|G_i(r-V(x,y))|}  \leq \frac{1}{\gamma}\left(\frac{\beta}{\alpha}\right)^{1-\frac{1}{\gamma}} \min_{x\in I}\int_0^1 \frac{dy}{|G_i(r-V(x,y)|}.
 \end{equation*}

In general when $H''(0) > 0$, condition (A0) follows from Lemma \ref{key lemma 2}. On the bounded set $\left[0,\Vert V\Vert_{L^\infty}+1\right]$ by Lemma \ref{key lemma 2} we have $C_1\sqrt{x} \leq |G_i(x)|\leq C_2\sqrt{x}$ for $i=1,2$ and for some $C_1,C_2>0$. For $i=1,2$, $0<r<1$ and $x\in I$ we have
\begin{align*}
|V_x(x,y)|.\frac{|G_i'(r-V(x,y))|}{|G_i(r-V(x,y))|}\leq \frac{|V_x(x,y)|}{|V(x,y)|} \left(\frac{\sqrt{H(\xi)}}{|G_i(H(\xi))|}\right)\left( \frac{|H''(\xi)|}{|H'(\xi)|}\sqrt{H(\xi)}\right)
\end{align*}
where $\xi = H_i^{-1}(r-V(x,y))$. The right hand side is bounded as $r\rightarrow 0^+$ due to (A0), $C_1\sqrt{x} \leq |G_i(x)|\leq C_2\sqrt{x}$ and \eqref{rmk.3.2}, thus (A2) follows. Condition (A3) is true since for $x\in I$ then
\begin{align*}
\left|\frac{V_x(x,y)}{G_i(V(x,y))}\right| \leq  \left|\frac{V_x(x,y)}{V(x,y)}\right|.\frac{\sqrt{|V(x,y)|}}{|G_i(|(V(x,y)|)|}.\sqrt{|V(x,y)|}.
\end{align*}
Finally, for $i=1,2$ then $x\mapsto |G_i(x)|$ is increasing, using  $C_1\sqrt{x} \leq |G_i(x)|\leq C_2\sqrt{x}$ we deduce that for $0<r<1$ then
\begin{align*}
\max_{x\in I}\int_0^1 \frac{dy}{|G_i(r-V(x,y))|} &\leq \int_0^1 \frac{dy}{|G_i(r+\alpha f(y))|}\leq \int_0^1 \frac{dy}{C_1\sqrt{r+\alpha f(y)}}\\
\min_{x\in I}\int_0^1 \frac{dy}{|G_i(r-V(x,y))|} &\geq \int_0^1 \frac{dy}{|G_i(r+\beta f(y))|} \geq \int_0^1 \frac{dy}{C_2\sqrt{r+\beta f(y)}}.
\end{align*}
Since $\alpha \leq \beta$, we have $\sqrt{r+\alpha f(y)} \geq \sqrt{\frac{\alpha}{\beta}\left(r+\beta f(y)\right)}$ and therefore
\begin{equation*}
\int_0^1 \frac{dy}{C_1\sqrt{r+\alpha f(y)}} \leq \left(\frac{C_2}{C_1}\sqrt{\frac{\beta}{\alpha}}\right) \int_0^1 \frac{dy}{C_2\sqrt{r+\beta f(y)}} 
\end{equation*}
and thus (A4) follows.
\end{proof}

\section{Appendix}

\begin{proof}[Proof of Lemma \ref{Lemma on average - C1 version}] Since $y\mapsto F(x,y)$ is periodic, we have $y\mapsto \frac{\partial F}{\partial x} (x,y)$ is also periodic. Let us define
\begin{equation*}
G(x,y) = \int_0^y \left(F(x,z) - \int_0^1 F(x,\zeta)\;d\zeta\right) \;dz 
\end{equation*}
then $\frac{\partial G}{\partial y} (x,y) = F(x,y) - \int_0^1 F(x,\zeta)\;d\zeta$. Since $G$ is periodic in $y$, $\frac{\partial G}{\partial x}$ is also periodic in $y$. 
Thus $G$ and $\frac{\partial G}{\partial x}$ are bounded in $y$. The fact that $\frac{\partial F}{\partial x}$ is bounded in $x$ implies $\frac{\partial G}{\partial x}$ is bounded in $x$ as well. Let $g_\varepsilon(x) = \varepsilon G\left(x,\frac{x}{\varepsilon}\right)$ we obtain
\begin{align*}
\frac{d}{dx} \Big(g_\varepsilon(x)\Big) = \varepsilon \frac{\partial G}{\partial x}\left(x,\frac{x}{\varepsilon}\right) + \frac{\partial G}{\partial y}\left(x,\frac{x}{\varepsilon}\right) = \varepsilon \frac{\partial G}{\partial x}\left(x,\frac{x}{\varepsilon}\right) + F\left(x,\frac{x}{\varepsilon}\right) - \int_0^1 F(x,\zeta)\;d\zeta.
\end{align*}
Thus
\begin{align*}
\int_a^b F\left(x,\frac{x}{\varepsilon}\right)dx - \int_a^b \int_0^1 F(x,\zeta)\;d\zeta dx 
&= \varepsilon \left[G\left(b,\frac{b}{\varepsilon}\right) - G\left(a,\frac{a}{\varepsilon}\right) - \int_a^b \frac{\partial G}{\partial x}\left(x,\frac{x}{\varepsilon}\right)dx\right].
\end{align*}
Note that by the way we defined $G$, we also have 
\begin{align*}
\max_{(x,y)} |G(x,y)| &\leq \max_{x\in [a,b]} \int_0^1 \left|F(x,y)\right|\;dy\\
\left|\int_a^b \frac{\partial G}{\partial x}\left(x,\frac{x}{\varepsilon}\right)\;dx\right| &\leq \int_a^b \max_{(x,y)} \left| \frac{\partial G}{\partial x}(x,y)\right|\;dx \leq (b-a) \max_{x\in [a,b]} \int_0^1 \left|\frac{\partial F}{\partial x}(x,y)\right|\;dy
\end{align*}
and hence the proof is complete.
\end{proof}

\begin{proof}[Proof of Lemma \ref{key lemma}] For each $a\in [0,1]$, an $\delta$-neighborhood $\mathcal{N}_{a,\delta}$ of $a$ is defined as $(a-\delta,a+\delta)$ if $a\in (0,1)$ and $[0,\delta)\cup (1-\delta,1]$ if $a \in \{0,1\}$. It is clear that $\mathcal{N}_{a,\delta}$ is open in $[0,1]$. We claim that there exists $\delta = \delta(a)>0$ such that 
\begin{equation}\label{key}
\sup_{x\in \mathcal{N}^*_{a,\delta}}\frac{\left|\mathcal{V}\,'(x)\right|}{\sqrt{\mathcal{V}(x)}} \leq C_a < \infty
\end{equation}
for some constant $C_a$, where $ \mathcal{N}^*_{a,\delta} = \{x\in \mathcal{N}_{a,\delta}: \mathcal{V}(x)\neq 0\}$. Assume that \eqref{key} is false, then there exists a sequence $x_k\rightarrow a^+$ such that $\mathcal{V}(x_k) \neq 0$ for all $k\in \mathbb{N}$ and
\begin{equation}\label{key.1}
\lim_{k\rightarrow \infty} \frac{\left|\mathcal{V}\,'(x_k)\right|}{\sqrt{\mathcal{V}(x_k)}} = +\infty.
\end{equation}
It is clear that $\mathcal{V}\,'(x_k)\neq 0$ for all $k\in \mathbb{N}$. We can assume that $\mathcal{V}\,'(x_k) >0$ for all $k$. Let $g_k = \sqrt{\mathcal{V}(x_k)}$ and $h_k = \mathcal{V}\,'(x_k)$, and 
\begin{equation*}
a_k = \sup \left\lbrace r>0: \mathcal{V}\,'(x)\geq \frac{h_k}{2}\;\text{for all}\; x\in (x_k-r,x_k) \right\rbrace.
\end{equation*}
Clearly $\mathcal{V}\,'(x_k-a_k) = \frac{h_k}{2}$. By mean value theorem we have
\begin{equation*}
g_k^2=  \mathcal{V}(x_k)\geq \mathcal{V}(x_k) - \mathcal{V}(x_k-a_k) \geq \frac{1}{2}h_k a_k.
\end{equation*}
By mean value theorem again, there exists $\xi_k\in (x_k-a_k,x_k)$ such that
\begin{align*}
\mathcal{V}\,''(\xi_k)&=\frac{\mathcal{V}\,'(x_k)-\mathcal{V}\,'(x_k-a_k)}{a_k}= \frac{1}{2}\frac{h_k}{a_k}\geq 
\frac{1}{4} \left(\frac{h_k}{g_k}\right)^2\rightarrow \infty
\end{align*}
as $k\rightarrow \infty$ due to \eqref{key.1}. It is a contradiction since $\mathcal{V}\in \mathrm{C}^2([0,1])$, thus \eqref{key} must be correct. By compactness of $[0,1]$, we can pick a finite subcover of $[0,1]$ from the open cover $\left\lbrace \mathcal{N}_{a,\delta}:a\in [0,1]\right\rbrace$. From \eqref{key} there exists a constant $L>0$ such that
\begin{equation}\label{key.2}
\frac{\left|\mathcal{V}\,'(x)\right|}{\sqrt{\mathcal{V}(x)}} \leq L \qquad\text{whenever}\qquad \mathcal{V}(x)\neq 0.
\end{equation}
For $0<\varepsilon<1$ let $f_\varepsilon(x) = \sqrt{\mathcal{V}(x)+\varepsilon} \in \mathrm{C}^2([0,1])$. It $\mathcal{V}(x) = 0$ then $\mathcal{V}\,'(x) = 0$, hence $f'_\varepsilon(x) = 0$ as well, while if $\mathcal{V}(x)\neq 0$ then from \eqref{key.2} we have 
\begin{equation*}
|f'_\varepsilon(x)| = \left|\frac{\mathcal{V}\,'(x)}{2\sqrt{\mathcal{V}(x)+\varepsilon}}\right| \leq \frac{1}{2}\frac{\left|\mathcal{V}\,'(x)\right|}{\sqrt{\mathcal{V}(x)}} \leq \frac{L}{2}
\end{equation*}
Thus $f_\varepsilon(x)$ is Lipschitz on $[0,1]$ with a Lipschitz constant independent of $\varepsilon$. Let $\varepsilon\rightarrow 0$ we deduce that $x\mapsto\sqrt{\mathcal{V}(x)}$ is Lipschitz on $[0,1]$.
\end{proof}

\begin{proof}[Proof of Lemma \ref{key lemma 2}] \quad
\begin{itemize}
\item[(i)] It suffices to show for $G_1$ since the argument is similar for $G_2$. For simplicity, let us denote $G_1,H^{-1}_1$ by $G,H^{-1}$. For $0<\varepsilon<1$ let $f_\varepsilon(x) = G\left(\mathcal{V}(x) +  \varepsilon\right)$ then $f_\varepsilon\in \mathrm{C}^2([0,1])$ and 
\begin{align*}
f'_\varepsilon(x) &= \frac{\mathcal{V}\,'(x)}{\sqrt{\mathcal{V}(x)+\varepsilon}}\left(\frac{H''\big(H^{-1}\left(\mathcal{V}(x)+\varepsilon\right)\big)}{H'\big(H^{-1}\left(\mathcal{V}(x)+\varepsilon\right)\big)}\sqrt{\mathcal{V}(x)+\varepsilon}\right).
\end{align*}
For $x\in [0,1]$ such that $\mathcal{V}(x) = \mathcal{V}\,'(x) = 0$ then obviously $f'_\varepsilon(x) = 0$, while if $x\in [0,1]$ such that $\mathcal{V}(x)\neq 0$ then from \eqref{lemA1} and Lemma \ref{key lemma} we have
\begin{equation*}
\qquad\;\;|f'_\varepsilon(x)|\leq \left|\frac{\mathcal{V}\,'(x)}{\sqrt{\mathcal{V}(x)}}\right|. \left|\frac{H''(\xi)}{H'(\xi)}\sqrt{H(\xi)} \right| \leq L\left(\sup_{\left[0,p^*\right]}\left|\frac{H''(p)}{H'(p)}\sqrt{H(p)}\right|\right)<\infty
\end{equation*}
where $\xi = H^{-1}(\mathcal{V}(x)+\varepsilon)$ and $p^* = H^{-1}(\Vert V\Vert_{L^\infty}+1)$. Therefore $f_\varepsilon$ is Lipschitz on $[0,1]$ with a Lipschitz constant independent of $\varepsilon>0$. Let $\varepsilon\rightarrow 0$ we deduce that $x\mapsto G(\mathcal{V}(x))$ is Lipschitz on $[0,1]$.
\item[(ii)] If $H''(0) > 0$ then there exists $\delta>0$ so that $H''(p)\geq c>0$ for $p\in (-\delta,\delta)$, thus there are some $m,M>0$ such that
\begin{equation}\label{ijk}
m|p|^2 \leq H(p) \leq M|p|^2 \qquad\text{and}\qquad m|p|\leq |H'(p)| \leq M|p|.
\end{equation}
From that \eqref{lemA1_pt1} follows. On the other hand, since $G_i(x) = H'(H_i^{-1}(x))$ for $i=1,2$ and \eqref{ijk} we deduce that for all $x$ small then
\begin{equation}\label{aaa1}
\sqrt{\frac{m}{M}}\sqrt{x}\leq |H'(H_i^{-1}(x))| \leq \sqrt{\frac{M}{m}}\sqrt{x}.
\end{equation}
Since $G_i(x)  = 0 $ if and only if $x=0$, we have \eqref{aaa1} is true for any bounded set of $\mathbb{R}$ after modifying the upper bound and lower bound.
\item[(iii)] Using the convexity of $H$ we have $H(p) \leq p H'(p)$ for all $p$, hence
\begin{equation}\label{3-stars}
\left|\frac{H''(p)}{H'(p)}\sqrt{H(p)}\right| \leq \frac{H''(p)}{\sqrt{|H'(p)|}}\sqrt{|p|}.
\end{equation}
Let $g(p) = H'(p)\in \mathrm{C}^2(\mathbb{R})$ is strictly increasing on $(0,\infty)$ and is strictly decreasing on $(-\infty,0)$ with $g(0) = 0$, we claim that indeed
\begin{equation}\label{4-stars}
\limsup_{p\rightarrow 0}
\frac{g'(p)}{\sqrt{|g(p)|}}  < \infty.
\end{equation}
This can be done by a similar argument to Lemma \ref{key lemma}, hence \eqref{lemA1_pt3} follows.
\item[(iv)] It is clear from direct computation.
\end{itemize}
\end{proof}

\section{Acknowledgments}
The author would like to express his appreciation to his advisor, Hung V. Tran for giving him this interesting problem and for his invaluable guidance. The author would like to thanks the referees for invaluable comments and suggestions, which help much in vastly improving the presentation of the paper. The author is grateful to thanks Jingrui Cheng and Ilyas Khan for many useful suggestions, and Truong-Son Van for introducing the reference \cite{Neukamm2017}. Finally, the author also would like to thank Sigurd Angenent, Jean-Luc Thiffeault, Joseph Jepson, Minh-Binh Tran and Michel Alexis for helpful discussions and supports.


\bibliography{zzzzlibrary}{}
\bibliographystyle{acm}

\end{document}